\documentclass[a4paper,12pt,reqno]{amsart}
\usepackage{amsfonts, amsthm, amsmath, amssymb,bbold,pdfpages, pgfplots}
\pgfplotsset{width=11cm, height=8cm, compat=1.5}
\renewcommand{\b}[1]{\mathbf{#1}}
\newcommand{\cl}[1]{\mathcal{#1}}
\newcommand{\card}{\#}
\newcommand{\z}{\mathbf{z}}
\newcommand{\x}{\mathbf{x}}
\newcommand{\y}{\mathbf{y}}
\renewcommand{\u}{\mathbf{u}}
\newcommand{\bfb}{\mathbf{b}}
\newcommand{\dL}{\mathrm{d}}
\newcommand{\bfc}{\mathbf{c}}
\newcommand{\bfa}{\mathbf{a}}
\newcommand{\bfd}{\mathbf{d}}
\newcommand{\bfe}{\mathbf{e}}
\newcommand{\bff}{\mathbf{f}}
\newcommand{\one}{\mathbb{1}}

\DeclareMathOperator{\Adj}{Adj}
\DeclareMathOperator{\Aut}{Aut}
\newcommand{\Z}{\mathbb{Z}}
\newcommand{\Zprim}{\mathbb{Z}_{\mathrm{prim}}}
\newcommand{\N}{\mathbb{N}}
\newcommand{\R}{\mathbb{R}}

\newcommand{\twosum}[2]{\sum_{\substack{#1\\#2}}}

\newcommand{\beql}[1]{\begin{equation}\label{#1}}
\newcommand{\eeq}{\end{equation}}

\newcommand{\modd}[1]{\; ( \text{mod} \; #1)}
\newtheorem{theorem}{Theorem}
\newtheorem{lemma}{Lemma}

\begin{document}

\title{The Distribution of Rational Points on Conics} 
\author{D.R.\ Heath-Brown}

\begin{abstract}
We examine the counting function for rational points on conics, and show how the point where the asymptotic behaviour begins depends on the size of the smallest zero.

\end{abstract}

\address{Mathematical Institute\\
Radcliffe Observatory Quarter\\ Woodstock Road\\ Oxford\\ OX2 6GG}
\email{rhb@maths.ox.ac.uk}

\subjclass[2010]{11D45 (11D09 11E08 14G05)}

\maketitle

\date{}

\section{Introduction}

Let $q(x_1,x_2,x_3)=q(\x)\in\Z[x_1,x_2,x_3]$ be a quadratic form, and
write
\[\Zprim^n=\{(x_1,\ldots,x_n)\in\Z^n-\{\mathbf{0}\}:
\,\mathrm{g.c.d.}(x_1,\ldots,x_n)=1\}.\]
The
purpose of this paper is to investigate the behaviour as
$B\rightarrow\infty$ of the counting function
\[N(B)=N(B;q)=
\card\{\x\in\Zprim^3:\,q(\x)=0,\,\max\{|x_1|,|x_2|,|x_3|\}\le B\},\]
and of its weighted form
\[N(B,w)=N(B,w;q)=\sum_{\substack{\x\in\Zprim^3\\ q(\x)=0}}w(B^{-1}\x).\]
Here we take $w:\R^3\to\R$ to be infinitely differentiable, with
compact support.  With this notation, the conic $q=0$ has
$\tfrac12 N(B)$ rational points of height at most $B$.

Provided $q$ is isotropic over $\mathbb{Q}$ (in other words, if
$q(\x)=0$ has at least one non-zero integral solution), one has
\[N(B)\sim \tfrac12\sigma_{\infty}\mathfrak{S}(q)B\;\;\;
\mbox{as} \;\;B\to\infty\]
where $\sigma_{\infty}>0$ is the real density of solutions, and
$\mathfrak{S}(q)>0$ may be given
explicitly in terms of the usual product of local densities. (The
factor $\tfrac12$ is the ``alpha constant" in Peyre's terminology \cite{Peyre})
Indeed one has
\beql{nbb}
N(B,w)=\tfrac12\sigma_{\infty}(q;w)\mathfrak{S}(q)B
+O_{q,w}(B\exp\{-c\sqrt{\log B}\}),
\eeq
for some absolute constant $c>0$.   These
results follow from work of the author \cite[Corollary 2]{circle}
We stress that the error term in (\ref{nbb}) contains an unspecified
dependence on
$q$. Our main aim in this paper is to obtain a good explicit
dependence, so as to show how large $B$ has to be, in terms of $q$,
before one sees the true asymptotics for $N(B)$.

In order to see the phenomena that $N(B)$ can display we present a
numerical example.  Let $q_0$ be the form
\beql{q0d}
q_0(\x)=-61x_1^2-22x_1x_3-38x_2^2+99x_2x_3+39x_3^2.
\eeq
Then the following graph shows values of $N(B;q_0)$ for $B\le10000$.
10000.

\begin{tikzpicture}
\begin{axis}[
axis lines = left,
xlabel={$B$ in 1000's},
ylabel={$N(B;q_0)$}, 
ymin=0, ymax=300, xmin=0, xmax=10,
 xtick={0,2,4,6,8,10},
  ytick={0,50,100,150,200,250,300},
]
  \addplot+[only marks, black, mark=*, mark size=1pt,]
  coordinates { 
    (0,0) (0.5,8) (1,15) (1.5,22) (2,30) (2.5,37) (3,40) (3.5,52)  (4,78)
    (4.5,97) (5,112) (5.5,126)  (6,141) (6.5,159)
  (7,175) (7.5,189) (8, 207) (8.5,219) (9,235) (9.5,248) (10,262) 
  };
\end{axis}
\end{tikzpicture}

The graph appears linear from about $B=6000$ onwards, but there is a
surprising kink around $B=3500$.  Indeed for $B\le 2500$ the graph
seems linear, but with a smaller gradient than for the range
$B\ge 6000$.  It is this strange behaviour that we aim to explain ---
see the discussion after theorem \ref{t4}.
\bigskip

We begin by introducing some notation and terminology. In general we
will want to allow our form $q$ to have odd cross-terms. We
therefore write it in the asymmetric shape
\[q(\x)=\sum_{1\le i\le j\le 3}q_{ij}x_ix_j,\]
associate with $q$ the matrix
\beql{QD}
Q=\left(\begin{array}{rrr} 2q_{11} &q_{12} &q_{13}\\
  q_{12} & 2q_{22} &q_{23} \\ q_{13} &q_{12} & 2q_{33} \end{array}\right).
  \eeq
Moreover, we define the determinant, somewhat
unconventionally, by
\[\Delta=\Delta(q)=\tfrac{1}{2}\det(Q).\]
Thus $\Delta\in\Z$ for any integral form, and
\[\Delta\big(q(M\x)\big)=\det(M)^2\Delta(q)\]
for any $3\times 3$ matrix $M$. By changing the sign of $q$ if necessary we can
arrange that $\Delta(q)\ge 0$.  We
recall that $q$ is said to be primitive if the coefficients $q_{ij}$
have no common factor.
With this notation our first result is the following.
\begin{theorem}\label{t1}
Let $q$ be a primitive integral isotropic form
  with $\Delta>0$.
Then there is a positive integer $K\le\tau(\Delta)$, and there are
nonsingular $3\times 3$ integer matrices $M_1,\ldots,M_K$  having the
following properties.
\begin{enumerate}
\item[(i)] If $\Delta$ is square-free then $K=\tau(\Delta)$.
  \item[(ii)] The determinant $\det(M_k)$ is a positive divisor of $\Delta$.
\item[(iii)] For any
primitive integral solution $\x$ of the equation $q(\x)=0$, there is a
unique index $k$ such that $\x\in M_k(\Z^3)$.
\item[(iv)]
For each $k$ there is a corresponding $D_k\in\N$ such that
\beql{fun}
q(M_k\x)=D_k(x_1x_3-x_2^2),
\eeq
identically in $\x$.
\item[(v)] 
  We have
\beql{MD}
\Delta\det(M_k)^2=D_k^3,
\eeq
so that $D_k\mid \Delta$ and $\det(M_k)\mid D_k$. Moreover
$D_k\mid\det(M_k)^2$.
\item[(vi)] A prime $p$ can divide $\Delta\det(M_k)^{-1}$ only if
  $v_p(\Delta)\ge 4$.
  \item[(vii)] If $\Delta$ is cube-free then for every index
    $k\le\tau(\Delta)$ the set $M_k(\Z^3)$ contains a primitive zero
    of $q$.
\end{enumerate}
\end{theorem}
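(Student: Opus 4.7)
My plan is a local-to-global construction: first produce a single rational parametrization of the conic, then refine it prime by prime to assemble the list $M_1,\dots,M_K$. The starting point uses only that $q$ is isotropic over $\Q$: the conic $q=0$ is $\Q$-isomorphic to $\mathbb{P}^1$, hence $q$ is $\mathrm{GL}_3(\Q)$-equivalent to $x_1x_3-x_2^2$ up to a rational scalar, and clearing denominators produces at least one integer matrix $M^*$ and $D^*\in\N$ with $q(M^*\x)=D^*(x_1x_3-x_2^2)$. Applying the identity $\Delta(q\circ M)=\det(M)^2\Delta(q)$ to any such equation gives $D^3=\Delta\det(M)^2$, which is precisely \eqref{MD}. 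Primitivity of $q$ forces $v_p(D)=0$ whenever $M\in\mathrm{GL}_3(\Z_p)$, hence $v_p(\Delta)=0$ in that case; rearranged, this says $v_p(\det M)\ge 1$ whenever $v_p(\Delta)\ge 1$, which together with the congruence $v_p(\Delta)+2v_p(\det M)\equiv 0\pmod 3$ already yields properties (v) and (vi).

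The core step is a local classification. At each prime $p$ I would enumerate the $\Z_p$-sublattices $L_p=M(\Z_p^3)\subseteq\Z_p^3$ that arise from some nonsingular $M\in M_3(\Z_p)$ with $q(M\x)=D(x_1x_3-x_2^2)$ for some $D\in\Z_p\setminus\{0\}$, and that meet the primitive $\Z_p$-zero locus of $q$. For $p\nmid\Delta$ one shows that $L_p=\Z_p^3$ is the only such choice, using that a primitive zero has a unit coordinate and that $q$ is $\Z_p$-equivalent to a unit times $x_1x_3-x_2^2$. For $p\mid\Delta$ one puts $q$ in $p$-adic Jordan normal form and enumerates the lattices directly; the main local claim is that there are at most $v_p(\Delta)+1$ of them, that each primitive $\Z_p$-zero of $q$ lies in exactly one, and that when $v_p(\Delta)\le 1$ (resp.\ $v_p(\Delta)\le 2$) the count is saturated and every $L_p$ actually contains a primitive $\Z_p$-zero, as needed for (i) (resp.\ (vii)).

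Global assembly is then routine. Each tuple $(L_p)_{p\mid\Delta}$ of local sublattices determines a global sublattice $L_k\subseteq\Z^3$ via $L_k=\bigcap_p (L_p\cap\Z^3)$, and any integer basis of $L_k$ yields a matrix $M_k$ satisfying $q(M_k\x)=D_k(x_1x_3-x_2^2)$, so property (iv) is built in. The bound $K\le\prod_{p\mid\Delta}(v_p(\Delta)+1)=\tau(\Delta)$, together with property (iii) that every primitive integral zero of $q$ lies in exactly one $M_k(\Z^3)$, follows by multiplying the local assertions via the Chinese remainder theorem; (ii) follows from the local determinant computation, and (i) and (vii) come from combining the local saturation statements across primes (either with multiplicity one or at most two).

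The chief obstacle lies in the local classification at primes $p\mid\Delta$, and especially at $p=2$: the Jordan decomposition of $q$ need not be diagonal there, and binary hyperbolic and anisotropic norm-form blocks must be handled separately. In each case one needs both the sharp upper bound $v_p(\Delta)+1$ on the number of sublattices and the production of an explicit primitive $\Z_p$-zero inside each $L_p$ when $v_p(\Delta)\le 2$; the latter is essentially a Hensel lift from a solution modulo a bounded power of $p$, but the book-keeping — particularly excluding lattices that superficially satisfy the discriminant congruence but fail to contain a primitive zero — is where the computation will be most delicate.
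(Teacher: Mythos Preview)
Your overall strategy---a local analysis at each prime followed by a CRT-style assembly---is indeed the skeleton of the paper's proof, and the paper's Lemmas~2 and~3 carry out precisely the local step you describe. However, there are two concrete gaps in your plan as written.

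First, your local enumeration is too permissive. You propose to list all sublattices $L_p = M(\Z_p^3)$ for which $q(M\x) = D\,J(\x)$ holds with some $D$, and which meet the primitive zero locus. But already for $p\nmid\Delta$ this fails: taking $q = J$ and $M = \mathrm{diag}(p^2, p, 1)$ gives $J(M\x) = p^2 J(\x)$, and $M(\Z_p^3)$ contains the primitive zero $(0,0,1)$. So $L_p = \Z_p^3$ is \emph{not} the only lattice meeting your stated conditions, and in fact there are infinitely many. What singles out the correct finite list is an index bound $[\Z_p^3 : L_p] \mid p^{v_p(\Delta)}$---this is exactly property~(ii), which therefore cannot be recovered as a by-product but must be built into the construction. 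The paper does this by induction on $e = v_p(\Delta)$ (Lemma~3): at each stage it passes to an explicit sublattice of index $p$ or $p^2$ determined by the normal form of $q$ modulo~$p$ (Lemma~2), verifies the partition property (iii) directly case by case, and tracks the determinant throughout.

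Second, the claim that ``any integer basis of $L_k$ yields a matrix $M_k$ satisfying $q(M_k\x) = D_k(x_1x_3 - x_2^2)$'' is false. If $M' = M_k U$ with $U \in \mathrm{GL}_3(\Z)$, then $q(M'\x) = D_k\, J(U\x)$, which equals $D_k J(\x)$ only when $U \in \Aut(J)$; a generic basis gives a form merely $\Z$-equivalent to $D_k J$, not equal to it. To pass from ``locally equivalent to a multiple of $J$ at every prime'' to ``globally equal to $D_k J$ in some basis'' you need that $J$ has one class in its genus---equivalently, that every primitive isotropic integral ternary form with $\Delta = 1$ is $\mathrm{SL}_3(\Z)$-equivalent to $J$. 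The paper proves this directly as the base case $\Delta=1$ of its induction, after which the explicit inductive construction of the $R_k$ and $M_k$ avoids any further local-global appeal. Your derivation of~(v) is also incomplete: the divisibility $D_k \mid \det(M_k)^2$ does not follow from $v_p(\det M_k)\ge 1$ and the congruence $e+2f\equiv 0\pmod 3$ alone (try $e=7$, $f=1$); it requires the adjoint identity $\det(M_k)^2\, q(\y) = D_k\, J\!\left(\Adj(M_k)\y\right)$ together with primitivity of $q$, as the paper does.
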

Here $\tau(\ldots)$ is the usual divisor function, and $v_p(\Delta)$
is the $p$-adic valuation. In addition to this
notation we will find it convenient to write $J(\x)$ for the quadratic form
$x_1x_3-x_2^2$, so that $q(M_k\x)=D_kJ(\x)$.

The theorem shows that we can partition the primitive integer zeros of
$q$ into $K$ classes $\mathcal{C}_1,\ldots,\mathcal{C}_K$,
corresponding to the different matrices $M_k$.  Specifically, we
define
\[\cl{C}_k=\{\x\in\Zprim^3\cap M_k(\Z^3):\,q(\x)=0\}.\]
Moreover, since the
primitive integer zeros of $J$ are given exactly twice each by
$\pm(u_1^2,u_1u_2,u_2^2)$ , the theorem shows that we can
produce the primitive integer solutions of  $q(\x)=0$ exactly twice
each as 
\[\x=\pm M_k(u_1^2,u_1u_2,u_2^2).\]
Here $(u_1,u_2)$ 
must be primitive if $\x$
is, but unfortunately it is not true that $\x$ is primitive whenever
$u_1$ and $u_2$ are coprime.

Part (vi) of the theorem shows that if $\Delta$ has no
fourth-power divisors then we have $\det(M_k)=D_k=\Delta$ for every
index $k$.  In what follows, it may help the reader if they first
restrict attention to this simplified case.

Part (vii) of the theorem shows
that if $\Delta$ is cube-free then each of the
classes $\cl{C}_k$ is non-empty.  For other values of $\Delta$
we may discard any values
of $k$ for which $\cl{C}_k$ is empty, without affecting the claims in
the theorem.  Thus we will suppose in what follows that each class
$\cl{C}_k$ is non-empty.

The form $q_0$ given by (\ref{q0d}) has $\Delta(q_0)=977861=p_0$, say,
which is prime.  Hence part (i) of the theorem shows that $K=2$, and
parts (v) and (vi) yield
\[\det(M_1)=D_1=\det(M_2)=D_2=p_0.\]
In fact we may take
\beql{Ms}
M_1=\left(\begin{array}{rrr} 1& -45 & 3426 \\ 0 & 100& 3339 \\ 
-1& -54 & 3047 \end{array}\right) \;\;\mbox{and}\;\;
M_2=\left(\begin{array}{rrr} 39 & -21 & -98 \\ 0 & -100 & -1 \\
61  & 122 & 99 \end{array}\right).
\eeq
Indeed
$q_0(\x)=L_1(\x)L_2(\x)-p_0L_3(\x)^2$ with
\[L_1(\x)=100x_1+99x_2+100x_3,\;\; L_2(\x)=9778x_1+9877x_2+9779x_3,\]
and
\[L_3(\x)=x_1+x_2+x_3,\]
and it turns out that the two classes are
\[\cl{C}_1=\{\x\in\Zprim^3:\, q(\x)=0\;\mbox{and}\; p_0\mid L_1(\x)\}\]
and
\[\cl{C}_2=\{\x\in\Zprim^3:\, q(\x)=0\;\mbox{and}\; p_0\mid L_2(\x)\}.\]
\bigskip

In order to use Theorem \ref{t1} for quantitative results we will need
information on the size of the entries in $M_k$.  However for each $k$
there are infinitely many choices for $M_k$, since the automorphism
group
\[\Aut(J)=\Aut_{\Z}(J)=\{U\in{\rm M}_3(\Z):J(M\x)=J(\x)\}\]
is infinite.  (Note that $\det(U)=\pm 1$ for every $U\in\Aut(J)$.  Thus
$U^{-1}$ will be automatically be integral.) Our next result shows that we can
always make a good choice for $M_k$. We will write $||\x||$ for the
$L^2$-norm of the vector $\x$, and
$||q||$ for the $L^2$-norm of the coefficients of the
    matrix $Q$ of $q$, as given by (\ref{QD}). Specifically, we have
    \[||q||=||Q||=
    \{4q_{11}^2+2q_{12}^2+2q_{13}^2+4q_{22}^2+2q_{23}^2+4q_{33}^2\}^{1/2}.\]
  \begin{theorem}\label{t2}
    In Theorem \ref{t1} we may choose $M_k$ so that if $M_k^{-1}$ has
    rows $\b{r}_1,\b{r}_2,\b{r}_3$, then
    \beql{t2crE}
    ||\b{r}_1||\cdot||\b{r}_3||\le 9D_k^{-1}||q||
    \;\;\;\mbox{and}\;\;\; ||\b{r}_2||^2\le 10D_k^{-1}||q||.
    \eeq
Moreover if $M_k$ has columns $\b{c}_1,\b{c}_2,\b{c}_3$
we will have $||\b{c}_1||\le||\b{c}_3||$,
\beql{t213}
||\b{c}_1||\cdot||\b{c}_3||\le 90 \det(M_k)^2D_k^{-2}||q||^2
\le 90 ||q||^2
\eeq
and
\beql{t22}
||\b{c}_2||\le 9\det(M_k)D_k^{-1}||q|| \le 9||q||.
\eeq
   \end{theorem}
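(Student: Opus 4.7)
The freedom in choosing $M_k$ is right-multiplication $M_k\to M_kU$ by $U\in\Aut(J)$. The plan is to exploit the subgroup $\mathrm{GL}_2(\Z)\hookrightarrow\Aut(J)$ induced by the Veronese parametrization $(u_1,u_2)\mapsto(u_1^2,u_1u_2,u_2^2)$ of the primitive integer zeros of $J$. With $L_i(\x)=\b{r}_i\cdot\x$, the defining identity $q(\x)=D_k(L_1(\x)L_3(\x)-L_2(\x)^2)$ becomes
\[
Q=D_k\bigl(\b{r}_1^T\b{r}_3+\b{r}_3^T\b{r}_1-2\b{r}_2^T\b{r}_2\bigr),
\]
and taking the squared Frobenius norm of both sides (noting $||q||^2=||Q||_F^2$) gives the key identity
\[
\frac{||q||^2}{D_k^2}=2||\b{r}_1||^2||\b{r}_3||^2+2(\b{r}_1\cdot\b{r}_3)^2+4||\b{r}_2||^4-8(\b{r}_1\cdot\b{r}_2)(\b{r}_2\cdot\b{r}_3).
\]
Every bound in (\ref{t2crE}) will come from this one identity, once the $\b{r}_i$ are reduced enough to control the cross-terms on the right.

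The reduction has two parts. First, the $\mathrm{GL}_2(\Z)$ action permutes the family $\rho(t,s)=t^2\b{r}_1+2ts\,\b{r}_2+s^2\b{r}_3$ indexed by primitive $(t,s)\in\Z^2$, so I choose the orbit representative of $M_k$ so that $\b{r}_1=\rho(1,0)$ has minimal norm in this family; in particular, comparing with $\rho(0,1)=\b{r}_3$ gives $||\b{r}_1||\le||\b{r}_3||$. Second, the upper-triangular shear corresponding to $\sigma=\begin{pmatrix}1&0\\t&1\end{pmatrix}$ preserves $\b{r}_1$ and replaces $\b{r}_2$ by $\b{r}_2-t\b{r}_1$, so choosing the integer $t$ appropriately forces $|\b{r}_1\cdot\b{r}_2|\le\tfrac12||\b{r}_1||^2$. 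The minimality of $||\b{r}_1||$ compared against $||\rho(1,\pm1)||$ and $||\rho(2,\pm1)||$ provides the additional quadratic inequalities needed to bound $\b{r}_1\cdot\b{r}_3$ and $\b{r}_2\cdot\b{r}_3$ in terms of $||\b{r}_1||^2,||\b{r}_2||^2,||\b{r}_3||^2$. Feeding all this into the key identity and isolating the dominant positive terms $2||\b{r}_1||^2||\b{r}_3||^2$ and $4||\b{r}_2||^4$ produces the row bounds (\ref{t2crE}).

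The column bounds then follow formally. Writing $M_k^{-1}=(\b{r}_1,\b{r}_2,\b{r}_3)^T$, the adjugate formula gives
\[
\b{c}_1=\det(M_k)\,\b{r}_2\times\b{r}_3,\;\;\b{c}_2=\det(M_k)\,\b{r}_3\times\b{r}_1,\;\;\b{c}_3=\det(M_k)\,\b{r}_1\times\b{r}_2,
\]
and $||\b{a}\times\b{b}||\le||\b{a}||\cdot||\b{b}||$ yields $||\b{c}_1||\cdot||\b{c}_3||\le\det(M_k)^2\,||\b{r}_1||\cdot||\b{r}_2||^2\cdot||\b{r}_3||$. Combining with (\ref{t2crE}) gives the first half of (\ref{t213}), and an analogous computation gives (\ref{t22}). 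The second halves use $\det(M_k)^2/D_k^2\le 1$, immediate from $\det(M_k)\mid D_k$ in Theorem \ref{t1}(v). The ordering $||\b{c}_1||\le||\b{c}_3||$ can be forced by a final swap $\sigma=\begin{pmatrix}0&1\\1&0\end{pmatrix}$, which merely interchanges $\b{r}_1\leftrightarrow\b{r}_3$ and $\b{c}_1\leftrightarrow\b{c}_3$, preserving all the symmetric bounds already established.

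The principal obstacle is the middle step. The cross-term $-8(\b{r}_1\cdot\b{r}_2)(\b{r}_2\cdot\b{r}_3)$ in the key identity has no definite sign, and unconstrained it can be as large as $8\,||\b{r}_1||\,||\b{r}_2||^2\,||\b{r}_3||$, which is exactly the order of the positive terms we want to extract; without the reduction the identity is consistent with the $||\b{r}_i||$ being arbitrarily large. Coordinating enough minimality inequalities to beat the cross-term down while preserving the explicit constants $9$ and $10$ is where the calculational work will lie.
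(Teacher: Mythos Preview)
Your setup is correct: the identity $Q=D_k(\b{r}_1^T\b{r}_3+\b{r}_3^T\b{r}_1-2\b{r}_2^T\b{r}_2)$ does yield the Frobenius relation you state, the embedding $\mathrm{GL}_2(\Z)\hookrightarrow\Aut(J)$ via the symmetric square is valid (its image together with $-I$ is all of $\Aut(J)$, so nothing is lost by restricting to it), and the passage from the row bounds (\ref{t2crE}) to the column bounds (\ref{t213}), (\ref{t22}) via $\b{c}_i=\det(M_k)\,\b{r}_j\times\b{r}_k$ is exactly what the paper does in Section~\ref{pt2}.

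The paper's route to the row bounds themselves is genuinely different, however. It factors through a separate statement (Theorem~\ref{t3}) about any integer matrix $A$ with $J(A\x)=q(\x)$, applied with $A=\Adj(M_k)=\det(M_k)M_k^{-1}$. Rather than one Frobenius identity, the argument uses \emph{pointwise} bounds $|q(\x)|\le\tfrac12\|q\|\,\|\x\|^2$ at specially chosen vectors, combined with an orthogonal decomposition $\bfa=\lambda\bfc+\bfd$, $\bfb=\mu\bfc+\bfe$ (with $\bfd,\bfe\perp\bfc$) of the rows of $A$ (Lemma~\ref{ald}). Evaluating $q$ at $\bfe$, $\bfc$, $\bfb$ and $\bff=\bfd-2\mu\bfe$ gives four separate inequalities controlling $\|\bfe\|$, $|\lambda-\mu^2|$, $\|\bfd-2\mu\bfe\|$ and $|\lambda|$, and the minimization target is the \emph{product} $\|\bfa\|\cdot\|\bfc\|$ rather than a single row norm. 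Comparing this minimum against one explicit shear then closes the argument with the constants $9$ and $10$.

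Your Frobenius identity packs all of this information into one equation, and the sign-indefinite cross-term $-8(\b{r}_1\cdot\b{r}_2)(\b{r}_2\cdot\b{r}_3)$ you isolate is precisely the obstruction. The reduction inequalities from $\rho(1,\pm1)$, $\rho(2,\pm1)$ and the shear bound $|\b{r}_1\cdot\b{r}_2|\le\tfrac12\|\b{r}_1\|^2$ may well suffice, but you have not actually carried the estimate through, and there is no evident mechanism by which the specific constants $9$ and $10$ emerge: the paper's pointwise method decouples the pieces so that each constant drops out cleanly, whereas your single identity forces the positive terms $2\|\b{r}_1\|^2\|\b{r}_3\|^2$ and $4\|\b{r}_2\|^4$ to be traded against one another. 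So the proposal is a reasonable alternative strategy whose decisive step remains unproved.
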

The constants can certainly be improved, but for our purposes it
suffices to know that there is at least one set of numerical values that is
valid. From now on we will assume that the matrices $M_k$ are as
described in Theorem \ref{t2}. For the form
(\ref{q0d}) we have
\[||q_0||=\sqrt{39872}=199.679\ldots,\]
and one sees that the columns of the matrices (\ref{Ms}) amply fulfil the
conditions above.

  Since $q(\b{c}_1)=q(\b{c}_3)=0$, we find that for any isotropic form $q$
  and any index $k$ there are two linearly independent zeros
  $\b{c}_1,\b{c}_3\in M_k(\Z^3)$ with
  $||\b{c}_1||\cdot||\b{c}_3||\le 90||q||^2$; in particular there is at least
  one vector in $\cl{C}_k$ of length at most $10||q||$. (The reader
  should note that $\b{c}_1$ and $\b{c}_3$ need not be primitive, while
$\cl{C}_k$ is defined as the set of primitive zeros in $M_k(\Z^3)$.)
  Thus Theorem \ref{t2} recovers (in the case of ternary forms) the
  results of Davenport \cite[Theorem 1]{Dav} and Cassels \cite{Cas},
  which were weaker in as much as they referred only to the complete
  set of zeros of $q$, rather than individual classes $\cl{C}_k$. 

Our next result, which is rather easy, explains how $\b{c}_1$ and
$\b{c}_3$ are related to the smallest and second smallest zeros of $q$
in $\cl{C}_k$.  It is phrased in terms of the quantity
\[\rho=\rho(q)=\frac{||q||^3}{\Delta},\]
which we will refer to as the ``aspect ratio" of $q$.  We will see that
$\rho\ge 2$ in all cases, and we may expect that $\rho(q)\approx 1$ for
``typical" forms $q$.  We therefore think of forms with large aspect
ratio as having untypically small determinant. For the form
(\ref{q0d}) we have $\rho(q_0)=8.141\ldots$.

\begin{theorem}\label{t5}
We have $\rho(q)\ge 2$ for any $q$.

Let $\z_1$ be an element of $\cl{C}_k$ of minimal length, and let $\z_2$
be an element of $\cl{C}_k$ of minimal length subject to the condition
that $\z_2\not=\pm\z_1$. Then
\beql{z1z2}
||\z_1||\cdot||\z_2||\ge D_k/||q||.
\eeq
If
\[||\b{c}_1||< \rho^{-1/2}\det(M_k)D_k^{-1}||q||,\]
then $\b{c}_1$ must be a scalar multiple of the shortest vector $\z_1$.
In general
\[||\b{c}_1||\le 90\rho||\z_1||\;\;\;\mbox{and}\;\;\;
||\b{c}_3||\le 90\rho||\z_2||.\]
  \end{theorem}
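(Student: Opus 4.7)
The first claim, $\rho(q) \geq 2$, follows at once from the weak form of Hadamard's inequality applied to $Q$: each row satisfies $\|r_i\| \leq \|Q\|_F = \|q\|$, so $|\det Q| \leq \|q\|^3$, and since $\det Q = 2\Delta$ this gives $\rho \geq 2$. (The sharper AM--GM variant would yield $\rho \geq 6\sqrt{3}$, but $\rho \geq 2$ is all we need.)

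The tool for the remaining parts will be the symmetric bilinear form $B(\x, \y) = \x^T Q \y$, which satisfies $B(\x, \x) = 2q(\x)$ and, by Cauchy--Schwarz combined with $\|Q\|_{\mathrm{op}} \leq \|Q\|_F$, the upper bound $|B(\x, \y)| \leq \|q\| \cdot \|\x\| \cdot \|\y\|$. Re-reading Theorem \ref{t1}(iv) symmetrically, $M_k^T Q M_k = D_k J_M$, where $J_M$ is the matrix of $J$. Parametrising integer zeros of $J$ in the standard way as $\u = \epsilon d(a^2, ab, b^2)$ with $\gcd(a,b) = 1$, $d \in \N$, $\epsilon \in \{\pm 1\}$, a direct computation yields the key identity
\[
B(M_k \u, M_k \u') \;=\; D_k \, \epsilon \epsilon' d d' (ab' - a'b)^2.
\]

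For (\ref{z1z2}), writing $\z_i = M_k \u_i$ in this parametrisation: since $\z_1$ and $\z_2$ are primitive with $\z_1 \neq \pm \z_2$ they cannot be parallel, so the projective points $(a_1 : b_1)$ and $(a_2 : b_2)$ differ, giving $(a_1 b_2 - a_2 b_1)^2 \geq 1$ and hence $|B(\z_1, \z_2)| \geq D_k$; comparison with the upper bound produces (\ref{z1z2}). For the ``small $\b{c}_1$'' statement I would argue contrapositively. Since $\b{c}_1 = M_k e_1$ corresponds to $(a, b) = (1, 0)$, if $\b{c}_1 \not\parallel \z_1$ then $b_1 \neq 0$ in the parametrisation of $\z_1$, so $|B(\b{c}_1, \z_1)| = D_k d_1 b_1^2 \geq D_k$. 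Combining with $\|\z_1\| \leq \|\b{c}_1\|$, which holds once $\b{c}_1$ is primitive (so $\b{c}_1 \in \cl{C}_k$), yields $\|\b{c}_1\|^2 \geq D_k/\|q\|$; and a short algebraic manipulation using $D_k^3 = \Delta \det(M_k)^2$ together with $\rho = \|q\|^3/\Delta$ identifies the threshold $\rho^{-1/2} \det(M_k) D_k^{-1} \|q\|$ with exactly $\sqrt{D_k/\|q\|}$, contradicting the hypothesis.

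For the last pair of inequalities, Theorem \ref{t2} gives $\|\b{c}_1\| \cdot \|\b{c}_3\| \leq 90 \det(M_k)^2 D_k^{-2} \|q\|^2$, which via $D_k^3 = \Delta \det(M_k)^2$ simplifies to $90 \rho \cdot D_k/\|q\|$, and then by (\ref{z1z2}) to at most $90 \rho \|\z_1\| \|\z_2\|$. Since $\b{c}_1$ and $\b{c}_3$ are linearly independent at most one of them can be parallel to $\z_1$, so the other lies in $\cl{C}_k \setminus \{\pm \z_1\}$ and hence has length at least $\|\z_2\|$; combined with $\|\z_1\| \leq \|\b{c}_1\|$, a brief case split extracts the two individual bounds $\|\b{c}_1\| \leq 90 \rho \|\z_1\|$ and $\|\b{c}_3\| \leq 90 \rho \|\z_2\|$. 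The main technical hurdle throughout is the possibility that $\b{c}_1$ or $\b{c}_3$ fails to be primitive: such a column is then not itself in $\cl{C}_k$, and the comparisons with $\z_1, \z_2$ must be run through its primitive part with slightly more care.
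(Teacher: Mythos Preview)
Your argument is correct and matches the paper's essentially line for line: the paper bounds $|\det Q|\le\|q\|^3$ via the eigenvalues of $Q$ rather than Hadamard, and obtains $|\z_1^TQ\z_2|\ge D_k$ by observing that $q(\z_1+\z_2)$ is a nonzero multiple of $D_k$ (conic geometry plus the divisibility from (\ref{fun})) rather than through your explicit parametrised computation, but these are purely cosmetic differences. Your flagged concern about the primitivity of $\b{c}_1,\b{c}_3$ is well placed---the paper simply asserts $\|\b{c}_1\|\ge\|\z_1\|$ and $\|\b{c}_3\|\ge\|\z_2\|$ without comment, and the latter does quietly need the small observation that if $\b{c}_3$ happened to be proportional to $\z_1$ then $\b{c}_1$ could not be, so that $\|\b{c}_3\|\ge\|\b{c}_1\|\ge\|\z_2\|$ in that edge case.
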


When $\rho\ll 1$, as we usually expect, we may interpret Theorem
\ref{t5} as saying that the lengths of $\b{c}_1$ and $\b{c}_3$ are
within a constant factor of the shortest possible lengths, namely
$||\z_1||$ and $||\z_2||$.  Moreover, suppose we write $c$ for the
constant $c=(91\rho)^{-1/2}$ and  
 \[\ell=\sqrt{90}\det(M_k)D_k^{-1}||q||\]
for the maximum length for $\b{c}_1$ permitted by (\ref{t213}). Then
whenever $\b{c}_1$ has length at most $c\ell$, the vector $\b{c}_1$
must actually  
be the minimal zero $\z_1$, or a scalar multiple of it. For the form $q_0$,
the first column is a minimal zero, so that any other zero in
$\mathcal{C}_1$ must have length
at least $2^{-1/2}p_0/||q_0||=3462.805\ldots$ The third column of
$M_1$ gives a zero of length $5671.913\ldots$. In contrast, the first
and third columns of $M_2$ give relatively small zeros in $\cl{C}_2$.
  
  In fact Theorem \ref{t2} follows from the following more general
  result.
\begin{theorem}\label{t3}
Suppose $A$ is a $3\times 3$ integer matrix, and that $J(A\x)=q(\x)$.
Then there is a matrix $U\in \Aut(J)$ such that the rows
$\bfa,\bfb,\bfc$ of $UA$ satisfy
\[||\bfa||\cdot||\bfc||\le 9||q||\;\;\;\mbox{and}
\;\;\; ||\bfb||^2\le 10||q||.\]
\end{theorem}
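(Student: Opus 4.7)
The plan is to perform a Gauss-style reduction on the triple of row vectors of $A$, using the identification of $\Aut(J)$ with (essentially) $GL_2(\Z)$ via the symmetric square representation. Writing the rows of $A$ as $\bfa,\bfb,\bfc$, the hypothesis $J(A\x)=q(\x)$ is equivalent to
\[(\bfa\cdot\x)(\bfc\cdot\x)-(\bfb\cdot\x)^2=q(\x),\]
and the matrix identity $A^T Q_J A = Q$ (where $Q_J$ is the matrix of $J$) expands as $Q = \bfa\bfc^T + \bfc\bfa^T - 2\bfb\bfb^T$. Taking Frobenius norms then gives the key identity
\[||q||^2 = 2||\bfa||^2||\bfc||^2 + 2(\bfa\cdot\bfc)^2 + 4||\bfb||^4 - 8(\bfa\cdot\bfb)(\bfb\cdot\bfc),\]
which will let me convert reduction-theoretic inequalities into bounds of the desired form.

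Under the Sym$^2$ action, an element $g\in GL_2(\Z)$ with entries $\alpha,\beta,\gamma,\delta$ sends $\bfa$ to $\alpha^2\bfa+2\alpha\gamma\bfb+\gamma^2\bfc$ (with analogous Sym$^2$ expressions for $\bfb$ and $\bfc$). Two generating moves will drive the reduction: the ``translation'' $T^n\colon(\bfa,\bfb,\bfc)\mapsto(\bfa,\bfb+n\bfa,\bfc+2n\bfb+n^2\bfa)$, which can be used to reduce $|\bfa\cdot\bfb|$ modulo $||\bfa||^2$; and the ``swap'' $S\colon(\bfa,\bfb,\bfc)\mapsto(\bfc,-\bfb,\bfa)$. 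I would iterate these, together with the variant of $T^n$ that fixes $\bfc$, until the triple is reduced in the sense that $||\bfa||\le||\bfc||$, $|\bfa\cdot\bfb|\le\tfrac12||\bfa||^2$, $|\bfb\cdot\bfc|\le\tfrac12||\bfc||^2$, and more generally $||\bfa||^2\le||\alpha^2\bfa+2\alpha\gamma\bfb+\gamma^2\bfc||^2$ for every coprime pair $(\alpha,\gamma)\ne(\pm1,0)$. Termination follows by descent on $||\bfa||^2$, a strictly positive integer.

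Substituting the first two reduction inequalities into the key identity gives
\[||q||^2\ge 2||\bfa||^2||\bfc||^2 + 2(\bfa\cdot\bfc)^2 + 4||\bfb||^4 - 2||\bfa||^2||\bfc||^2 = 2(\bfa\cdot\bfc)^2+4||\bfb||^4,\]
so $||\bfb||^2\le\tfrac12||q||$, which is already much better than the target $10||q||$. The hard part is the product bound $||\bfa||\cdot||\bfc||\le 9||q||$: the reduction conditions used so far control only inner products, not individual norms, and the cross term $-8(\bfa\cdot\bfb)(\bfb\cdot\bfc)$ in the identity can absorb any naive Cauchy--Schwarz estimate applied to the diagonal term. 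To close this gap I would exploit further minimality inequalities for $||\bfa||$ evaluated at carefully chosen coprime pairs $(\alpha,\gamma)$ --- e.g.\ $(0,1)$ yields $||\bfa||\le||\bfc||$, and $(\pm1,1)$ gives $4||\bfb||^2+||\bfc||^2+2(\bfa\cdot\bfc)\pm 4(\bfa\cdot\bfb+\bfb\cdot\bfc)\ge 0$, which bounds $\bfa\cdot\bfc$ from below in terms of $||\bfb||^2$ and $||\bfc||^2$ --- and combine these with the already-established $||\bfb||^2\le\tfrac12||q||$ and the identity to pin down $||\bfa||\cdot||\bfc||$ within an absolute constant multiple of $||q||$. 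The specific constants $9$ and $10$ arise from optimising this system of inequalities; they are not sharp, as the authors explicitly remark.
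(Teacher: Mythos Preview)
Your approach is genuinely different from the paper's, and the Frobenius-norm identity
\[||q||^2 = 2||\bfa||^2||\bfc||^2 + 2(\bfa\cdot\bfc)^2 + 4||\bfb||^4 - 8(\bfa\cdot\bfb)(\bfb\cdot\bfc)\]
is a nice observation. But the argument has two real gaps.

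First, the reduction you describe is not justified. You claim to reach a state with $|\bfa\cdot\bfb|\le\tfrac12||\bfa||^2$, $|\bfb\cdot\bfc|\le\tfrac12||\bfc||^2$, and $||\bfa||$ globally minimal, all at once. But the stabiliser of the first row in $\Aut(J)$ is exactly the one-parameter family $T^n$ (together with a sign on $\bfb$), so once $\bfa$ is fixed you may reduce $\bfa\cdot\bfb$ modulo $||\bfa||^2$ but you have \emph{no further freedom} to adjust $\bfb\cdot\bfc$: the vector $\bfc$ is determined by the choice of $n$. Your termination argument (``descent on $||\bfa||^2$'') does not help here, since $T^n$ fixes $\bfa$ and the variant fixing $\bfc$ can enlarge $||\bfa||$. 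Without the second inner-product bound the cancellation in your identity collapses and the clean inequality $||\bfb||^2\le\tfrac12||q||$ is no longer available.

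Second, and more seriously, the product bound $||\bfa||\cdot||\bfc||\le 9||q||$ --- which is the substance of the theorem --- is not proved at all. You acknowledge this is ``the hard part'' and then gesture at combining minimality at $(\alpha,\gamma)=(\pm1,1)$ with the identity, but carry out no such combination. The difficulty you flag (that the cross term can absorb the diagonal one) is exactly the obstruction, and nothing you write explains how it is overcome.

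The paper proceeds quite differently. It chooses $U$ to minimise $||\bfa||\cdot||\bfc||$ directly over the $\Aut(J)$-orbit, and works with the orthogonal decomposition $\bfa=\lambda\bfc+\bfd$, $\bfb=\mu\bfc+\bfe$ (with $\bfd,\bfe\perp\bfc$). A preparatory lemma, obtained by evaluating $q$ at $\bfe$, at $\bfc$, and computing $\bfc Q(\bfd-2\mu\bfe)^T$, shows that $||\bfe||$, $|\lambda-\mu^2|$ and $||\bfd-2\mu\bfe||$ are all small in terms of $||q||$ and $||\bfc||$. Minimality is then applied to the single automorphism with rows $(1,0,0),(-1,1,0),(1,-2,1)$, yielding $||\bfa-2\bfb+\bfc||\ge||\bfc||$; substituting the decomposition and using the preparatory bounds forces $\mu$ to be small, hence $\lambda$ and $||\bfd||$ are small, and finally $||\bfa||\le 9||q||/||\bfc||$. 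The bound on $||\bfb||^2$ is deduced afterwards from the product bound via $|q(\bfb)|\le\tfrac12||q||\,||\bfb||^2$.
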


We may use the previous theorems to count primitive zeros of $q(\x)$.
Our eventual aim is to give a sharp explicit version of the asymptotic
formula (\ref{nbb}).  We begin by estimating the number of zeros in
each of the classes $\mathcal{C}_k$ with height at most $B$, using the
counting function
\[N(B;\mathcal{C}_k)=\sum_{\x\in\mathcal{C}_k}w(B^{-1}\x).\]
Previously we had said that $:\R^3\to\R$ should be infinitely
differentiable, with compact support.  We shall now be more specific
and require that $w(\x)=0$ whenever $||\x||>1$.
Since it is possible that $w(\x)$ might vanish on
the zero locus of $q$ we introduce a second weight function $w_0(x)$
defined  as
\beql{w0d}
w_0(\x)=\left\{\begin{array}{cc}
\exp\left\{-\frac{1}{1-||\x|^{2}}\right\}, &
||\x||<1,\\ 0, & \mbox{otherwise.}\end{array}\right.
\eeq
This has the properties required for $w$ itself, and its support
includes non-trivial points of the conic $q(\x)=0$. We now define the
real density of points on the conic $q=0$,relative to the weight $w$
by setting
  \beql{sid}
  \sigma_{\infty}(q;w)=\lim_{T\to\infty}\int_{\R^3}w(\y)K_T(q(\y))dy_1dy_2dy_3,
 \eeq
  with
  \beql{KTD}
  K_T(t)=T\max\{1-T|t|\,,\,0\}.
  \eeq
  This coincides with the constant occuring in (\ref{nbb}), see
  Theorem 3 of \cite{circle} which has a mild variant of (\ref{sid}).
  We shall show in Lemma \ref{MT} that the above limit does indeed exist.
The following asymptotic formula for $N(B;\mathcal{C}_k)$ then holds.
\begin{theorem}\label{t4}
For each class $\cl{C}_k$ there is a square-free divisor $\Delta_1\Delta_2$
of $\Delta$ such that $p\mid\Delta_1$ for every prime for which
$p||\Delta$, and such that
  \begin{eqnarray*}
    N(B;\mathcal{C}_k)&=&\frac{\Delta^{1/2}}{2D_k^{1/2}}
    \sigma_{\infty}(q,w)\kappa B\left\{1+O_w\left(\psi(B)
    \left(\frac{||\b{z}_2||}{B}\right)^{1/4}\right)\right\}\\
  &&\hspace{1cm}{}+O_w(1),
  \end{eqnarray*}
  with
  \[\kappa=\frac{6}{\pi^2}\prod_{p\mid \Delta_1}\frac{1}{1+p^{-1}}
\prod_{p\mid \Delta_2}\frac{1-p^{-1}}{1+p^{-1}},\]
  and
 \[\psi(B)=4^{\omega(\Delta)}\rho\,\frac{\sigma_{\infty}(q,w_0)}
         {\sigma_{\infty}(q,w)}\log B.\]
 Here $\z_2$ is the second smallest element of $\cl{C}_k$, as
 described precisely in Theorem \ref{t5}.
\end{theorem}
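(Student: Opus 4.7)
My plan is to parametrize each primitive $\x\in\cl{C}_k$ via Theorem \ref{t1} as $\x=\pm M_k\z_0$ with $\z_0=(u_1^2,u_1u_2,u_2^2)$ and $(u_1,u_2)\in\Z^2$. Since $\gcd(u_1,u_2)^2$ divides every entry of $\z_0$ and hence of $M_k\z_0$, primitivity of $\x$ already forces $(u_1,u_2)$ primitive and is equivalent to the single condition $\gcd(M_k\z_0)=1$. Counting each pair $\pm(u_1,u_2)$ once then gives
\[
N(B;\cl{C}_k)=\tfrac12\sum_{(u_1,u_2)\in\Z^2}\bigl[w(B^{-1}M_k\z_0)+w(-B^{-1}M_k\z_0)\bigr]\one[\gcd(M_k\z_0)=1].
\]
Applying $\one[\gcd=1]=\sum_{d\mid\gcd(M_k\z_0)}\mu(d)$ yields $N(B;\cl{C}_k)=\tfrac12\sum_d\mu(d)S_d(B)$, where $S_d(B)$ sums the weighted integrand over $(u_1,u_2)\in\Z^2$ in one of $\alpha(d)$ residue classes modulo $d$.

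For the main term I rescale $(u_1,u_2)=\sqrt B(v_1,v_2)$: the leading part of $S_d(B)$ is $\alpha(d)d^{-2}\cdot B\int_{\R^2}[w(M_k\Phi(v))+w(-M_k\Phi(v))]\,dv_1dv_2$, with $\Phi(v)=(v_1^2,v_1v_2,v_2^2)$ the Veronese parametrization of $J=0$. A direct Jacobian calculation (giving $d\z=2\,dv_1\,dv_2\,ds$ under $s=J$) identifies the integral as $\sigma_\infty(J;w\circ M_k)$, and the change of variables $\z=M_k^{-1}\y$ in (\ref{sid}), together with $|\det M_k|=D_k^{3/2}\Delta^{-1/2}$, gives $\sigma_\infty(J;w\circ M_k)=\Delta^{1/2}D_k^{-1/2}\sigma_\infty(q,w)$. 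By CRT $\alpha$ is multiplicative, so $\sum_d\mu(d)\alpha(d)/d^2$ factors as the Euler product $\prod_p(1-\alpha(p)/p^2)$.

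I then compute $\alpha(p)$ case by case: (a) for $p\nmid\Delta$, $M_k$ is invertible mod $p$ and $\alpha(p)=1$, giving factor $1-p^{-2}$; (b) for $p\|\Delta$, Theorem \ref{t1}(v) gives $v_p(\det M_k)=v_p(D_k)=1$, and for any kernel direction $\b{v}$ of $M_k\pmod p$ the relation $q(M_k\b{v})=D_kJ(\b{v})\equiv0\pmod{p^2}$ forces $J(\b{v})\equiv0\pmod p$, after which a short count of $(u_1,u_2)\pmod p$ with $(u_1^2,u_1u_2,u_2^2)$ a scalar multiple of $\b{v}$ gives $\alpha(p)=p$ and factor $1-p^{-1}$; (c) for $p^2\mid\Delta$, a finer local analysis of the image of $\Phi$ in $(\Z/p^2)^3$ gives $\alpha(p)\in\{1,p,2p-1\}$. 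Collecting primes with $\alpha(p)=p$ into $\Delta_1$ (which by (b) contains every $p\|\Delta$) and those with $\alpha(p)=2p-1$ into $\Delta_2$ recovers the stated $\kappa$.

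The hard part is the uniform error. The support of the summand in $(u_1,u_2)$-space is an ellipse with semi-axes of order $(B/||\b{c}_1||)^{1/2}$ and $(B/||\b{c}_3||)^{1/2}$; by Theorem \ref{t5} the second is $\asymp(B/||\z_2||)^{1/2}$, up to a factor bounded by $\rho$. Poisson summation in this short direction yields a lattice error in $S_d(B)$ of size (main term)$\cdot(||\z_2||/B)^{1/4}$. Summing against $\mu(d)$: only moduli with all prime factors dividing $\Delta$ contribute (at most $2^{\omega(\Delta)}$ of them); truncation at $d\le\sqrt B$ introduces the $\log B$; and a further $2^{\omega(\Delta)}$ from the Poisson dual-lattice sum yields $4^{\omega(\Delta)}$ in $\psi$. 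The ratio $\sigma_\infty(q,w_0)/\sigma_\infty(q,w)$ enters only when $w$ vanishes on parts of the conic, where the robust weight $w_0$ supplies pointwise bounds. The principal obstacle is extracting the sharp $(||\z_2||/B)^{1/4}$ factor on such an elongated ellipse: a crude Poisson estimate is too weak, and one must exploit the smoothness of $w$ together with the specific Veronese geometry to save the required power.
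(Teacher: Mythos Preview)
Your overall architecture --- parametrize $\cl{C}_k$ by $(u_1,u_2)\mapsto M_k(u_1^2,u_1u_2,u_2^2)$, remove the primitivity condition by M\"obius, identify the main term with $\sigma_\infty(q;w)$ via a Jacobian computation, and read off $\kappa$ from the local densities $\alpha(p)$ --- is exactly the paper's strategy, and your main-term calculation (including the identification $\alpha(p)\in\{1,p,2p-1\}$ according as $p\in\cl{P}_0,\cl{P}_1,\cl{P}_2$) is correct and matches Lemmas~\ref{lprim}, \ref{lSlat} and \ref{MT}.

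The error analysis, however, contains genuine gaps. First, your claim that ``only moduli with all prime factors dividing $\Delta$ contribute (at most $2^{\omega(\Delta)}$ of them)'' is false: for $p\nmid\Delta$ one has $\alpha(p)=1$ corresponding to the condition $p\mid\u$, so the M\"obius sum genuinely runs over all square-free $d$. The paper separates the two kinds of primes: those in $\cl{P}_1\cup\cl{P}_2$ give a finite inclusion--exclusion over at most $2^{\omega(\Delta_1)}3^{\omega(\Delta_2)}\le 3^{\omega(\Delta)}$ sublattices $\Lambda^{(j)}$, while the remaining primes give an infinite sum $\sum_{(d,\Delta_1\Delta_2)=1}\mu(d)$ which is handled by rescaling $\u\mapsto d\u$. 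This infinite sum is what must be truncated, and the $\log B$ comes from the tail, bounded by the crude estimate of Lemma~\ref{CB}. Second, the factor $4^{\omega(\Delta)}$ does not arise from ``a further $2^{\omega(\Delta)}$ from the Poisson dual-lattice sum''; it comes from combining the $3^{\omega(\Delta)}$ lattices with the lower bound $\kappa\gg(3/4)^{\omega(\Delta)}$ when the error is expressed relative to the main term.

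Third, and most seriously, ``Poisson summation in the short direction'' giving directly $(\|\z_2\|/B)^{1/4}$ is not how the exponent $1/4$ arises, and you yourself flag this step as incomplete. The paper applies full two-dimensional Poisson summation (Lemma~\ref{poisson}) and then bounds each term $I(\bfa,M_k,N)$ for $\bfa\neq\mathbf{0}$ by integrating by parts $K$ times, obtaining an error $\ll B D_k^{-1/2}\|q\|^{1/2}(B_0/B)^K$ with $B_0=\rho\,\dL(\Lambda)^2\|\bfc_3\|$ (Lemma~\ref{l:err}). The crucial point is that $B_0$ grows like $d^4\dL(\Lambda^{(j)})^2$, so one cannot sum this over all $d$. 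One introduces a cutoff $d_0$, uses the crude bound for $d>d_0$ and $\dL(\Lambda^{(j)})>d_0$, uses the Poisson asymptotic with $K=2$ for the remaining terms, and \emph{optimizes} $d_0=(B/B_1)^{1/4}$ with $B_1=\rho\|\bfc_3\|$. The exponent $1/4$ is the output of this optimization between the tail error $B/d_0$ and the Poisson error $d_0^{4K-1}(B_1/B)^K$, not a feature of any single Poisson estimate. Finally, $\|\bfc_3\|$ is converted to $\|\z_2\|$ via Theorem~\ref{t5}, and $\sigma_\infty(q,w_0)/\sigma_\infty(q,w)$ enters through the lower bound $\sigma_\infty(q,w_0)\gg\|q\|^{-1}$ of Lemma~\ref{cqE}, used to express the absolute error as a multiple of the main term.
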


A number of comments should be made here.  Firstly, in interpreting
the theorem one should think of the factor $\psi(B)$ as being roughly of
order 1, or more generally as not being too large. We will see in
Lemma \ref{cqE} that $\sigma_{\infty}(q,w)\ll\sigma_{\infty}(q,w_0)$
when $\sup|w|\le 1$. However we have no estimate in the reverse direction
since it is possible that the zero locus of $q$ only just enters the
support of $w$, making $\sigma_{\infty}(q,w)$ small. None the less it
is reasonable to think that $\sigma_{\infty}(q,w_0)\ll\sigma_{\infty}(q,w)$
in most cases of interest.

Viewing $\psi(B)$ as being small we may
interpret the theorem as giving a linear asymptotic formula for 
$N(B;\mathcal{C}_k)$ which takes effect when $B$ is not much larger
than $||\b{z}_2||$. Indeed one can easily show that the error term
$O_w(1)$ is insignificant when $B\ge\rho\,||\b{z}_2||$. Of course when
$B<||\b{z}_2||$ the function $N(B;\mathcal{C}_k)$ counts at most the
zeros $\pm\b{z}_1$.  Thus $N(B;\mathcal{C}_k)$ is $O_w(1)$
from $B=1$ to $B=||\b{z}_2||$, and then begins to display its typical
linear growth.

When $\Delta$ is square-free we have $D_k=\Delta$ for every $k$, by
parts (v) and (vi) of Theorem \ref{t1}.  Moreover we will have
$\Delta_1=\Delta$ and $\Delta_2=1$, so that
\[\kappa=\frac{6}{\pi^2}\prod_{p\mid \Delta}\frac{1}{1+p^{-1}},\]
for each index $k$. Thus when $\Delta$ is square-free the leading
constant in Theorem \ref{t4} is the same for each value of $k$, but
the point at which linear growth begins is potentially different.

We are now in a position to explain the observed kink in our graph
of $N(B)$ for the quadratic $q_0$. The correspondence between $N(B)$
and the counting functions $N(B;\cl{C}_k)$ is not precise since the
former is defined using the condition $||\x||_{\infty}\le B$ while the
latter use $||\x||=||\x||_2$. For the class $\cl{C}_1$ we may take
$\z_1=(1,0,-1)$.  The zero of second smallest sup-norm in $\cl{C}_1$
is $(3426,3339,3047)$ whence $N(B;\cl{C}_1)=2$ for $1\le B<3426$.
However as soon as $B$ is somewhat larger than $3500$ we will have
$N(B;\cl{C}_1)\sim cB$,
for a certain constant $c>0$.  For $\cl{C}_2$ the two zeros of
smallest sup-norm are $(39,0,61)$ and $(-98,-1,99)$ (or
$(-38,-99,38)$, which has the same sup-norm) so that we will have
$N(B;\cl{C}_2)\sim cB$, as soon as $B$ is somewhat larger than a
few hundred, with
the same constant $c$. Thus the initial section of the graph for
$N(B,q_0)$, up to $B=3500$ or so, reflects the range in which
$N(B;\cl{C}_1)=2$ but $N(B;\cl{C}_2)$ is already growing like $cB$,
and the later values of $B$ are in the range where both
$N(B;\cl{C}_1)$ and $N(B;\cl{C}_2)$ are growing like $cB$.

Some remarks on the shape of $\psi(B)$ are also in order. It would be
interesting to know to what extent the various factors involved could
be reduced, or indeed removed. Although this seems possible to some
extent, we hope that the present form of $\psi(B)$ will be sufficient
for applications.

We can produce an asymptotic formula for $N(B,w;q)$ by summing up the
formulae for $N(B;\mathcal{C}_k)$. Since $||\b{z}_2||\le ||\b{c}_3||$
and $||\b{z}_1||\le ||\b{c}_1||$ for each index $k$ the inequality
(\ref{t213}) yields
\[||\b{z}_2||\le ||\b{c}_3||\ll ||q||^2/||\b{c}_1||\le
||q||^2/||\b{z}_1||.\]
Thus Theorem \ref{t4} has the following immediate corollary, in light
of part (i) of Theorem \ref{t1}.
\begin{theorem}\label{cor}
Let $\z_0$ be a non-trivial integer zero of $q$ with $||\z_0||$
minimal.  Then
\begin{eqnarray*}
N(B,w;q)&=&\tfrac12\sigma_{\infty}(q;w)\mathfrak{S}(q)B
\left\{1+O_w\left(\psi(B)
\left(\frac{||q||^2}{||\b{z}_0||B}\right)^{1/4}\right)\right\}\\
&&\hspace{1cm}{}+O_w(\tau(\Delta)),
\end{eqnarray*}
with $\psi(B)$ as in Theorem \ref{t4}.
\end{theorem}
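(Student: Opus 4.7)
The plan is to obtain Theorem \ref{cor} by summing the asymptotic formula of Theorem \ref{t4} over the classes $\cl{C}_1,\ldots,\cl{C}_K$.  By Theorem \ref{t1}(iii) the primitive zeros of $q$ are partitioned by these classes, so
\[
N(B,w;q) \;=\; \sum_{k=1}^{K} N(B;\cl{C}_k),
\]
and each term on the right admits the expansion given by Theorem \ref{t4}.  The strategy therefore divides into three parts: (a) identify the sum of the main terms, (b) produce a uniform bound on the error factors, and (c) control the accumulated $O_w(1)$ terms.

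For (a), the leading contributions sum to
\[
\tfrac12\sigma_{\infty}(q;w)\,B\sum_{k=1}^{K}\frac{\Delta^{1/2}}{D_k^{1/2}}\,\kappa_k.
\]
Since (\ref{nbb}) is already known in the imprecise form of the author's earlier work, the sum $\sum_k \Delta^{1/2}D_k^{-1/2}\kappa_k$ must coincide with the singular series $\mathfrak{S}(q)$; this is the arithmetic heart of the argument and in the square-free case it is essentially direct from part (i) of Theorem \ref{t1}, since then $D_k=\Delta$ and $\kappa_k=(6/\pi^2)\prod_{p\mid\Delta}(1+p^{-1})^{-1}$ is independent of $k$, so one only needs to verify that $K\cdot\Delta^{1/2}\cdot\kappa$ equals the standard product of local densities, which I expect to follow by a direct prime-by-prime check.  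In the general case I would decompose $\mathfrak{S}(q)$ into an Euler product and match the $p$-adic factors against the definition of $\kappa_k$ using parts (v) and (vi) of Theorem \ref{t1}.

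For (b), the error factor for class $k$ involves $||\z_2^{(k)}||$, which we must replace by a quantity independent of $k$.  Here we use the chain of inequalities already highlighted in the text:
\[
||\z_2^{(k)}|| \;\le\; ||\bfc_3^{(k)}|| \;\ll\; \frac{||q||^2}{||\bfc_1^{(k)}||} \;\le\; \frac{||q||^2}{||\z_1^{(k)}||} \;\le\; \frac{||q||^2}{||\z_0||},
\]
where the second inequality uses (\ref{t213}) and the final inequality is just the definition of $\z_0$ as a minimal-length zero.  Inserting this bound into each error factor and pulling it outside the sum converts the remaining sum over $k$ back into $\mathfrak{S}(q)$ via step (a), yielding the error term
\[
O_w\!\left(\tfrac12\sigma_{\infty}(q;w)\mathfrak{S}(q)B\cdot\psi(B)\left(\frac{||q||^2}{||\z_0||B}\right)^{1/4}\right).
\]
For (c), the $O_w(1)$ terms from each class accumulate to $O_w(K)$, which is $O_w(\tau(\Delta))$ by Theorem \ref{t1}.

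The main obstacle is step (a): cleanly identifying $\sum_k \Delta^{1/2}D_k^{-1/2}\kappa_k$ with $\mathfrak{S}(q)$.  The other steps are essentially bookkeeping, using inequalities already established; the arithmetic identity for the leading constant is the part that requires genuine thought, particularly at primes $p$ with $v_p(\Delta)\ge 2$, where the interplay between $\Delta_1$, $\Delta_2$ and the values of $D_k$ across different classes must fit together to reproduce the local factor in $\mathfrak{S}(q)$.
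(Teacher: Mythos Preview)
Your approach matches the paper's: sum Theorem \ref{t4} over the classes, bound each $||\z_2^{(k)}||$ via the chain
\[
||\z_2^{(k)}|| \le ||\bfc_3^{(k)}|| \ll ||q||^2/||\bfc_1^{(k)}|| \le ||q||^2/||\z_1^{(k)}|| \le ||q||^2/||\z_0||,
\]
and bound the number of $O_w(1)$ terms by $K\le\tau(\Delta)$.

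The only divergence is in your treatment of step (a). You describe the identification of $\sum_k \Delta^{1/2}D_k^{-1/2}\kappa_k$ with $\mathfrak{S}(q)$ as ``the main obstacle'' and sketch a prime-by-prime verification. The paper does no such thing: it simply calls the result an immediate corollary, and the identification of the leading constant is obtained by comparison with the already-established asymptotic (\ref{nbb}). You do mention this route in passing (``Since (\ref{nbb}) is already known\ldots''), and that is in fact all that is needed --- two asymptotic formulae for the same quantity must have the same leading coefficient. So your proposal is correct, but the part you flag as the hard step is actually a one-line comparison, not an arithmetic computation.
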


This is the promised improvement of (\ref{nbb}), with a good explicit 
dependence on $q$. It produces a linear asymptotic growth as soon as
$B$ is a little larger than $||q||^2/||\z_0||$.  Since $||\z_0||$ is typically 
of order around $||q||$ this is essentially best possible. We should also 
comment on the quality of the error term, which has a power saving in $B$.
In (\ref{nbb}) there is a saving of order $\exp\{-c\sqrt{\log B}\}$, which 
has its origins in the error term for the Prime Number Theorem.  Thus 
one could replace $\sqrt{\log B}$ in the exponent by some slightly larger 
power of $\log B$, but one
cannot hope to establish (\ref{nbb}) with a power saving in $B$ by the
methods of \cite{circle}.

The reader may compare our work with that of Sofos \cite{sofos}. The latter gives an asymptotic formula for an unweighted counting function, and has an error term which has a better dependence on $B$ (of order $B^{1/2}\log B$) and an explicit dependence on $q$, though a much weaker one.

In future work we plan to apply Theorem \ref{cor} to count rational
points on certain varieties that can be fibred into conics.  Indeed
such applications provide the natural motivation for the present
paper.  In work in preparation (jointly with Dan Loughran) we look at the counting function for Del Pezzo surfaces of degree 5, in the case where there is a conic fibration.
Another example, which we plan to examine in due course, is the variety
$V\in\mathbb{P}^2\times\mathbb{P}^2$ cut out by the equation
\[X_0Y_0^2+X_1Y_1^2+X_2Y_2^2=0,\]
in which a rational point $P$ represented by a pair of primitive integer
vectors $(\x,\y)$ has height $h(P)=||\x||_{\infty}^2||\y||_{\infty}$.
Both these examples require the full strength of Theorem \ref{cor}.

\section{Proof of Theorem \ref{t1}}

We begin with a result that will allow us to work with matrices over
$\Z/mZ$, rather than $\Z$.
\begin{lemma}\label{wa}
  Let $M$ be an $n\times n$ integer matrix, with determinant coprime
  to some positive integer $r$. Then there is a matrix
  $M'\equiv M\modd{r}$ with prime determinant.
  Moreover, if $\det(M)\equiv 1\modd{r}$
  there is an $M''\equiv M\modd{r}$ in $\mathrm{SL}_n(\Z)$.
\end{lemma}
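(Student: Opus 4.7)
The plan is to reduce both claims to the classical surjectivity of the reduction map $\mathrm{SL}_n(\Z)\to\mathrm{SL}_n(\Z/r\Z)$ (valid for all $n\ge 1$, with $n=1$ being trivial), combined with Dirichlet's theorem on primes in arithmetic progressions.

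For the second assertion, the hypothesis $\det(M)\equiv 1\modd{r}$ says precisely that the reduction $\bar M$ of $M$ modulo $r$ lies in $\mathrm{SL}_n(\Z/r\Z)$. Lifting $\bar M$ along the reduction map yields an $M''\in\mathrm{SL}_n(\Z)$ with $M''\equiv M\modd{r}$, as required.

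For the first assertion, I would begin by applying Dirichlet's theorem to the arithmetic progression $\det(M)+r\Z$ --- permissible because $\gcd(\det(M),r)=1$ --- to produce a prime $p\equiv\det(M)\modd{r}$. Setting $M_0=\mathrm{diag}(p,1,\ldots,1)$ (so that $\det(M_0)=p$), the matrix $\bar M\bar M_0^{-1}\in M_n(\Z/r\Z)$ has determinant $\det(M)\cdot p^{-1}\equiv 1\modd{r}$, so it lies in $\mathrm{SL}_n(\Z/r\Z)$. Lifting it to some $S\in\mathrm{SL}_n(\Z)$ by the surjectivity above and setting $M':=SM_0$, one obtains an integer matrix with $\det(M')=p$ prime, which reduces modulo $r$ to $(\bar M\bar M_0^{-1})\cdot\bar M_0=\bar M$. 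Thus $M'$ has the required properties.

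The main obstacle is the surjectivity of $\mathrm{SL}_n(\Z)\to\mathrm{SL}_n(\Z/r\Z)$ itself, though this is entirely classical. I expect to establish it by reducing an arbitrary element of $\mathrm{SL}_n(\Z/r\Z)$ to the identity through elementary transvections $I+tE_{ij}$, each of which lifts trivially to $\mathrm{SL}_n(\Z)$ by choosing any integer representative of $t$. The only mildly delicate step in this reduction is producing a $1$ as an entry at some stage, which is possible because the columns of a matrix in $\mathrm{SL}_n(\Z/r\Z)$ are unimodular modulo $r$ and so admit a $\Z$-linear combination of their entries equal to $1$ modulo $r$.
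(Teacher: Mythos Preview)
Your argument is correct, but it differs from the paper's. The paper proceeds by Smith Normal Form to reduce to the diagonal case, then inducts on $n$: the top-left $(n-1)\times(n-1)$ block is first adjusted (modulo $r$) to have prime determinant, a further Smith Normal Form reduces matters to $\mathrm{Diag}(1,\ldots,1,p,m_n)$, and the $2\times 2$ block $\mathrm{Diag}(p,m_n)$ is handled by an explicit perturbation whose determinant $pm_n+tpr-sr^2$ is made prime (or equal to $1$) via Dirichlet. Your route instead packages everything into the single classical fact that $\mathrm{SL}_n(\Z)\twoheadrightarrow\mathrm{SL}_n(\Z/r\Z)$, and then uses Dirichlet once to pick a prime $p\equiv\det(M)\modd{r}$ so that $\bar M\cdot\mathrm{diag}(p,1,\ldots,1)^{-1}$ lies in $\mathrm{SL}_n(\Z/r\Z)$ and can be lifted. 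The trade-off is clear: the paper's proof is more self-contained but longer, while yours is shorter and more conceptual at the cost of invoking (or, as you sketch, reproving via elementary transvections) the surjectivity of reduction. Both are valid; your version is arguably the cleaner organisation.
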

\begin{proof}
  We can write $M$ in Smith Normal Form as $M=UDV$ with
  $U,V\in\mathrm{SL}_n(\Z)$ and $D$ diagonal.  One then sees that it
  suffices to prove the lemma when $M$ is diagonal, which we do by
  induction on $n$. The case $n=1$ is immediate, by Dirichlet's
  Theorem. If the result is true for matrices of size $n-1$, and
  \[M=\mathrm{Diag}(m_1,\ldots,m_n)=\left(\begin{array}{c|c} M_0 &
      0\\ \hline 0 & m_n\end{array}\right),\]
      say, then $\det(M_0)$ will be coprime to $r$ so that
  $M_0\equiv M_0'\modd{r}$ with $\det(M_0')$ prime. It follows that
      we may write $M_0'$ in Smith Normal Form as $U_0D_0V_0$, with
      $D_0=\mathrm{Diag}(1,\ldots,1,p)$ say. Thus
      $M\equiv M_1\modd{r}$ with
      \[M_1=\left(\begin{array}{c|c} M'_0 &
        0\\ \hline 
        0 & m_n\end{array}\right)=U_1\mathrm{Diag}(1,\ldots,1,p,m_n)V_1,\]
      where
      \[U_1=\left(\begin{array}{c|c} U_0 & 0\\ \hline 0 &
          1\end{array}\right),\]
          and similarly for $V_1$. To complete the induction step it
          remains to show that the lemma holds for the matrix
          $\mathrm{Diag}(p,m_n)$. However
          \[\mathrm{Diag}(p,m_n)\equiv\left(\begin{array}{cc}
              p & sr\\ r & m_n+tr\end{array}\right)\modd{r}\]
              and the matrix on the right has determinant
              $pm_n+tpr-sr^2$. Since $pm_n$ will be coprime to $r$ we
              can make this determinant prime by taking $t=0$ and
              choosing $s$ suitably. Moreover, if $pm_n=1+kr$, we can
              make the determinant equal to 1 by choosing $s$ and $t$
              so that $sr-tp=k$.  This completes the induction
              argument.
              \end{proof}

Our next result describes the reduction of ternary forms modulo
a prime $p$ and its powers.  We do not assume that $p$ is odd.
\begin{lemma}\label{rp}
  Let $p$ be prime and let $q(\x)$ be an integral ternary quadratic
  form, not divisible by $p$ but with
  $p^e||\Delta(q)$ for some exponent $e\ge 1$.
  Then there is a matrix $M\in\mathrm{SL}_3(\Z)$ such that one of the
  following holds.
  \begin{enumerate}
  \item[(i)] $q(M\x)\equiv \kappa x_3^2\modd{p}$, with $p\nmid \kappa$;
  \item[(ii)] $q(M\x)\equiv x_1x_2+\kappa p^ex_3^2\modd{p^{e+1}}$ for some
    integer $\kappa$ coprime to $p$;
 \item[(iii)] $q(M\x)\equiv q_1(x_1,x_2)+\kappa p^ex_3^2\modd{p^{e+1}}$ for some
    integer $\kappa$ coprime to $p$, with $q_1$ irreducible modulo $p$.    
  \end{enumerate}
  \end{lemma}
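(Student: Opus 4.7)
The plan is to classify $q\bmod p$ as a singular plane conic and then promote the resulting $\mathrm{SL}_3(\Z)$-reduction from modulo $p$ to modulo $p^{e+1}$ in cases (ii) and (iii). Since $p\nmid q$ but $p\mid\Delta$, the conic $\{q=0\}\subset\mathbb{P}^2_{\mathbb{F}_p}$ is singular; its geometry falls into exactly one of three types: a double line (case (i)), a pair of distinct $\mathbb{F}_p$-lines (case (ii)), or a pair of Galois-conjugate $\mathbb{F}_{p^2}$-lines (case (iii)). Each of the latter two has a unique $\mathbb{F}_p$-rational singular point.

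In case (i), write $q\equiv\kappa\ell^2\pmod p$ for a primitive integer linear form $\ell$ and take $M\in\mathrm{SL}_3(\Z)$ extending the vector $\ell^T$ into the last column of a unimodular basis: then $\ell(M\x)=x_3$ and $q(M\x)\equiv\kappa x_3^2\pmod p$. In cases (ii) and (iii), pick a primitive integer vector $v$ representing the singular point and a matrix $M_1\in\mathrm{SL}_3(\Z)$ having $v$ as last column. Then $q(M_1\x)\equiv q_1(x_1,x_2)\pmod p$ for some binary form $q_1$ non-degenerate over $\mathbb{F}_p$ (hyperbolic in case (ii), anisotropic in case (iii)), so that
\[q(M_1\x)=\tilde q_1(x_1,x_2)+p\bigl(a\,x_1x_3+b\,x_2x_3+c\,x_3^2\bigr)\]
for some integer lift $\tilde q_1$. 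I then apply the $\mathrm{SL}_3(\Z)$ substitution $x_i\mapsto x_i+p\,s_ix_3$ ($i=1,2$) to kill the cross terms modulo $p^{e+1}$. The required $s_1,s_2$ solve a $2\times 2$ linear system whose determinant is $4\tilde q_{1,11}\tilde q_{1,22}-\tilde q_{1,12}^2\equiv-\mathrm{disc}(q_1)\pmod p$, a unit because $q_1$ is non-degenerate (reducing to $\tilde q_{1,12}^2\equiv 1\pmod 2$ when $p=2$, since the $x_1x_2$-coefficient of $q_1$ is odd in both of its sub-cases over $\mathbb{F}_2$). Hensel-lifting modulo $p^e$ produces an $\mathrm{SL}_3(\Z)$ matrix $M_2$ with
\[q(M_1M_2\x)\equiv\tilde q_1(x_1,x_2)+\mu\,x_3^2\pmod{p^{e+1}},\qquad \mu\in p\Z.\]
Since $\Delta$ is $\mathrm{SL}_3(\Z)$-invariant and $\Delta(\tilde q_1+\mu x_3^2)=-\mathrm{disc}(\tilde q_1)\mu$ with $\mathrm{disc}(\tilde q_1)$ a $p$-adic unit, the hypothesis $v_p(\Delta)=e$ forces $v_p(\mu)=e$; hence $\mu=\kappa p^e$ with $p\nmid\kappa$. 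This already proves case (iii), with $q_1:=\tilde q_1$.

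For case (ii), it remains to replace $\tilde q_1$ by exactly $x_1x_2$ modulo $p^{e+1}$. Since $\tilde q_1\equiv x_1x_2\pmod p$ factors in $\mathbb{F}_p[x_1,x_2]$ into coprime linear factors, Hensel's lemma lifts this to $\tilde q_1=L_1L_2$ in $\Z_p[x_1,x_2]$ with $L_i\equiv x_i\pmod p$; solvability when $p=2$ rests on the parity identity $\mathrm{disc}(\tilde q_1)\equiv 1\pmod 8$, immediate from $\tilde q_1\equiv x_1x_2\pmod 2$. Letting $u\in\Z_p^{*}$ be the determinant of the coefficient matrix of $L_1,L_2$ (so $u\equiv 1\pmod p$), the change of variables $y_1=L_1$, $y_2=L_2$, $y_3=u^{-1}x_3$ has $\Z_p$-determinant one, sends $\tilde q_1$ to $y_1y_2$, and converts $\kappa p^ex_3^2$ into $\kappa u^2p^ey_3^2$ with $p\nmid\kappa u^2$. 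Reducing this transformation modulo $p^{e+1}$ and invoking Lemma~\ref{wa} yields the required element of $\mathrm{SL}_3(\Z)$. The principal obstacle throughout is to realize every transformation inside $\mathrm{SL}_3(\Z)$ rather than merely $\mathrm{GL}_3(\Z_p)$; the determinant-adjustment $y_3\mapsto u^{-1}y_3$ together with repeated appeals to Lemma~\ref{wa} for lifting from $\mathrm{SL}_3(\Z/p^{e+1}\Z)$ to $\mathrm{SL}_3(\Z)$ is precisely what enables this.
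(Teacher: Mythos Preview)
Your argument is correct. The one slip is in case~(i): to get $\ell(M\x)=x_3$ you need $\ell$ to be the third \emph{row of $M^{-1}$}, not the third column of $M$; but this is purely a mis-phrasing and the construction goes through.

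Your route differs from the paper's chiefly for odd $p$. The paper simply diagonalizes $q$ over $\Z_p$ to $Ax_1^2+Bx_2^2+Cx_3^2$; since $p\mid\Delta$ and $p\nmid q$, either two of $A,B,C$ are divisible by $p$ (giving case~(i)), or exactly one is, and then $p^e\|C$ with case~(ii) or (iii) according as $-AB$ is or is not a square mod $p$. Only for $p=2$ does the paper do what you do throughout: classify $q\bmod 2$, reduce to $\tilde q(x_1,x_2)+\ell(x_1,x_2)x_3+\mu x_3^2$, and kill $\ell$ by a shear, using that the Hessian of $\tilde q$ is a unit mod $2$. Your proof unifies the two cases by always moving the singular point to $[0:0:1]$ first and then solving the same $2\times 2$ system; this buys a parity-free presentation at the cost of a little extra work (the Hensel factorization of $\tilde q_1$ in case~(ii)) that the paper avoids for odd $p$ via diagonalization. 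Both arguments rely on Lemma~\ref{wa} in the same way to pass from $\mathrm{SL}_3(\Z/p^{e+1}\Z)$ back to $\mathrm{SL}_3(\Z)$.
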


\begin{proof}
    In view of Lemma \ref{wa} it suffices to find a suitable $p$-adic matrix
$M\in\mathrm{SL}_3(\Z_p)$. When $p$ is odd we can diagonalize over
$\Z_p$ to give $Ax_1^2+Bx_2^2+Cx_3^2$, say.  Since $q$ has determinant
divisible by $p$, but does not vanish modulo $p$ we see 
that either case (i) of the lemma holds, or that we may take
$p\nmid AB$ and $p^e||C$. We then have case (ii) if $-AB$ is
a quadratic residue of $p$, and case (iii) otherwise.

For $p=2$ we consider the reduction of $q$ over $\mathbb{F}_2$.  Since
$2\mid\Delta$ we find that $q(\x)$ is equivalent to one of $x_3^2$, or
$x_1x_2$, or $x_1^2+x_1x_2+x_2^2$ over $\mathbb{F}_2$, via a matrix in 
$\mathrm{SL}_3(\mathbb{F}_2)$. (This can be shown by considering all
possible forms $q$ modulo 2, if necessary.) The first case leads
immediately to case (i) of the lemma. In the remaining cases, Lemma
\ref{wa} shows that $q$ is equivalent to
$\tilde{q}(x_1,x_2)+\ell(x_1,x_2)x_3+\mu x_3^2$ over $\Z_2$, where
$\ell(x_1,x_2)$ is a linear form, and
$\tilde{q}(x_1,x_2)\equiv x_1x_2$ or
$x_1^2+x_1x_2+x_2^2\modd{2}$. Replacing $x_1$ and $x_2$ by
$x_1-\xi_1x_3$ and $x_2-\xi_2x_3$ respectively eliminates the term
$\ell(x_1,x_2)x_3$ provided that
\[\ell(x_1,x_2)=\xi_1\frac{\partial\tilde{q}(x_1,x_2)}{\partial x_1}
+\xi_2\frac{\partial\tilde{q}(x_1,x_2)}{\partial x_2}.\]
Suitable $\xi_1,\xi_2\in\Z_2$ can always be found, since the linear
forms $\partial\tilde{q}/\partial x_1$ and
$\partial\tilde{q}/\partial x_2$ are congruent modulo 2 to $x_2$ and
$x_1$ respectively. We then conclude that $q$ is equivalent to
$\tilde{q}(x_1,x_2)+\mu' x_3^2$ over $\Z_2$.  Computing the
determinant of this we find that $2^e||\mu'$. When
\[\tilde{q}(x_1,x_2)\equiv x_1^2+x_1x_2+x_2^2\modd{2}\]
we obtain case
(iii) of the lemma.  Finally, if $\tilde{q}(x_1,x_2)\equiv x_1x_2\modd{2}$
we see from Hensel's Lemma that $\tilde{q}(x_1,x_2)$ must factor over
$\Z_2$, and a further unimodular change of variables leads to case
(ii) of the lemma.
\end{proof}

We next have the following lemma, which shows how we remove powers of
$p$ from $\Delta(q)$.
\begin{lemma}\label{L1}
  Suppose that $q(\x)$ is an integral isotropic ternary
  quadratic form, not necessarily primitive, and that
$p^e||\Delta(q)\not=0$. Then there is a positive integer $K\le e+1$
  such that $K=2$ when $e=1$, and there are $3\times 3$ integer matrices
$R_1,\ldots,R_K$ with determinants
$\det(R_k)=p^{\mu_k}$, such that the following properties hold.
Firstly, $\mu_k\le e$ is a non-negative integer with
$\mu_k\equiv e\modd{3}$ for each $k\le K$.  Secondly, the
form
\beql{qk}
p^{-(e+2\mu_k)/3}q(R_k\x)
\eeq
has integer coefficients and has determinant $p^{-e}\Delta(q)$.
Thirdly, if $q(\x)$ vanishes for some primitive $\x\in\Z^3$,
then there is exactly one index $k\le K$ for which
$R_k^{-1}\x\in\Z^3$.
\end{lemma}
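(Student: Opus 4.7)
My plan is to induct on $e$, applying Lemma~\ref{rp} at each step to bring $q$ into one of three normal forms via some $M_0 \in \mathrm{SL}_3(\Z)$, then partitioning primitive zeros of $\tilde q := q \circ M_0$ according to their $p$-adic reduction pattern. The base case $e = 0$ is trivial (take $K = 1$, $R_1 = I$, $\mu_1 = 0$). For $e \ge 1$, primitive zeros of $q$ correspond bijectively via $\x \mapsto M_0\x$ to primitive zeros of $\tilde q$, and I would classify those zeros by the case of Lemma~\ref{rp} and a further sub-case analysis.

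The central case is case (ii), $\tilde q \equiv x_1 x_2 + \kappa p^e x_3^2 \pmod{p^{e+1}}$. Here the mod-$p$ conic is the union $L_1 \cup L_2$ of the lines $x_1 = 0$ and $x_2 = 0$, meeting at the singular point $S = (0\!:\!0\!:\!1)$. For a primitive zero $\x$ (so $p \nmid x_3$), a valuation balance in the defining equation forces $v_p(x_1) + v_p(x_2) = e$. If $\bar \x \in L_1 \setminus \{S\}$ then $v_p(x_1) = e$ and I take $R = M_0 \cdot \mathrm{diag}(p^e, 1, 1)$ with $\mu = e$; the $L_2 \setminus \{S\}$ case is symmetric. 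If $\bar \x = S$ then $e \ge 2$ and $p \mid x_1, x_2$; substituting $x_1 = pz_1$, $x_2 = pz_2$ and dividing by $p^2$ yields the integer form $\tilde q''(\z) = \tilde q(pz_1, pz_2, z_3)/p^2$ with $v_p(\Delta(\tilde q'')) = e - 2$, and on a non-empty $S$-branch this form is isotropic, so the inductive hypothesis applies; composing each resulting matrix with $M_0 \cdot \mathrm{diag}(p, p, 1)$ adds $2$ to its $\mu$. The recursion $K_{ii}(e) = 2 + K_{ii}(e-2)$ with $K_{ii}(0) = 1$, $K_{ii}(1) = 2$ yields $K_{ii}(e) = e + 1$; for $e = 1$ the $S$-branch is vacuous, giving $K = 2$.

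Case (i), $\tilde q \equiv \kappa x_3^2 \pmod p$, requires $e \ge 2$ (direct expansion of $\det Q$ in powers of $p$ shows $v_p(\Delta) \ge 2$). A primitive zero has $p \mid x_3$; the substitution $x_3 = pz_3$ produces an isotropic form $\tilde q'$ with $v_p(\Delta) = e-1$, and induction applies, composing with $M_0 \cdot \mathrm{diag}(1, 1, p)$ (adding $1$ to $\mu$). In case (iii), $\tilde q \equiv q_1(x_1, x_2) + \kappa p^e x_3^2 \pmod{p^{e+1}}$ with $q_1$ anisotropic over $\mathbb{F}_p$, a primitive zero has $p \mid x_1, x_2$ and $p \nmid x_3$; anisotropy of $q_1$ forces $v_p(q_1(x_1, x_2)) = 2 \min(v_p(x_1), v_p(x_2))$, and balancing this against $\kappa p^e x_3^2$ in $\tilde q(\x) = 0$ requires this valuation to equal $e$, so $e$ must be even. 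When $e = 2k$ I use the single matrix $R = M_0 \cdot \mathrm{diag}(p^k, p^k, 1)$ with $\mu = e$; for odd $e$, case (iii) is incompatible with isotropy and does not arise.

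The remaining verifications are routine: integrality of $p^{-(e+2\mu)/3} q(R\x)$ (direct substitution) and its determinant being $\Delta(q)/p^e$ (using $\Delta(q \circ R) = \det(R)^2 \Delta(q)$ and the ternary homogeneity $\Delta(cf) = c^3 \Delta(f)$); the congruence $\mu \equiv e \pmod 3$ is preserved because each compositional step adds $1$ (case (i)), $2$ (case (ii) $S$-branch), or directly produces $\mu = e$ (cases (ii) direct, (iii)); and uniqueness of the class of each primitive zero follows because the pertinent sub-lattices correspond to mutually exclusive valuation patterns, with inductive uniqueness handling the recursive sub-cases. The main technical obstacle is the valuation analysis in case (ii), specifically extracting $v_p(x_1) = e$ exactly in the $L_1 \setminus \{S\}$ sub-case from the congruence modulo $p^{e+1}$; the parity argument in case (iii) likewise depends crucially on the anisotropy of $q_1$ over $\mathbb{F}_p$.
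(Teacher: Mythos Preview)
Your inductive framework is the same as the paper's, and most of the case analysis is sound. There is, however, one genuine gap and some interesting differences in execution.

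\textbf{The gap.} Lemma~\ref{rp} requires that $q$ not be identically divisible by~$p$, but Lemma~\ref{L1} is stated for forms that are ``not necessarily primitive''. Your plan begins every inductive step by invoking Lemma~\ref{rp}, so it simply does not cover the case $p\mid q$. This is not merely a hypothetical worry: the recursion in your case~(i) can produce such forms. After substituting $x_3\mapsto px_3$ and dividing by~$p$, the new form has $z_1^2,\,z_1z_2,\,z_2^2$ coefficients equal to $a_{11}/p,\,a_{12}/p,\,a_{22}/p$, while the remaining coefficients are divisible by~$p$; nothing prevents $p^2\mid a_{11},a_{12},a_{22}$, in which case the new form is divisible by~$p$ and Lemma~\ref{rp} fails to apply at the next step. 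The fix is easy and is exactly what the paper does first in its inductive step: if $p\mid q$, write $q=p\,q'$, note that $e\ge 3$, and apply the induction hypothesis to~$q'$ with exponent $e-3$, taking the same matrices $R_k$ (so each $\mu_k$ is unchanged and still $\equiv e\pmod 3$). You should insert this branch before appealing to Lemma~\ref{rp}.

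\textbf{Differences from the paper.} Your treatments of cases~(ii) and~(iii) each swap the paper's choice of ``direct versus recursive''. In case~(ii) the paper avoids recursion altogether: it takes the $e+1$ matrices $R_k=M_0\,\mathrm{diag}(p^{k-1},p^{e+1-k},1)$ with $\mu_k=e$ for all~$k$, and shows directly from the congruence $y_1y_2+\kappa p^ey_3^2\equiv 0\pmod{p^{e+1}}$ that a primitive zero has $v_p(y_1)+v_p(y_2)=e$ exactly, which pins down a unique~$k$. Your three-branch split ($L_1\setminus\{S\}$, $L_2\setminus\{S\}$, and the recursive $S$-branch) also works, but is more elaborate; note too that your parenthetical ``so $p\nmid x_3$'' is false on the $L_i\setminus\{S\}$ branches (one can have $p\mid y_3$ there), though this does not damage the argument since you only need $v_p(y_1)\ge e$, not equality. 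Conversely, in case~(iii) the paper recurses via $\mathrm{diag}(p,p,1)$ reducing $e$ by~$2$, whereas your observation that isotropy forces $e=2k$ even (from $v_p(q_1(y_1,y_2))=2\min(v_p(y_1),v_p(y_2))$ balanced against the $p^e$-term) and that the single matrix $\mathrm{diag}(p^k,p^k,1)$ suffices is correct and arguably cleaner than the paper's inductive treatment of this case.
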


\begin{proof}
 Clearly the form (\ref{qk}) has determinant
  $p^{-e}\Delta(q)$. The proof of the lemma
  will be by induction on $e$. When $e=0$ we have $K=1$ and
  $R_1$ can be taken to be the identity. To handle the induction step
  we assume that the lemma holds for exponents
  strictly less than $e$. Suppose firstly that the form $q$ is
  identically divisible by $p$, so that $e\ge 3$. Write $q'(\x)=p^{-1}q(\x)$, 
  whence $p^{e-3}||\Delta(q')$.  By the induction assumption we have
  matrices $R'_1,\ldots,R'_J$ with $J\le (e-3)+1=e-2$, and
  exponents $\mu'_k\le e-3\le e$. We now claim that we can take
  $K=J\le e+1$ and $R_k=R'_k$ for every index $k$, so that
  $\mu_k=\mu'_k$. In the first place we have
  \[\mu_k=\mu'_k\equiv e-3\equiv e\modd{3}.  \]
  Secondly,
  \[p^{-(e+2\mu_k)}q(R_k\x)=p^{-\{(e-3)+\mu'_k\}/3}q'(R'_k\x)\]
    which is an integral form. Thirdly, if
    $q(\x)=0$ for some primitive $\x\in\Z^3$, then $q'(\x)=0$, whence
    there is exactly one index for which ${R'_k}^{-1}\x$ is
    integral. Thus there is exactly one index for which $R_k^{-1}\x$ is
    integral.

When $q(\x)$ is not identically divisible by $p$ we apply Lemma
\ref{rp}, and consider separately the three possible cases. Suppose
firstly that $q(M\x)\equiv \kappa x_3^2\modd{p}$, with $p\nmid \kappa$.
In this case we must have $e\ge 2$.
Then if $M'=\mathrm{Diag}(1,1,p)$ the form $q'(\x)=p^{-1}q(MM'\x)$
    will be integral, with determinant $p^{-1}\Delta(q)$. Moreover it is
    still isotropic, so that we may apply the induction hypothesis
    to $q'$, with
    $p^{e-1}||\Delta(q')$. This produces matrices $R'_1,\ldots,R'_J$
    with $J\le e$, and exponents $\mu'_k\le e-1$ such that
    $\det(R'_k)=p^{\mu'_k}$. We now claim that we can take $K=J$ and
    $R_k=MM'R'_k$ in the lemma.  This will have determinant
    $p^{\mu_k}$ with $\mu_k=1+\mu'_k\equiv 1+(e-1)=e\modd{3}$, as
    required. Moreover
    \begin{eqnarray*}
      p^{-(e+2\mu_k)/3}q(R_k\x)&=&p^{-(e+2\mu_k)/3}pq'(R'_k\x)\\
      &=& p^{-(\{e-1\}+2\mu'_k)/3}q'(R'_k\x)
    \end{eqnarray*}
    which is an integral form, by the induction hypothesis. Finally, when
    $q(\x)=0$ with a primitive $\x\in\Z^3$, we set $\y=M^{-1}\x$, so
    that
    \[0=q(M\y)\equiv\kappa y_3^2\modd{p},\]
    with $p\nmid \kappa$. Then
    $p\mid y_3$, whence ${M'}^{-1}\y\in\Z^3$. It follows that the
    vector $\z=(MM')^{-1}\x$
    is integral, and is primitive since $\x=MM'\z$ is
    primitive. Moreover $q'(\z)=p^{-1}q(\x)=0$, whence the induction
    hypothesis shows that there is exactly one $R'_k$ for which
    ${R'_k}^{-1}\z\in\Z^3$. Hence there is exactly one index $k$ such
    that $R_k^{-1}\x\in\Z^3$.  This completes the proof of Lemma
    \ref{L1} when we are in case (i) of Lemma \ref{rp}.

We turn next to case (ii) of Lemma \ref{rp}, in which
\[q(M\x)\equiv x_1x_2+\kappa p^ex_3^2\modd{p^{e+1}}\]
for some integer $\kappa$ coprime to $p$. We claim that we may take
$K=e+1$ in Lemma \ref{L1}, and
\[R_k=M\mathrm{Diag}(p^{k-1},p^{e+1-k},1)\;\;\;
\mbox{for}\;\;\; 1\le k\le K,\]
    so that $\mu_k=e$ for every $k$. With this choice we have
    \[q(R_k\x)\equiv p^e(x_1x_2+\kappa x_3^2)\modd{p^{e+1}},\]
    so that $p^{-e}q(R_k\x)$ is integral.  Suppose now that $q(\x)=0$
    with $\x$ primitive, and write $\y=M^{-1}\x$, so that
    \beql{cg}
    0=q(M\y)\equiv y_1y_2+\kappa p^ey_3^2\modd{p^{e+1}}.
    \eeq
    It follows that $p^e\mid y_1y_2$, whence there is a positive
    integer $k\le e+1$ such that $p^{k-1}\mid y_1$ and
    $p^{e+1-k}\mid y_2$. We then see that $\y$ lies in the image of
    $\mathrm{Diag}(p^{k-1},p^{e+1-k},1)$, so that
    $\x\in R_k(\Z^3)$. Finally if we also have $\x\in R_j(\Z^3)$
    for some $j>k$ then $p^{j-1}\mid y_1$, whence $p^k\mid y_1$. Since
    $p^{e+1-k}\mid y_2$ it would follow firstly that
    $p^{e+1}\mid y_1y_2$, and secondly that $p\mid y_2$, since
    $e+1-k>e+1-j\ge 0$. However when $p^{e+1}\mid y_1y_2$ the
    congruence (\ref{cg}) shows that $p\mid y_3$. We then reach a
    contradiction, since $p$ cannot
    divide $\y$ when $\x$ is primitive.  This completes the proof of Lemma
    \ref{L1} when we are in case (ii) of Lemma \ref{rp}.
    
Finally we examine case (iii) of Lemma \ref{rp}, in which
\[q(M\x)\equiv q_1(x_1,x_2)+\kappa p^ex_3^2\modd{p^{e+1}}\]
for some integer $\kappa$ coprime to $p$, with $q_1$ irreducible
modulo $p$.  One sees that if $q$ is isotropic we must have $e\ge 2$.
The argument is now similar to that for case (i). Let
    $M'=\mathrm{Diag}(p,p,1)$. Then the form
    $p^{-2}q(MM'\x)$ will be integral, with determinant
    $p^{-2}\Delta(q)$. This will have corresponding matrices
    $R'_k$ with determinant $p^{\mu'_k}$, and we may take
    $R_k=MM'R'_k$ with corresponding value $\mu_k=\mu'_k+2$. We
    leave the reader to verify that these fulfil the conditions for
    Lemma \ref{L1}.  This completes the argument.
    \end{proof}

We are now ready to prove Theorem \ref{t1}
\begin{proof}
We will use induction on the number of distinct prime
divisors of $\Delta$.  We therefore begin by considering the case in which
$\Delta=1$. Here we will have $K=1$, and we claim that we may take
$D_1=1$.
Since $q(\x)$ is isotropic there is a primitive integer vector $\z$
such that $q(\z)=0$. We may then construct a unimodular integer matrix $M$ with
first column $\mathbf{z}$. This produces a form $q(M\x)$ equivalent to
$q$ and taking the shape $x_1(ax_2+bx_3)+q_1(x_2,x_3)$. The
coefficients $a$ and $b$ must be coprime, since $\Delta=1$. A
further unimodular transformation involving $x_2$ and $x_3$ produces
$x_1x_3+q_2(x_2,x_3)$, say. Now we replace $x_1$ by
$x_1+Ax_2+Bx_3$ for suitable integers $A,B$ to obtain a form
$x_1x_3+\lambda x_2^2$. Since the determinant is still $\Delta(q)=1$
we see that
$\lambda=-1$. Thus $q$ is transformed into $x_1x_3-x_2^2$
by a unimodular integer matrix, as required.

Now suppose that $p^e||\Delta$. Lemma \ref{L1} produces matrices
$R_1,\ldots,R_K$ with corresponding exponents $\mu_k$, such that the forms
\[q_k(\x):=p^{-(e+2\mu_k)/3}q(R_k\x)\]
have determinant $p^{-e}\Delta$. Our induction hypothesis, applied to
$q_k$, now produces
further matrices $M_{1,k},\ldots,M_{J,k}$ with
$J=J(k)\le\tau(p^{-e}\Delta)$.  Since the index $k$ runs up to $e+1$ at
most, there are at most
\[(e+1)\tau(p^{-e}\Delta)=\tau(\Delta)\]
matrices in total.  Moreover if $e=1$ the index $k$ takes exactly the
two values 1 and 2.
We now claim that the matrices $R_kM_{j,k}$ have the required
properties. Firstly, $K$ is increased by a factor 2 for each prime
factor $p||\Delta$, so that $K=\tau(\Delta)$ if $\Delta$
is square-free.  Secondly,
$\det(M_{j,k})$ divides $p^{-e}\Delta$ by the
induction hypothesis, and since
$\det(R_k)\mid p^e$ it follows that $\det(R_kM_{j,k})\mid \Delta$, as
required. Thirdly, we observe that
\[q(R_kM_{j,k}\x)=p^{(e+2\mu_k)/3}q_k(M_{j,k}\x)=D_{j,k}J(\x)\]
for a suitable integer $D_{j,k}$.

For part (iv), let $q(\x)=0$ for some
primitive $\x\in\Z^3$. Then, according to Lemma \ref{L1}, there is 
an index $k$ for which $R_k^{-1}\x\in\Z^3$. Moreover, if we write
$\y=R_k^{-1}\x$ then $\y$ must be a primitive integer vector, and 
$q_k(\y)=0$.  Then, by the induction assumption there is a choice of
$j$ such that $M_{j,k}^{-1}\y\in\Z^3$.  Thus
$(R_kM_{j,k})^{-1}\x\in\Z^3$. Finally, if we also have
$(R_hM_{i,h})^{-1}\x\in\Z^3$  we may write
$(R_kM_{j,k})^{-1}\x=\mathbf{u}\in\Z^3$ and
$(R_hM_{i,h})^{-1}\x=\mathbf{v}\in\Z^3$.  Then
$R_k^{-1}\x=M_{j,k}\mathbf{u}$ and $R_h^{-1}\x=M_{i,h}\mathbf{v}$ are
both integral.  According to Lemma \ref{L1} we must therefore have
$k=h$. Thus $R_h^{-1}\x=R_k^{-1}\x=\y$, and both
$M_{i,h}^{-1}\y=M_{i,k}^{-1}\y=\mathbf{v}$ and
$M_{j,k}^{-1}\y=\mathbf{u}$ are integral. Our induction hypothesis
then shows that we must have $i=j$, so that there is exactly one
choice of $k$ and $j$ for which $(R_kM_{j,k})^{-1}\x$ lies in
$\Z^3$.

To handle the remaining claims of the theorem we do not use the
induction argument. Given (\ref{fun}), we obtain the relation
\[\Delta\det(M_k)^2=D_{k}^3\]
by taking determinants. Since $\det(M_k)\mid\Delta$ we have
\[\Delta\det(M_k)^2\mid\Delta^3\;\;\;\mbox{and}\;\;\;
\det(M_k)^3\mid\Delta\det(M_k)^2,\]
so that $D_{k}^3\mid\Delta^3$ and
$\det(M_k)^3\mid D_{k}^3$. Next we write
$\x=\Adj(M_k)\y$ in (\ref{fun}) and note that $\Adj(M_k)=\det(M_k)M_k^{-1}$.
This yields
\[\det(M_k)^2q(\y)=D_{k}J\left(\Adj(M_k)\y\right),\]
whence $D_k\mid\det(M_k)^2$, since the form $q$ was assumed to be
primitive. This establishes part (v).  For part (vi)
we see that if $p\mid\Delta$ then $p\mid D_k$, and
hence $p\mid \det(M_k)$.  Moreover if $p^e||\Delta$ and $p^f||\det(M_k)$,
then $3\mid e+2f$, since $\Delta\det(M_k)^2$ is a cube. We then see
that we must $e=f$ whenever $e\le 3$.  Finally, if $\Delta$ is
cube-free then so is $\det(M_k)$, whence the entries of $M_k$ can have no
common factor. Any vector $M_k(u^2,uv,v^2)$ will be a zero of $q$, so
we need to find integers $u,v$ for which $M_k(u^2,uv,v^2)$ is
primitive.  If $p$ is a prime not dividing $\Delta$ then $M_k$ is
invertible modulo $p$ so that $p\nmid M_k(u^2,uv,v^2)$ whenever
$p\nmid(u,v)$. Otherwise $p$ can divide at most two columns of $M_k$.
If $p$ does not divide the first column then $p\nmid M_k(1,0,0)$.
Similarly if $p$ does not divide the third column of $M_k$ then
$p\nmid M_k(0,0,1)$. Finally, if $p$ divides the first and third
columns but not the second, then $p\nmid M_k(1,1,1)$.  It follows, via
the Chinese Remainder Theorem, that if $(u,v)$ lies in a suitable
residue class modulo $\Delta$ then the vector $M_k(u^2,uv,v^2)$ will
be coprime to $\Delta$.  One can now show via the standard arguments
that the set of integer pairs $u,v$ in such a residue class for which
$u$ and $v$ are coprime, will have positive density, given by
\[\Delta^{-2}\prod_{p\nmid\Delta}(1-p^{-2}).\]
We therefore obtain infinitely many pairs $u,v$ for which
$M_k(u^2,uv,v^2)$ is primitive.

This completes the proof of the theorem
\end{proof}

\section{Proof of Theorem \ref{t3}}

We begin with the following informal observation.  If the coefficients
of $A$ are very large compared with those of $q$, then $J(A\x)=q(\x)$ has
coefficients which are much smaller than they might be, so that $J(A\x)$
``nearly vanishes''.  If $A$ has rows
$\mathbf{a},\mathbf{b},\mathbf{c}$, then
\[J(A\x)=(\mathbf{a}.\x)(\mathbf{c}.\x)-(\mathbf{b}.\x)^2,\]
so that $(\mathbf{a}.\x)(\mathbf{c}.\x)$ is approximately equal to
$(\mathbf{b}.\x)^2$.  If in fact they were identically equal, the
linear forms $\mathbf{a}.\x$, $\mathbf{b}.\x$ and $\mathbf{c}.\x$
would have to be proportional, and so the vectors
$\mathbf{a}$, $\mathbf{b}$ and $\mathbf{c}$ would also be
proportional.

Our next lemma confirms this, in a quantitative way.
\begin{lemma}\label{ald}
  Suppose $A$ has rows $\mathbf{a},\mathbf{b},\mathbf{c}$, and that
  $||\mathbf{a}||\le ||\mathbf{c}||$.  Write
  $\bfa=\lambda\bfc+\bfd$ and $\bfb=\mu\bfc+\bfe$, where $\bfd$ and
  $\bfe$ are orthogonal to $\bfc$.  Then if $J(A\x)=q(\x)$ we have
  \begin{enumerate}
  \item[(i)] $||\bfe||\le 2^{-1/2}||q||^{1/2}$;
  \item[(ii)] $|\lambda-\mu^2|\le||q||/(2||\bfc||^2)$; 
  \item[(iii)] $||\bfd-2\mu\bfe||\le||q||/||\bfc||$; and
    \item[(iv)] $|\lambda|\le 1$.    
  \end{enumerate}
\end{lemma}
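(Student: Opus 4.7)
The plan is to substitute the decompositions $\bfa = \lambda\bfc + \bfd$ and $\bfb = \mu\bfc + \bfe$ into the identity
\[ q(\x) = (\bfa\cdot\x)(\bfc\cdot\x) - (\bfb\cdot\x)^2 \]
and use $\bfd,\bfe \perp \bfc$ to rearrange it as
\[ q(\x) = (\lambda - \mu^2)(\bfc\cdot\x)^2 + (\bfc\cdot\x)\bigl((\bfd - 2\mu\bfe)\cdot\x\bigr) - (\bfe\cdot\x)^2. \]

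Next I would choose an orthonormal basis $\{\b{v}_1,\b{v}_2,\b{v}_3\}$ of $\R^3$ with $\b{v}_3 = \bfc/||\bfc||$, so that $\bfd$ and $\bfe$ lie in the span of $\b{v}_1,\b{v}_2$. Writing $q(\x) = \x^{T}\tilde Q\x$, the matrix in the definition of $||q||$ satisfies $Q = 2\tilde Q$, hence $||q||^2 = ||Q||_F^{\,2} = 4||\tilde Q||_F^{\,2}$, and $||\tilde Q||_F$ is invariant under orthogonal change of basis, so I may compute it in the new coordinates $\xi_i = \b{v}_i\cdot\x$. The three pieces of the displayed expression contribute, respectively, the $(3,3)$-entry $(\lambda-\mu^2)||\bfc||^2$; four off-diagonal entries equal to $\tfrac12||\bfc||(\bfd-2\mu\bfe)_i$ at positions $(i,3)$ and $(3,i)$, $i=1,2$; and a $-\bfe\bfe^{T}$ block in the upper-left $2\times 2$. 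Summing the squares of all entries gives the master identity
\[ ||\bfe||^4 + (\lambda - \mu^2)^2 ||\bfc||^4 + \frac{||\bfc||^2\,||\bfd - 2\mu\bfe||^2}{2} \;=\; \frac{||q||^2}{4}. \]

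Since each summand on the left is non-negative, parts (i), (ii) and (iii) now fall out by keeping one term at a time (part (iii) in fact comes out with the sharper constant $2^{-1/2}||q||/||\bfc||$). Part (iv) does not use the quadratic form identity at all: the orthogonal decomposition $\bfa = \lambda\bfc + \bfd$ forces $||\bfa||^2 = \lambda^2||\bfc||^2 + ||\bfd||^2 \ge \lambda^2||\bfc||^2$, so the hypothesis $||\bfa||\le||\bfc||$ gives $|\lambda|\le 1$ immediately.

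There is no genuine obstacle; the only thing one has to watch is the bookkeeping of the factor of $4$ between $||q||^2$ and $||\tilde Q||_F^{\,2}$ (arising from the doubled diagonal entries in $Q$) and the factor of $\tfrac12$ that appears when off-diagonal coefficients of $q$ are split symmetrically to form $\tilde Q$. These are exactly what produce the clean constants $2^{-1/2}$, $1/2$ and $1$ in the statement of the lemma.
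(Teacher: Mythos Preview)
Your argument is correct, and it takes a genuinely different route from the paper. The paper proves each of (i)--(iii) separately by evaluating the form or its associated bilinear map at well-chosen test vectors: it plugs $\x=\bfe$ and $\x=\bfc$ into the inequality $|q(\x)|\le\tfrac12||q||\,||\x||^2$ to obtain (i) and (ii), and then computes the scalar $\bfc\,Q\,\bff^T$ with $\bff=\bfd-2\mu\bfe$ and bounds it by $||\bfc||\,||\bff||\,||q||$ for (iii). Your method instead expresses the full Frobenius norm of $\tilde Q$ in a basis adapted to $\bfc$, obtaining the single exact identity
\[
||\bfe||^4+(\lambda-\mu^2)^2||\bfc||^4+\tfrac12||\bfc||^2||\bfd-2\mu\bfe||^2=\tfrac14||q||^2,
\]
from which all three inequalities drop out at once; this even improves the constant in (iii) to $2^{-1/2}||q||/||\bfc||$. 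The paper's approach is slightly more hands-on and avoids invoking orthogonal invariance of the Frobenius norm, while yours is cleaner and shows that (i)--(iii) are really three facets of the same equality. Part (iv) is handled the same way in both.
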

The reader should note that the bounds (i), (ii) and
(iii) above imply that
\[|q(\x)|=\left|(\bfa.\x)(\bfc.\x)-(\bfb.\x)^2\right|\le
2 ||q||\cdot||\x||^2.\]
Thus they ensure that $q(\x)$ has the expected order of magnitude,
irrespective of the size of $\bfa$, $\bfb$ and $\bfc$.

\begin{proof}
    We begin by observing that in general one has
    \[||M\x||\le||M||\cdot||\x||\]
    for any $3\times 3$ matrix $M$, whence
  \beql{qE}
  |q(\x)|\le \tfrac12 ||\x||\cdot||Q\x||\le\tfrac12||Q||\cdot||\x||^2=
  \tfrac12||q||\cdot||\x||^2.
  \eeq
Taking $\x=\bfe$ we have $|q(\bfe)|\le\tfrac12 ||q||\cdot||\bfe||^2$.
Moreover
\[q(\bfe)=(\bfa.\bfe)(\bfc.\bfe)-(\bfb.\bfe)^2=-(\bfb.\bfe)^2,\]
since $\bfe$ and $\bfc$ are orthogonal.  However
$\bfb.\bfe=||\bfe||^2$, again since $\bfe$ and $\bfc$ are orthogonal.
It follows that
\[||\bfe||^4=|q(\bfe)|\le\tfrac12||q||\cdot||\bfe||^2,\]
and hence that $||\bfe||^2\le\tfrac12 ||q||$.  The first claim of
the lemma then follows.

Alternatively we may take $\x=\bfc$ in (\ref{qE}).  Here we have
\[q(\bfc)=(\bfa.\bfc)(\bfc.\bfc)-(\bfb.\bfc)^2=
\lambda||\bfc||^4-\mu^2||\bfc||^4,\]
whence (\ref{qE}) yields
\[|\lambda-\mu^2|\cdot||\bfc||^4\le\tfrac12||q||\cdot||\bfc||^2.\]
This gives us the second assertion of the lemma.

Thirdly we consider $\bfc Q\bff^T$, where $\bff=\bfd-2\mu\bfe$.
Recalling that $\bfa$ etc. are row vectors, we have
\[\bfc Q\bff^T=\bfc A^T\left(\begin{array}{rrr} 0 & 0 & 1\\
  0 & -2 & 0\\ 1 & 0 & 0\end{array}\right)A\bff^T=
  (\bfc.\bfc)(\bfa.\bff)-2(\bfb.\bfc)(\bfb.\bff)+(\bfa.\bfc)(\bfc.\bff).\]
  However $\bfb.\bfc=\mu||\bfc||^2$, and since $\bff$ is orthogonal to
  $\bfc$ we have $\bfa.\bff=\bfd.\bff$, $\bfb.\bff=\bfe.\bff$ and
$\bfc.\bff=0$, so that
  \[\bfc Q\bff^T=\{(\bfd.\bff)-2\mu (\bfe.\bff)\}||\bfc||^2
  =||\bff||^2||\bfc||^2.\]
  On the other hand
  \[|\bfc Q\bff^T|\le ||\bfc||\cdot||Q\bff^T||\le
  ||\bfc||\cdot||\bff||\cdot||Q||=||\bfc||\cdot||\bff||\cdot||q||.\]
  Thus $||\bff||\le ||q||/||\bfc||$ as in the third claim of the
  lemma.

  The final part is merely a trivial consequence of our initial
  assumption that  $||\bfa||\le ||\bfc||$.
\end{proof}

We are now ready to prove Theorem \ref{t3}.
Suppose we have found a matrix $UA$ with rows $\bfa$, $\bfb$
and $\bfc$, such that $||\bfa||\cdot||\bfc||$ is minimal.
Since $J(UA\x)=J(A\x)=q(\x)$ we may
apply Lemma \ref{ald} to $UA$. Premultiplying $UA$ by
\beql{U0d}
U_2=\left(\begin{array}{rrr} 0 & 0 & 1\\
  0 & 1 & 0\\ 1 & 0 & 0\end{array}\right)\in\Aut(J)
  \eeq
  if necessary we may assume that
  $||\mathbf{a}||\le ||\mathbf{c}||$. Similarly, premultiplying by
  \[\left(\begin{array}{rrr} 1 & 0 & 0\\
  0 & -1 & 0\\ 0 & 0 & 1\end{array}\right)\in\Aut(J)\]
  if necessary, we may assume that $\mu\ge 0$, in the notation of
  Lemma \ref{ald}.

  We begin the proof by observing that it suffices to show that we
have  $||\bfa||\cdot||\bfc||\le 9||q||$.  To see this we note that the
  choice $\x=\bfb$ in (\ref{qE}) yields
  \[\left|(\bfa.\bfb)(\bfc.\bfb)-||\bfb||^4\right|\le\tfrac12
  ||q||\cdot||\bfb||^2,\]
  whence
  \begin{eqnarray*}
    ||\bfb||^4&\le&\tfrac12 ||q||\cdot||\bfb||^2 +|(\bfa.\bfb)(\bfc.\bfb)|\\
    &\le&\{\tfrac12 ||q||+||\bfa||\cdot||\bfc||\}||\bfb||^2\\
    &\le& 10||q||\cdot||\bfb||^2,
    \end{eqnarray*}
given that  $||\bfa||\cdot||\bfc||\le 9||q||$. This gives us the required 
second bound $||\bfb||^2\le 10 ||q||$.

  We also note that if $||\bfc||\le 3\sqrt{||q||}$ then
  \[||\bfa||\cdot||\bfc||\le||\bfc||^2\le 9||q||,\]
  since we are assuming that $||\bfa||\le||\bfc||$. Thus we may suppose
  that
  \beql{s3}
  ||\bfc||\ge3\sqrt{||q||}
  \eeq
  for the remainder of the proof.
  
We now consider $U_1UA$ where
\[U_1=\left(\begin{array}{rrr} 1 & 0 & 0 \\ -1 & 1 & 0\\ 1 & -2 & 1
\end{array}\right).\]
Then $U_1\in \Aut(J)$ and $U_1UA$ has rows
$\bfa$, $-\bfa+\bfb, \bfa-2\bfb+\bfc$. 
Since $UA$ was chosen with $||\bfa||\cdot||\bfc||$ minimal, we conclude that
\[||\bfa||\cdot||\bfa-2\bfb+\bfc||\ge||\bfa||\cdot||\bfc||,\]
and hence that
\[||\bfa-2\bfb+\bfc||\ge||\bfc||.\]
We now substitute $\bfa=\lambda\bfc+\bfd$ and $\bfb=\mu\bfc+\bfe$,
yielding
\[||(\lambda-2\mu+1)\bfc+\bfd-2\bfe||\ge||\bfc||.\]
Thus parts (ii) and (iii) of Lemma \ref{ald} yield
\begin{eqnarray*}
  ||\bfc||&\le& |\lambda-2\mu+1|\cdot||\bfc||+||\bfd-2\bfe||\\
  &\le&|\mu^2-2\mu+1|\cdot||\bfc||+\frac{||q||}{2||\bfc||}+
\frac{||q||}{||\bfc||}+2|\mu-1|\cdot||\bfe||.
\end{eqnarray*}
However parts (ii) and (iv) of the lemma, along with our assumption (\ref{s3}),
show that
\[\mu^2\le |\lambda|+||q||/(2||\bfc||^2)\le \frac{19}{18}.\]
Since we are assuming that $\mu\ge 0$ we conclude that
$0\le\mu\le 37/36$.  In particular $|1-\mu|\le 1$ so that (\ref{s3}) yields
\begin{eqnarray*}
  ||\bfc||&\le& (1-\mu)^2||\bfc||+\frac{3||q||}{2||\bfc||}+2||\bfe||\\
  &\le& (1-\mu)^2||\bfc||+\frac{\sqrt{||q||}}{2}+\sqrt{2||q||}\\
  &\le& (1-\mu)^2||\bfc||+2\sqrt{||q||},
\end{eqnarray*}
by part (i) of Lemma \ref{ald}.  It now follows that
\beql{pp}
\mu(2-\mu)\le\frac{2\sqrt{||q||}}{||\bfc||},
\eeq
and since $0\le\mu\le{37/36}$ we deduce that
\[\frac{35}{36}\mu\le\frac{2\sqrt{||q||}}{||\bfc||},\]
whence 
  \[0\le\mu\le \frac{72}{35}\frac{\sqrt{||q||}}{||\bfc||}.\]
  
  From part (ii) of Lemma \ref{ald} we now have
  \[|\lambda|\le \frac{||q||}{2||\bfc||^2}+
  \left(\frac{72}{35}\right)^2\frac{||q||}{||\bfc||^2}\le
  5\frac{||q||}{||\bfc||^2}.\]
 Moreover, parts (i) and (iii) yield
  \[||\bfd||\le ||q||/||\bfc||+\sqrt{2}|\mu|\sqrt{||q||}
  \le \left(1+\frac{72\sqrt{2}}{35}\right)\frac{||q||}{||\bfc||}\le
  4\frac{||q||}{||\bfc||}.\]
  We therefore conclude that
  \[||\bfa||\le|\lambda|\cdot||\bfc||+||\bfd||
  \le 9\frac{||q||}{||\bfc||},\]
  which suffices for the theorem.

\section{Deduction of Theorem \ref{t2}}\label{pt2}
Theorem \ref{t2} will follow from Theorem \ref{t3}. We have
\[M_k^TQM_k=D_k\left(\begin{array}{rrr} 0 & 0 & 1\\ 0 & -2 & 0\\
  1 & 0 & 0\end{array}\right).\]
  In general one has $M\Adj(M)=\det(M)I$, so that
  \[\det(M_k)^2 Q=D_k A^T\left(\begin{array}{rrr} 0 & 0 & 1\\ 0 & -2 & 0\\
    1 & 0 & 0\end{array}\right) A,\]
    with $A=\Adj(M_k)$.  It follows that
    \[J(A\x)=\det(M_k)^2D_k^{-1}q(\x),\]
    where
    \beql{Inv}
    AM_k=\det(M_k)I.
    \eeq

    We may now apply Theorem \ref{t3}, which provides a matrix
$U\in\Aut(J)$ such that the rows $\bfa,\bfb,\bfc$ of $UA$ satisfy
    \beql{9i}
    ||\bfa||\cdot||\bfc||\le 9\det(M_k)^2D_k^{-1}||q||
    \eeq
    and
    \beql{10i}
    ||\bfb||^2\le 10\det(M_k)^2D_k^{-1}||q||.
    \eeq
    If the columns of $M_kU^{-1}$  are $\bfc_1,\bfc_2,\bfc_3$, and $U_2$
    is given by (\ref{U0d}), then the columns of 
    $M_kU^{-1}U_2^{-1}$ are $\bfc_3, -\bfc_2,\bfc_1$, while the rows
    of $U_2UA$ are $\bfc,-\bfb,\bfa$. Thus we are free to replace $U$
    by $U_2U$ if we wish.
    We may therefore suppose without loss of generality that the columns
    of $M_kU^{-1}$ have $||\bfc_1||\le||\bfc_3||$. We may also
replace $U$ by $-U$, which will not affect the properties
(\ref{9i}) and (\ref{10i}) or the lengths $||\bfc_1||$ and
$||\bfc_3||$. Thus we may also suppose without loss of generality
that $\det(U)=+1$. 

Having suitably modified $U$ we still have 
\[J(UA\x)=\det(M_k)^2D_k^{-1}q(\x),\]
so that
\[(UA)^T\left(\begin{array}{rrr} 0 & 0 & 1\\ 0 & -2 & 0\\
  1 & 0 & 0\end{array}\right)UA=\det(M_k)^2D_k^{-1}Q.\]
  We now claim that we may replace $M_k$ by $N_k=M_kU^{-1}$ in 
  Theorem~ \ref{t1}. Part (i) of the theorem obviously remains true.
  Since $\det(N_k)=\det(M_k)$ the second and sixth  assertions of Theorem
\ref{t1} are immediate.  Moreover
\[q(N_k\x)=q\big(M_k(U^{-1}\x)\big)=D_kJ(U^{-1}\x)=D_kJ(\x),\]
giving us the fourth assertion, and also the fifth since the value of $D_k$ is the same for $N_k$ as it was for $M_k$. Finally, $U^{-1}(\Z^3)=\Z^3$ since
$\det(U)=1$, whence $N_k(\Z^3)=M_k(\Z^3)$. This suffices for the
third assertion of the theorem.

We proceed to consider the rows of $N_k^{-1}$. Since $N_k=M_kU^{-1}$, we have
\[\Adj(N_k)=\Adj(U^{-1})\Adj(M_k)=UA=B.\]
Thus
\[N_k^{-1}=\det(N_k)^{-1}\Adj(N_k)=\det(N_k)^{-1}B.\]
Since $\det(N_k)=\det(M_k)$ the first pair of inequalities in
Theorem \ref{t2} now follow from (\ref{9i}) and (\ref{10i}).

To handle the columns $\b{c}_i$ of $N_k$ we begin with the observation that
\[N_k=\det(N_k)\Adj(N_k^{-1}).\]
If the rows of $N_k^{-1}$ are $\b{r}_1=\b{u}$, $\b{r}_2=\b{v}$ and
$\b{r}_3=\b{w}$, then the first column of $\Adj(N_k^{-1})$ will be
\[\big(v_2w_3-v_3w_2,v_3w_1-v_1w_3,v_1w_2-v_2w_1\big)^T,\]
and hence will have Euclidean length at most
$||\mathbf{v}||\cdot||\mathbf{w}||$.  It follows that
\beql{c1E}
||\b{c}_1||\le \det(N_k)||\b{r}_2||\cdot||\b{r}_3||,
\eeq
and similarly that
\beql{c23E}
||\b{c}_2||\le \det(N_k)||\b{r}_1||\cdot||\b{r}_3||\;\;\;\mbox{and}
\;\;\; ||\b{c}_3||\le \det(N_k)||\b{r}_1||\cdot||\b{r}_2||.
\eeq
Thus (\ref{c1E}) and the second part of (\ref{c23E}) yield
\[||\b{c}_1||\cdot||\b{c}_3||\le
\det(N_k)^2||\b{r}_1||\cdot||\b{r}_2||^2||\b{r}_3||,\]
so that the first inequality of (\ref{t213}) follows from
(\ref{t2crE}).  The second part of (\ref{t213}) is then a consequence
of the fact that $\det(N_k)\mid D_k$, as noted in Theorem \ref{t1}.
To establish (\ref{t22}) we merely combine the first part of (\ref{c23E}) with
(\ref{t2crE}),  and again use the fact that $\det(N_k)\mid D_k$. This
completes the proof of Theorem \ref{t2}.

\section{Proof of Theorem \ref{t5}}

The matrix (\ref{QD}) has three real eigenvalues, whose product is
 $\det(Q)=2\Delta$. For any vector $\x$ we have 
 $||Q\x||\le  ||Q||\cdot ||\x||$, so that if $\lambda$ is an eigenvalue we 
 must have $|\lambda|\le||Q||$. Since $||q||$ is defined to be $||Q||$ 
 we conclude that $2\Delta\le||q||^3$, giving us the required 
 bound $\rho(q)\ge 2$.
 
  If $\z_1,\z_2\in\Z^3$ are linearly independent zeros of $q$ from the
  same class class $\cl{C}_k$, then $q(\z_1+\z_2)$ cannot vanish, since
  a non-singular conic cannot have three collinear zeros.  However
  $\z_1+\z_2$ will be in $M_k(\Z^3)$ so that we
  must have $D_k\mid q(\z_1+\z_2)$ by (\ref{fun}). It follows that
  $q(\z_1+\z_2)-q(\z_1)-q(\z_2)$ is a non-zero multiple of $D_k$.
  Recalling the definition (\ref{QD}) of the matrix $Q$ of $q$ we see that
  $q(\z_1+\z_2)-q(\z_1)-q(\z_2)=\z_1^TQ\z_2$. We therefore find that
  \[D_k\le|q(\z_1+\z_2)-q(\z_1)-q(\z_2)|=|\z_1^TQ\z_2|\le
  ||\z_1||\cdot||Q||\cdot||\z_2||,\]
  and hence that $||\z_1||\cdot||\z_2||\ge D_k/||q||$.  This gives us
  the second assertion of the theorem.

  Next, if $\b{c}_1$ is not a scalar multiple of $\z_1$ we will have
  \[||\b{c}_1||\ge ||\z_2||\ge||\z_1||. \]
  This would lead to the  inequalities
  \[D_k/||q||\le ||\z_1||\cdot||\z_2||\le||\b{c}_1||^2
< \rho^{-1}\det(M_k)^2D_k^{-2}||q||^2.\]
  We then have a contradiction, by virtue of (\ref{MD}). This
  establishes the third claim of the theorem.

  Finally, since $||\b{c}_3||\ge||\z_2||$ we have
  \begin{eqnarray*}
   ||\b{c}_1||&\le& 90\frac{\det(M_k)^2D_k^{-2}||q||^2}{||\b{c}_3||}\\
&  \le& 90\frac{\det(M_k)^2D_k^{-2}||q||^2}{||\z_2||}\\
&  \le& 90\frac{\det(M_k)^2D_k^{-2}||q||^2}{D_k/||q||}||\z_1||\\
   &  =&90\frac{||q||^3}{\Delta}||\z_1||,
   \end{eqnarray*}
  by (\ref{t213}), (\ref{z1z2}) and (\ref{MD}). Similarly, since
  $||\b{c}_1||\ge||\z_1||$ we have  
  \begin{eqnarray*}
   ||\b{c}_3||&\le& 90\frac{\det(M_k)^2D_k^{-2}||q||^2}{||\b{c}_1||}\\
&  \le& 90\frac{\det(M_k)^2D_k^{-2}||q||^2}{||\z_1||}\\
&  \le& 90\frac{\det(M_k)^2D_k^{-2}||q||^2}{D_k/||q||}||\z_2||\\
   &  =&90\frac{||q||^3}{\Delta}||\z_2||.
   \end{eqnarray*}
 This completes our proof of Theorem \ref{t5}.

\section{Preliminaries for the proof of Theorem \ref{t4}}

If $M=M_k$ we see from Theorem \ref{t1} that
\[N(B;\mathcal{C}_k)=\frac{1}{2}
\sum_{\substack{\u\in\Z^2\\ M\u^2\mathrm{primitive}}}\left\{w(B^{-1}M\u^2)
+w(-B^{-1}M\u^2)\right\},\]
where we write
\[M\u^2=M\left(\begin{array}{c} u_1^2\\ u_1u_2\\ u_2^2\end{array}\right)
  \;\;\; \mbox{when}\;\; \u=(u_1,u_2),\]
  for notational convenience.  It is thus also convenient to set
  $w_+(\x)=w(\x)+w(-\x)$ so that $w_+$ is an even function, supported
  on the set $||\x||\le 1$. We then have
  \[N(B;\mathcal{C}_k)=\frac{1}{2}
  \sum_{\substack{\u\in\Z^2\\ M\u^2\mathrm{primitive}}}w_+(B^{-1}M\u^2).\]

We begin by considering the condition that $M\u^2$ should be
primitive. Our goal is the following result.
\begin{lemma}\label{lprim}
    The set
  of primes may be partitioned into sets
  $\mathcal{P}_0,\mathcal{P}_1, \mathcal{P}_2$ with the following properties.
  Firstly, if $p\nmid\Delta$ then $p\in\mathcal{P}_0$.  Secondly, if
  $p\in\mathcal{P}_0$ then $p\mid M\u^2$ if and only if $p\mid\u$.
  Thirdly, if
  $p\in\mathcal{P}_n$ for $n=1$ or 2 then there are distinct lattices
  $\Lambda_i(p)\subseteq\Z^2$ for $1\le i\le n$ having determinant
  $p$, and such that $p\mid M\u^2$ if and only if
  $\u$ lies in one of the lattices $\Lambda_i(p)$. Finally, if
  $p||\Delta$ then $p\in\mathcal{P}_1$.
\end{lemma}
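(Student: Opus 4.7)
The plan is to analyse the condition $p \mid M\u^2$ one prime at a time. Writing $\overline{M}$ for $M \bmod p$ and $K = \ker(\overline{M}) \subseteq \mathbb{F}_p^3$, the condition becomes $\nu(\u) \in K$, where $\nu(u_1,u_2) = (u_1^2, u_1u_2, u_2^2)$ is the Veronese parametrisation of the conic $J = 0$. For any $p$ with $p \nmid \det(M)$ one has $K = \{0\}$, so $\nu(\u) \in K$ forces $\u \equiv 0 \pmod p$; moreover $\Delta\det(M)^2 = D_k^3$ combined with $D_k \mid \Delta$ (Theorem \ref{t1}(v)) gives $p \mid \Delta \iff p \mid \det(M)$, so every $p \nmid \Delta$ is of this type and we place all such primes in $\mathcal{P}_0$.

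Now suppose $p \mid \det(M)$ and let $r = \mathrm{rank}(\overline{M})$. The non-emptiness of $\cl{C}_k$ rules out $r = 0$: otherwise $p$ would divide every $M\u^2$, leaving no primitive zeros in the class. Thus $r \in \{1,2\}$. If $r = 2$, then $K = \mathbb{F}_p \b{k}$ is one-dimensional and any solution $\nu(\u) = t\b{k}$ satisfies $t^2 J(\b{k}) = J(\nu(\u)) = 0$; so if $J(\b{k}) \neq 0$ one is forced to $\u = 0$ and $p \in \mathcal{P}_0$, while if $J(\b{k}) = 0$ one may write $\b{k}$ as a scalar multiple of $\nu(v_1,v_2)$ for some nonzero $(v_1,v_2) \in \mathbb{F}_p^2$, and a direct verification shows that the solution set is precisely the one-dimensional subspace $\mathbb{F}_p \cdot (v_1,v_2)$, lifting to a single sublattice $\Lambda_1(p) \subseteq \Z^2$ of determinant $p$, so $p \in \mathcal{P}_1$. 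If instead $r = 1$, then $K = \ker L$ for a nonzero linear form $L$ on $\mathbb{F}_p^3$, and $\ell(\u) := L(\nu(\u))$ is a nonzero binary quadratic form (its three coefficients are precisely those of $L$, and a line cannot contain the Veronese conic). This $\ell$ has $0$, $1$, or $2$ distinct linear factors over $\mathbb{F}_p$, producing the correspondingly many distinct one-dimensional subspaces of $\mathbb{F}_p^2$ and hence that many distinct sublattices of determinant $p$ in $\Z^2$; place $p$ in $\mathcal{P}_0$, $\mathcal{P}_1$, or $\mathcal{P}_2$ accordingly.

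It remains to show that $p \| \Delta$ always lands in $\mathcal{P}_1$. By Theorem \ref{t1}(v),(vi) such a $p$ satisfies $v_p(\det M) = v_p(D_k) = 1$. Now a $3\times 3$ matrix of rank at most $1$ modulo $p$ has every $2\times 2$ minor divisible by $p$, hence determinant divisible by $p^2$; so $v_p(\det M) = 1$ forces $r = 2$. Writing $K = \mathbb{F}_p \b{k}$ and lifting $\b{k}$ to $\tilde{\b{k}} \in \Z^3$, the identity $q(M\tilde{\b{k}}) = D_k J(\tilde{\b{k}})$ together with $p \mid M\tilde{\b{k}}$ yields $p^2 \mid D_k J(\tilde{\b{k}})$; since $v_p(D_k) = 1$ this gives $J(\b{k}) \equiv 0 \pmod p$, placing $p$ in the $\mathcal{P}_1$ subcase above. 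The main nuisance in executing the plan is carrying out the Veronese calculation uniformly over $\mathbb{F}_p$ (including $p = 2$, where squaring is the Frobenius), and confirming that in the $\mathcal{P}_2$ case the two lattices really are distinct; both reduce to inspecting the distinct linear factors of $\ell$.
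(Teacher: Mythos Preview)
Your proof is correct and follows the same overall architecture as the paper's: classify primes by the rank of $\overline{M}$ over $\mathbb{F}_p$, and in the rank~$1$ case reduce to the factorisation type of a binary quadratic form. Where you diverge is in the treatment of the rank~$2$ case and, more substantially, in the proof of the final clause $p\|\Delta\Rightarrow p\in\mathcal{P}_1$.

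For rank~$2$ the paper simply asserts that ``a similar analysis applies'', whereas you give a clean criterion: the kernel is $\mathbb{F}_p\b{k}$, and $p\in\mathcal{P}_1$ or $\mathcal{P}_0$ according as $J(\b{k})$ vanishes or not. For the clause $p\|\Delta$, the paper puts $M$ into one of three explicit upper-triangular shapes $R_1,R_2,R_3$ via row operations over $\Z_p$, computes $J(\Adj(R_i)\x)$ in each case, and rules out or handles each by inspection. Your argument is shorter and more conceptual: $v_p(\det M)=1$ forces rank~$2$ via Smith normal form, and then applying the defining identity $q(M\tilde{\b{k}})=D_kJ(\tilde{\b{k}})$ to a lift of the kernel vector, together with $p^2\mid q(M\tilde{\b{k}})$ and $v_p(D_k)=1$, gives $J(\b{k})\equiv 0$ directly. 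This avoids any explicit matrix case analysis.

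One small point: your claimed equivalence $p\mid\Delta\iff p\mid\det(M)$ does not follow from $D_k\mid\Delta$ alone; you also need $D_k\mid\det(M_k)^2$ (still part of Theorem~\ref{t1}(v)) to force $v_p(D_k)=0$ when $v_p(\det M)=0$. This is a citation issue only; the equivalence holds and in any case you only use the direction $p\nmid\Delta\Rightarrow p\nmid\det M$, which is immediate.
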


\begin{proof}
If $p$ is a prime not dividing $\det(M)$ (and in particular
for any prime not dividing $\Delta$) the matrix $M$ will be
invertible modulo $p$, so that $p\mid M\u^2$ if and only if
$p\mid\u^2$.  In this case the condition that $p\mid M\u^2$ is
equivalent to $p\mid\u$, and $p$ will be in $\mathcal{P}_0$.
On the other hand, if $p\mid \det(M)$ then $M$
is singular modulo $p$.  It cannot vanish modulo $p$, since the class
$\cl{C}_k$ corresponding to $M=M_k$ is assumed to be non-empty, whence
$M$ has rank 1 or 2 modulo $p$. Suppose firstly that
$M$ has rank 1 over $\mathbb{F}_p$, with a non-zero row $(A,B,C)$
say.  If the quadratic form $Au^2+Buv+Cv^2$ is irreducible modulo $p$
then $p\mid M\u^2$ implies $p\mid\u$. In this case $p$ will be in
$\mathcal{P}_0$. If the form $Au^2+Buv+Cv^2$ splits into distinct factors
as $L_1(u,v)L_2(u,v)$ then $p\mid M\u^2$ if and only if $\u$ lies in
one or both of the lattices $\Lambda_1,\Lambda_2$ given by 
$p\mid L_i(u,v)$. In this case $p$ will be in $\mathcal{P}_2$.
On the other hand, if $Au^2+Buv+Cv^2$ has a repeated factor
$L(u,v)^2$, then one has $p\mid M\u^2$ if and only if $\u$ lies in the
lattice $\Lambda$ given by $p\mid L(u,v)$, so that
$p\in\mathcal{P}_1$. A similar analysis
applies when $M$ has rank 2 over $\mathbb{F}_p$, showing in this case
that the condition $p\mid M\u^2$ is either equivalent to
$\u\in\Lambda$ for some lattice $\Lambda\subset\Z^2$ of determinant
$p$, or is equivalent to $p\mid\u$.

Finally, suppose that $p||\Delta$. Then $p||\det(M)$, by part (iii) of
Theorem \ref{t1}, so that $M$ has rank 2 over $\mathbb{F}_p$. Using
row operations one sees that there is a matrix $U\in\mathrm{GL}_3(\Z_p)$
such that $UM=R$ takes one of the forms
\[R_1=\left(\begin{array}{ccc}1 & 0 & a\\ 0 & 1 & b\\ 0&0&
  p\end{array}\right)\;\;\mbox{or}\;\;
    R_2=\left(\begin{array}{ccc}1 & a & 0\\ 0 & p & 0\\ 0&0&
      1\end{array}\right)\;\;\mbox{or}\;\;
        R_3=\left(\begin{array}{ccc}p & 0 & 0\\ 0 & 1 & 0\\ 0&0&
  1\end{array}\right).\]
Then if $M=M_k$ and $D=D_k$ we have $p||D$ by (\ref{MD}), and 
\begin{eqnarray*}
J(\Adj(R)\x)&=&\det(M)^2J(R^{-1}\x)\\
&=&\det(M)^2D^{-1}q(MR^{-1}\x)\\
&=&\det(M)^2D^{-1}q(U^{-1}\x).
\end{eqnarray*}
Since $U$ is invertible modulo $p$ we conclude that $J(\Adj(R)\x)$
vanishes modulo $p$. When $R=R_1$ we have
\[\Adj(R_1)=\left(\begin{array}{ccc}p & 0 & -a\\ 0 & p & -b\\ 0&0&
  1\end{array}\right)\]
  so that
  \[J(\Adj(R_1)\x)=(px_1-ax_3)x_3-(px_2-bx_3)^2\equiv -(a+b^2)x_3^2\modd{p}.\]
In this case we conclude that $p\mid a+b^2$. Since $U$ is
invertible modulo $p$ the condition $p\mid M\u^2$ is equivalent to
$p\mid R\u^2$, and for $R=R_1$ this becomes
\[u_1^2+au_2^2\equiv u_1u_2+bu_2^2\equiv 0\modd{p}.\]
Since $a\equiv -b^2\modd{p}$ this holds precisely when
$p\mid u_1+bu_2$. Thus for $R=R_1$ there is a single lattice condition.

For $R=R_2$ we calculate that
\[J(\Adj(R_2)\x)=(px_1-ax_2)px_3-x_2^2,\]
which cannot
vanish identically modulo $p$.  This case is therefore forbidden. When
$R=R_3$ we see that $p\mid R\u^2$ if and only if $p\mid u_2$, which
again gives us a single lattice condition. Thus whenever $p||\Delta$
the condition $p\mid M\u^2$ gives us a single lattice condition with
determinant $p$.
\end{proof}

Lemma \ref{lprim} allows us to handle the primitiveness condition in
the definition of the sum $N(B;\mathcal{C}_k)$ as follows.
\begin{lemma}\label{lSlat}
  Suppose that $w(\x)$ is supported on the disc $||\x||\le 1$.
Then there is a square-free divisor $\Delta_1\Delta_2$ of $\Delta$, 
and lattices $\Lambda^{(1)},\ldots,\Lambda^{(J)}$, where
\[J=2^{\omega(\Delta_1)}3^{\omega(\Delta_2)},\]
with the following properties.  Firstly if $p||\Delta$ then
$p\mid\Delta_1$; secondly the determinant
$\dL(\Lambda^{(j)})$ divides
$\Delta_1\Delta_2^2$ for every index $j$; thirdly
\[N(B;\mathcal{C}_k)=\frac{1}{2}
\sum_{j=1}^J\lambda\left(\dL(\Lambda^{(j)})\right)
\sum_{\substack{d=1\\ (d,\Delta_1\Delta_2)=1}}^{\infty}\mu(d)
\sum_{\u\in\Lambda^{(j)}-\{\mathbf{0}\}}w_+(d^2B^{-1}M\u^2),\]
where $\lambda(n)=(-1)^{\Omega(n)}$ is the Liouville function; and fourthly
\[\sum_{j=1}^J\frac{\lambda\left(\dL(\Lambda^{(j)})\right)}{\dL(\Lambda^{(j)})}
\sum_{\substack{d=1\\ (d,\Delta_1\Delta_2)=1}}^{\infty}\frac{\mu(d)}{d^2}=
\frac{6}{\pi^2}\prod_{p\mid \Delta_1}\frac{1}{1+p^{-1}}
\prod_{p\mid \Delta_2}\frac{1-p^{-1}}{1+p^{-1}}.\]
\end{lemma}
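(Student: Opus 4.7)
The plan is to perform a Möbius-style inversion of the primitivity condition on $M\u^2$, organized around the prime partition $\cl{P}_0 \sqcup \cl{P}_1 \sqcup \cl{P}_2$ from Lemma \ref{lprim}. Set $\Delta_1 = \prod_{p \in \cl{P}_1} p$ and $\Delta_2 = \prod_{p \in \cl{P}_2} p$; both are squarefree divisors of $\Delta$, since every prime in $\cl{P}_1 \cup \cl{P}_2$ must divide $\Delta$, and the fourth clause of Lemma \ref{lprim} forces $p \mid \Delta_1$ whenever $p||\Delta$. I would start from the multiplicative identity
\[\one_{M\u^2\text{ primitive}} = \prod_p (1 - \one_{p \mid M\u^2})\]
and handle the three classes separately.

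For $p \in \cl{P}_0$ the factor is $1 - \one_{p \mid \u}$, and assembling all such factors gives, by standard Möbius inversion, the contribution $\sum_{(d, \Delta_1\Delta_2) = 1}\mu(d)\one_{d \mid \u}$. For $p \in \cl{P}_1$ the factor is simply $1 - \one_{\u \in \Lambda(p)}$, contributing two terms. For $p \in \cl{P}_2$, inclusion--exclusion gives
\[1 - \one_{\u \in \Lambda_1(p) \cup \Lambda_2(p)} = 1 - \one_{\u \in \Lambda_1(p)} - \one_{\u \in \Lambda_2(p)} + \one_{\u \in \Lambda_1(p) \cap \Lambda_2(p)}.\]
Since $\Lambda_1(p)$ and $\Lambda_2(p)$ are distinct index-$p$ sublattices of $\Z^2$, we have $\Lambda_1(p) + \Lambda_2(p) = \Z^2$, so $\Lambda_1(p) \cap \Lambda_2(p)$ has determinant $p^2$. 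Expanding the finite product over $p \in \cl{P}_1 \cup \cl{P}_2$ and applying the Chinese Remainder Theorem, each resulting term is indexed by a sublattice $\Lambda^{(j)} \subseteq \Z^2$ with determinant dividing $\Delta_1\Delta_2^2$.

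The critical step is to verify that the sign attached to $\Lambda^{(j)}$ is exactly $\lambda(\dL(\Lambda^{(j)}))$. This reduces to a prime-by-prime check: for $p \in \cl{P}_1$ the two choices give determinant/sign pairs $(1, +1)$ and $(p, -1)$, matching $\lambda(1)$ and $\lambda(p)$; for $p \in \cl{P}_2$ the four choices give $(1, +1)$, $(p, -1)$, $(p, -1)$, $(p^2, +1)$, matching $\lambda$ on each respective determinant. Plugging this expansion back into the opening formula for $N(B;\cl{C}_k)$ and changing the order of summation yields the third assertion. The fourth claim is then an Euler product calculation: $\sum_{(d,\Delta_1\Delta_2)=1}\mu(d)/d^2 = \frac{6}{\pi^2}\prod_{p\mid\Delta_1\Delta_2}(1-p^{-2})^{-1}$, while the sum over $j$ factorizes across primes in $\cl{P}_1 \cup \cl{P}_2$, each $p \mid \Delta_1$ contributing $1 - p^{-1}$ and each $p \mid \Delta_2$ contributing $1 - 2p^{-1} + p^{-2} = (1-p^{-1})^2$; combining these via $1 - p^{-2} = (1-p^{-1})(1+p^{-1})$ produces the stated Euler product. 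The main obstacle is the careful sign bookkeeping needed to recognise the Liouville function from the signed lattice expansion; once that identification is secured, the remaining steps are routine.
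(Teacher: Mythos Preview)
Your proposal is correct and follows essentially the same approach as the paper's proof: the same definition of $\Delta_1,\Delta_2$ via the partition $\cl{P}_0\sqcup\cl{P}_1\sqcup\cl{P}_2$, the same prime-by-prime inclusion--exclusion for the lattices, and the same M\"obius handling of the $\cl{P}_0$-primes. You actually supply more detail than the paper on two points --- the justification that $\Lambda_1(p)\cap\Lambda_2(p)$ has determinant $p^2$ (via $\Lambda_1(p)+\Lambda_2(p)=\Z^2$) and the Euler-product computation for the fourth assertion, which the paper dismisses as ``clear, by multiplicativity.''
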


\begin{proof}
For the proof we use the notation $\one(A)$ for the characteristic
function for the property $A$.  We begin by observing that
\[\one(p\nmid M\u^2)=1-\one(p\mid\u),\;\;\; p\in\mathcal{P}_0,\]
\[\one(p\nmid M\u^2)=1-\one(\u\in\Lambda_1(p)),\;\;\; p\in\mathcal{P}_1,\]
and finally,
\[\one(p\nmid M\u^2)=1-\one(\u\in\Lambda_1(p))-\one(\u\in\Lambda_2(p))
+\one(\u\in\Lambda_1(p)\cap\Lambda_2(p)),\]
when $p\in\mathcal{P}_2$.
We now take $\Delta_1$ to be the product of the primes in $\mathcal{P}_1$
and $\Delta_2$ to be the product of the primes in $\mathcal{P}_2$, so that
$\Delta_1\Delta_2\mid \Delta$. Let $\Lambda^{(j)}$ (for $1\le j\le J$) run over
all lattices formed by the intersection of none, some, or all, of the
lattices $\Lambda_i(p)$ (for $i=1$ or 2 and
$p\in\mathcal{P}_1\cup\mathcal{P}_2$).  Then
$J=2^{\omega(\Delta_1)}3^{\omega(\Delta_2)}$, and
\[\one\left((\Delta_1\Delta_2,M\u^2)=1\right)=
\sum_{j=1}^J\lambda\left(\dL(\Lambda^{(j)})\right)\one(\u\in\Lambda^{(j)}),\]
since $\Lambda_1(p)\cap\Lambda_2(p)$
will have determinant $p^2$ when $p\mid \Delta_2$.
The conditions $p\nmid M\u^2$ for primes $p\in\mathcal{P}_0$ are
produced by
\[\sum_{\substack{d\mid\u\\ (d,\Delta_1\Delta_2)=1}}\mu(d),\]
so that
\begin{eqnarray*}
  N(B;\mathcal{C}_k)&=&\frac{1}{2}
  \sum_{\substack{\u\in\Z^2\\ M\u^2\mathrm{primitive}}}
  w_+\left(\frac{M}{B}\u^2\right)\\
&=&\frac{1}{2}
  \sum_{\substack{\u\in\Z^2-\{\mathbf{0}\}\\ M\u^2\mathrm{primitive}}}
  w_+\left(\frac{M}{B}\u^2\right)\\
&=&\frac{1}{2} \sum_{j=1}^J\lambda\left(\dL(\Lambda^{(j)})\right)
  \sum_{\substack{d=1\\ (d,\Delta_1\Delta_2)=1}}^{\infty}\mu(d)
  \sum_{\substack{\u\in\Lambda^{(j)}-\{\mathbf{0}\}\\ d\mid\u}}w_+\left(\frac{M}{B}\u^2\right)\\
  &=&\frac{1}{2}
  \sum_{j=1}^J\lambda\left(\dL(\Lambda^{(j)})\right)
\sum_{\substack{d=1\\ (d,\Delta_1\Delta_2)=1}}^{\infty}\mu(d)
\sum_{\u\in\Lambda^{(j)}-\{\mathbf{0}\}}w_+\left(d^2\frac{M}{B}\u^2\right)
\end{eqnarray*}
as required. Here we should note that the $d$-summation is finite for
all relevant $\u$.

The final part is clear, by multiplicativity.
\end{proof}

In light of Lemma \ref{lSlat} our focus moves to sums of the shape
\[S(\Lambda,B,M_k)=\sum_{\x\in\Lambda-\{\mathbf{0}\}}w_+(B^{-1}M_k\x^2),\]
where $\Lambda$ is an integer lattice, $w_+$ is an even weight supported in
the disc $||\x||\le 1$, and $M_k$ is an integer matrix of
the shape described in Theorems \ref{t1} and \ref{t2}.
We first need to understand the range of summation in $S(\Lambda,B,M_k)$.

\begin{lemma}\label{IE}
Let $M$ be one of the matrices $M_k$, as described in Theorem
\ref{t2}. Let $X_1=\sqrt{B||\b{r}_1||}$ and $X_2=\sqrt{B||\b{r}_3||}$,
so that
\[X_1X_2\le 3BD_k^{-1/2}||q||^{1/2}.\]
Then if $w_+(B^{-1}M\x^2)\not=0$ with $\x\in\R^2$
we have both $|x_1|\le X_1$ and $|x_2|\le X_2$. Moreover if
$w_+(B^{-1}M\x^2)\not=0$ with $\x\in\Z^2$ we have $B\ge 1$.
\end{lemma}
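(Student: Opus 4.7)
The plan is to read off all three assertions directly from the shape of $M\x^2$, combined with the bounds from Theorem \ref{t2} and the support condition on $w_+$.

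First, the bound $X_1X_2\le 3BD_k^{-1/2}\|q\|^{1/2}$ is immediate from the definitions: $X_1X_2 = B\sqrt{\|\b{r}_1\|\cdot\|\b{r}_3\|}$, and Theorem \ref{t2} gives $\|\b{r}_1\|\cdot\|\b{r}_3\|\le 9D_k^{-1}\|q\|$, so taking square roots produces the claim with the constant $3$.

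For the coordinate bounds, I would use that $w_+$ is supported on the unit ball, so the hypothesis $w_+(B^{-1}M\x^2)\neq 0$ forces $\|M\x^2\|\le B$. The key step is then to invert $M$: writing $\y = M\x^2$, the identity $\x^2 = M^{-1}\y$ expresses the three entries of $\x^2$ as inner products against the rows of $M^{-1}$. In particular $x_1^2 = \b{r}_1\cdot\y$ and $x_2^2 = \b{r}_3\cdot\y$, and Cauchy--Schwarz together with $\|\y\|\le B$ gives $x_1^2\le\|\b{r}_1\|\cdot\|\y\|\le B\|\b{r}_1\|=X_1^2$, and likewise $x_2^2\le X_2^2$, as required.

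Finally, for $B\ge 1$: if $\x\in\Z^2$ is nonzero then $\x^2$ is a nonzero integer vector in $\Z^3$, and since $M$ is a nonsingular integer matrix, $M\x^2$ is again a nonzero integer vector, so $\|M\x^2\|\ge 1$. Combined with the inequality $\|M\x^2\|\le B$ already established, this forces $B\ge 1$. (The degenerate case $\x=\mathbf{0}$ does not arise in the sums we intend to apply this to, which run over $\Lambda-\{\mathbf{0}\}$.) There is no real obstacle in this argument; the only conceptual point worth isolating is that the rows of $M^{-1}$ are the natural objects for separately controlling $|x_1|$ and $|x_2|$ from the single bound $\|M\x^2\|\le B$, and the inequality $\|\b{r}_1\|\cdot\|\b{r}_3\|\le 9D_k^{-1}\|q\|$ from Theorem \ref{t2} is exactly what makes the product $X_1X_2$ small enough for later use.
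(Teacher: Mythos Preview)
Your proof is correct and follows essentially the same route as the paper's: both set $\y$ equal to (a scaling of) $M\x^2$, use the support condition to bound $\|\y\|$, then read off $x_1^2$ and $x_2^2$ as the first and third entries of $M^{-1}\y$ via Cauchy--Schwarz against $\b{r}_1$ and $\b{r}_3$; the $B\ge 1$ step and the $X_1X_2$ bound are handled identically. The only cosmetic difference is that the paper normalizes $\y=B^{-1}M\x^2$ with $\|\y\|\le 1$, whereas you take $\y=M\x^2$ with $\|\y\|\le B$.
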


\begin{proof}
We set $B^{-1}M\x^2=\y$, so that $||\y||\le 1$ if $w_+(B^{-1}M\x^2)\not=0$.  
If $M^{-1}$ has rows $\b{r}_1,\b{r}_2,\b{r}_3$, as in Theorem
\ref{t2}, then
\[|(M^{-1}\y)_1|=|\b{r}_1^T\y|\le ||\b{r}_1||\cdot||\y||\le
||\b{r}_1||,\]
so that $x_1^2\le B||\b{r}_1||$.  Similarly $x_2^2\le B||\b{r}_3||$,
and the first result follows. If $\x\in\Z^2-\{\mathbf{0}\}$ with
$w_+(B^{-1}M\x^2)\not=0$ we have $B^{-1}||M\x^2||\le 1$. Since $\x^2$
does not vanish we see that $M\x^2$ must be a non-zero integer vector,
since $M$ is nonsingular.  It follows that $||M\x^2||\ge 1$, whence
$B\ge 1$ as claimed.
\end{proof}

We now give a crude bound for $S(\Lambda,B,M_k)$.
\begin{lemma}\label{CB}
  We have $S(\Lambda,B,M_k)=0$ if $B<1$, and otherwise
  \[S(\Lambda,B,M_k)\ll_wD_k^{-1/2}||q||^{1/2}\left\{\frac{B}{\dL(\Lambda)}+
  B^{1/2}||\b{c}_3||^{1/2}\right\}.\]
\end{lemma}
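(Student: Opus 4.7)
The case $B<1$ is immediate from the final assertion of Lemma \ref{IE}.

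For $B\ge 1$, the plan is to bound $w_+$ trivially by its supremum and count the lattice points of $\Lambda$ which can contribute. By Lemma \ref{IE}, every $\x\in\Lambda-\{\mathbf{0}\}$ with $w_+(B^{-1}M_k\x^2)\ne 0$ lies in the rectangle $R=[-X_1,X_1]\times[-X_2,X_2]$, whose area satisfies $4X_1X_2\le 12BD_k^{-1/2}||q||^{1/2}$. I would then apply the standard two-dimensional lattice-point counting estimate
\[\card(\Lambda\cap R)\ll 1+\frac{X_1X_2}{\dL(\Lambda)}+\frac{X_1+X_2}{\lambda_1(\Lambda)},\]
where $\lambda_1(\Lambda)$ denotes the length of a shortest non-zero vector of $\Lambda$. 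Since $\Lambda\subseteq\Z^2$, one has $\lambda_1(\Lambda)\ge 1$; and whenever $S(\Lambda,B,M_k)\ne 0$, the rectangle $R$ contains an element of $\Z^2-\{\mathbf{0}\}$, which forces $\max(X_1,X_2)\ge 1$, so that the ``$+1$'' may be absorbed into the $X_1+X_2$ term.

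What then remains is to rewrite $X_1+X_2$ in the shape appearing in the lemma. The main step is to bound $\max(||\b{r}_1||,||\b{r}_3||)$ by a multiple of $D_k^{-1}||q||\cdot||\b{c}_3||$. Using $M_k^{-1}=\Adj(M_k)/\det(M_k)$ together with the cross-product description of the rows of the adjugate of a $3\times 3$ matrix,
\[\b{r}_1=\frac{\b{c}_2\times\b{c}_3}{\det(M_k)}\andd\b{r}_3=\frac{\b{c}_1\times\b{c}_2}{\det(M_k)},\]
and the elementary inequality $||\b{u}\times\b{v}||\le||\b{u}||\cdot||\b{v}||$, the estimates $||\b{c}_2||\le 9\det(M_k)D_k^{-1}||q||$ and $||\b{c}_1||\le||\b{c}_3||$ from Theorem \ref{t2} give both $||\b{r}_1||$ and $||\b{r}_3||$ at most $9D_k^{-1}||q||\cdot||\b{c}_3||$. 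Hence
\[X_1+X_2\le 2\sqrt{B\max(||\b{r}_1||,||\b{r}_3||)}\ll D_k^{-1/2}||q||^{1/2}B^{1/2}||\b{c}_3||^{1/2},\]
and combining with $X_1X_2/\dL(\Lambda)\ll BD_k^{-1/2}||q||^{1/2}/\dL(\Lambda)$ produces the claimed bound.

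The only genuinely non-routine step is invoking the two-dimensional lattice-point counting estimate in exactly the form above, but this is well known; the remainder is direct substitution of the inequalities already proved in Lemma \ref{IE} and Theorem \ref{t2}.
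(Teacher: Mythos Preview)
Your proof is correct and follows essentially the same strategy as the paper: bound $S$ by a lattice-point count in the box $[-X_1,X_1]\times[-X_2,X_2]$, use that $\Lambda\subseteq\Z^2$ forces the shortest vector to have length at least $1$, absorb the $+1$ when $\max(X_1,X_2)\ge 1$, and then convert the resulting $\max(X_1,X_2)$ into the stated form via a bound of the type $||\b{r}_i||\ll D_k^{-1}||q||\,||\b{c}_3||$. The only notable difference is in this last step: the paper obtains the bound on $||\b{r}_1||$ and $||\b{r}_3||$ from the identities $\b{r}_i\cdot\b{c}_i=1$ together with the row-product inequality $(\ref{t2crE})$, whereas you use the cross-product formulas $\b{r}_1=\det(M_k)^{-1}\b{c}_2\times\b{c}_3$, $\b{r}_3=\det(M_k)^{-1}\b{c}_1\times\b{c}_2$ together with the column bound $(\ref{t22})$ --- an equally valid and arguably more direct route.
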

\begin{proof}
  The first claim is obvious, given Lemma \ref{IE}.  Generally
\[S(\Lambda,B,M_k)\ll_w \card\{\x\in\Lambda:\,|x_1|\le X_1,\,
    |x_2|\le X_2\}.\]
    If we set
    \[\Lambda_0=\{(x_1/X_1,x_2/X_2):\,(x_1,x_2)\in\Lambda\},\]
    then $\dL(\Lambda_0)=\dL(\Lambda)/X_1X_2$, and
    \[S(\Lambda,B,M_k)\ll_w \card\{\y\in\Lambda_0:\,||\y||_{\infty}\le 1\}.\]
    Thus
    \[S(\Lambda,B,M_k)\ll_w \dL(\Lambda_0)^{-1}+\lambda_1^{-1}+1
    \ll X_1X_2\dL(\Lambda)^{-1}+\lambda_1^{-1}+1,\]
    where $\lambda_1$ is the length of the shortest non-zero vector in
    $\Lambda_0$. However one has $||\x||\ge 1$ for every non-zero
    vector in $\Lambda$, and hence
    \[\lambda_1\ge\max(X_1,X_2)^{-1}.\]
    We therefore obtain the bound
    \[S(\Lambda,B,M_k)\ll_w \frac{X_1X_2}{\dL(\Lambda)}+\max(X_1,X_2)+1.\]
    If $\max(X_1,X_2)\le \tfrac12$ and $\x$ is an integer vector for which
    $w_+(B^{-1}M\x^2)$ is non-zero, then we must have $\x=\mathbf{0}$,
    which is excluded
    from the sum $S(\Lambda,B,M_k)$.  Thus $S(\Lambda,B,M_k)=0$ when
$\max(X_1,X_2)\le\tfrac12$. It therefore follows that
\begin{eqnarray*}
S(\Lambda,B,M_k)&\ll_w&\frac{X_1X_2}{\dL(\Lambda)}+\max(X_1,X_2)\\
&\ll_w& D_k^{-1/2}||q||^{1/2}\frac{B}{\dL(\Lambda)}+
  B^{1/2}\max\{||\b{r}_1||\,,\,||\b{r}_3||\}^{1/2}.
  \end{eqnarray*}
    We now claim that
  \beql{cln}
  ||\b{r}_1||\ll \frac{||q||}{D_k}||\b{c}_3||
  \;\;\;\mbox{and}\;\;\; ||\b{r}_3||\ll
  \frac{||q||}{D_k}||\b{c}_1||.
\eeq
Clearly this will suffice for the lemma, since we have chosen $M_k$ so
that $||\b{c}_1||\le||\b{c}_3||$.

To prove (\ref{cln}) we begin with the observation that the scalar product
$\b{r}_1.\b{c}_1$ takes the value $1$, since $M^{-1}M=I$. Similarly we have
$\b{r}_3.\b{c}_3=1$. It follows that
\beql{alp}
||\b{r}_1||\cdot||\b{c}_1||\ge 1\;\;\;\mbox{and}\;\;\;
  ||\b{r}_3||\cdot||\b{c}_3||\ge 1.
  \eeq
Thus
\[ ||\b{r}_1||\le||\b{r}_1||\cdot||\b{r}_3||\cdot||\b{c}_3||,\]
so that the first part of (\ref{cln}) follows from (\ref{t2crE}).
The second part may be proved entirely analogously.
\end{proof}

\section{Theorem \ref{t4} --- The leading term}

To estimate $S(\Lambda,B,M_k)$ we will use the
following form of the Poisson summation formula. 

\begin{lemma}\label{poisson}
  Let $N\in$GL$_2(\R)$, so that $\Lambda=N(\Z^2)$ is a two-dimensional
  lattice. Then
  \[S(\Lambda,B,M_k)+w_+(\mathbf{0})
  =\dL(\Lambda)^{-1}\sum_{\bfa\in\Z^2}I(\bfa,M,N),\]
with
  \[I(\bfa,M,N)=
\int_{\R^2}w_+(B^{-1}M\x^2)e(-\bfa^TN^{-T}\x)dx_1dx_2.\]
\end{lemma}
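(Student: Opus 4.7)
The lemma is a direct application of the Poisson summation formula to the auxiliary function $g:\R^2\to\R$ defined by $g(\x)=w_+(B^{-1}M\x^2)$, summed over the lattice $\Lambda=N(\Z^2)$. The plan is as follows.

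First I would verify the analytic hypotheses. Since $w$ is $C^\infty$ with compact support, so is $w_+$, and the map $\x\mapsto M\x^2$ is polynomial from $\R^2$ into $\R^3$; hence $g\in C^\infty(\R^2)$. By Lemma \ref{IE} the support of $g$ is contained in the bounded box $[-X_1,X_1]\times[-X_2,X_2]$, so $g$ is compactly supported, hence Schwartz, and Poisson summation applies with no convergence issues on either side.

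Next I would reindex the sum over $\Lambda$ as a sum over $\Z^2$. Writing each $\x\in\Lambda$ uniquely as $\x=N\y$ with $\y\in\Z^2$, and setting $h(\y):=g(N\y)$, one has
\[\sum_{\x\in\Lambda}g(\x)=\sum_{\y\in\Z^2}h(\y)=g(\mathbf{0})+S(\Lambda,B,M_k)=w_+(\mathbf{0})+S(\Lambda,B,M_k)\]
after separating off the $\y=\mathbf{0}$ contribution.

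Finally I would apply the standard Poisson summation formula on $\Z^2$ to the Schwartz function $h$, giving $\sum_{\y\in\Z^2}h(\y)=\sum_{\bfa\in\Z^2}\hat{h}(\bfa)$. The Fourier coefficient is computed via the change of variables $\x=N\y$, whose Jacobian is $|\det N|=\dL(\Lambda)$; a brief calculation then identifies $\hat{h}(\bfa)$ with $\dL(\Lambda)^{-1}I(\bfa,M,N)$, the exponential phase arising from the pull-back of $-\bfa^T\y$ to the $\x$-variable and the determinant factor from the Jacobian. Rearranging yields the stated identity. There is no real obstacle here; the proof is routine bookkeeping around the change of variables and the identification of the dual lattice, once the smoothness and compact support of $g$ have been recorded.
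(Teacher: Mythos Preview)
Your proposal is correct and follows essentially the same route as the paper: define the auxiliary function, reindex the lattice sum as a sum over $\Z^2$ via $h(\y)=g(N\y)$, apply Poisson summation on $\Z^2$, and then change variables $\x=N\z$ in the resulting integrals to obtain the factor $\dL(\Lambda)^{-1}$ and the phase $e(-\bfa^TN^{-T}\x)$. The only difference is that you spell out the analytic hypotheses (smoothness and compact support of $g$, via Lemma~\ref{IE}) more explicitly than the paper does.
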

\begin{proof}
  Writing $\varpi(\x)=w_+(B^{-1}M\x^2)$, the Poisson summation formula yields
  \[S(\Lambda,B,M_k)+w_+(\mathbf{0})
  =\sum_{\y\in\Z^2}\varpi(N\y)=\sum_{\bfa\in\Z^2}
\int_{R^2}\varpi(N\z)e(-\bfa^T\z)dz_1dz_2.\]
If we substitute $\x=N\z$ we have $\bfa^T\z=\bfa^TN^{-T}\x$, and the
result follows since $|\det(N)|=\dL(\Lambda)$.
\end{proof}

The main term in Theorem \ref{t4}
will come from the integral with $\bfa=\mathbf{0}$.
\begin{lemma}\label{MT}
  Define $K_T(t)$ as in (\ref{KTD}).
Then if $q(M\x)=DJ(\x)$ as in Theorem \ref{t1} we have
\[\int_{\R^3}w(\y)K_T(q(\y))dy_1dy_2dy_3\to\frac{D^{1/2}}{\Delta^{1/2}}
\int_{\R^2}w_+(M\x^2)dx_1dx_2,\]
as $T\to\infty$, and hence
\[\int_{\R^2}w_+(B^{-1}M\x^2)dx_1dx_2=\sigma_{\infty}(q;w)
\frac{\Delta^{1/2}}{D^{1/2}}B,\]
where $\sigma_{\infty}(q;w)$ is given by (\ref{sid}).
\end{lemma}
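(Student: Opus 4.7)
The plan is to view $K_T$ as an approximation to the Dirac mass at $0$, so that $\int w\,K_T(q)\,d\y$ concentrates onto the conic $q=0$; pulling back by $M$ reduces this to an integral against (essentially) $\delta(DJ)$, which defines a Leray-type surface measure on the cone $J=0$, and the latter is naturally parametrized by $\u\mapsto\u^2=(u_1^2,u_1u_2,u_2^2)$.

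First I will substitute $\y=M\x$, using $\det(M)>0$ (Theorem \ref{t1}(ii)) and $q(M\x)=DJ(\x)$, to obtain
\[I_T:=\int_{\R^3}w(\y)K_T(q(\y))dy_1dy_2dy_3=\det(M)\int_{\R^3}w(M\x)K_T(DJ(\x))dx_1dx_2dx_3.\]
I split the $\x$-integral by the sign of $x_1$ (the hyperplane $x_1=0$ is null), and on $\{x_1>0\}$ replace $x_3$ by $s=DJ(\x)=D(x_1x_3-x_2^2)$; this gives $dx_3=ds/(Dx_1)$ and $x_3=x_2^2/x_1+s/(Dx_1)$, so the $\{x_1>0\}$ contribution equals
\[\frac{\det(M)}{D}\int_0^\infty\int_{\R}\int_{\R}K_T(s)\,w\!\left(M(x_1,x_2,x_2^2/x_1+s/(Dx_1))\right)\frac{ds\,dx_1\,dx_2}{x_1}.\]
For each fixed $(x_1,x_2)$ with $x_1>0$, continuity of $w$ together with $\int K_T=1$ implies that the inner $s$-integral tends to $w(M(x_1,x_2,x_2^2/x_1))$ as $T\to\infty$.

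To pass the limit through the outer integral I will use dominated convergence: since $w$ vanishes outside $||\y||\le 1$, the integrand is supported in a region of the form $\{0<x_1\le C,\,|x_2|\le C\sqrt{x_1}\}$, on which the factor $1/x_1$ is integrable because $\int_0^C x_1^{-1/2}dx_1<\infty$. After the limit, I substitute $x_1=u_1^2$, $x_2=u_1u_2$ with $u_1>0$: the Jacobian $2u_1^2$ cancels the $1/x_1$, the argument of $w$ becomes $M\u^2$, and the symmetry $(u_1,u_2)\mapsto(-u_1,-u_2)$ lets me extend the $u_1$-integration to all of $\R$. The $\{x_1<0\}$ region is handled by the reflection $\x\mapsto-\x$ (which preserves $J$), producing $w(-M\u^2)$, and the two contributions combine to give $w_+(M\u^2)$. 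Using $\det(M)^2=D^3/\Delta$ from (\ref{MD}) converts $\det(M)/D$ into $D^{1/2}/\Delta^{1/2}$ and yields the first claim. The second claim then follows by applying the rescaling $\x\mapsto B^{1/2}\x$, which turns $\int w_+(B^{-1}M\x^2)\,dx_1dx_2$ into $B\int w_+(M\x^2)\,dx_1dx_2$, together with the definition (\ref{sid}) of $\sigma_\infty(q;w)$.

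The main obstacle will be the dominated-convergence step, since the coordinate change degenerates at $x_1=0$ (the vertex of the cone $J=0$) where $1/x_1$ blows up and the parametrization $\u\mapsto\u^2$ has vanishing Jacobian; the explicit integrable envelope coming from the shape of the support of $w$ is what makes the argument go through.
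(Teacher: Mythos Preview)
Your overall strategy --- pull back by $M$ to reduce to $J$, change variables to isolate the argument of $K_T$, pass to the limit, and then parametrize the cone by $\u\mapsto\u^2$ --- is exactly the paper's, and your handling of the Jacobians, the constant $\det(M)/D=D^{1/2}/\Delta^{1/2}$, the combination of the two half-spaces into $w_+$, and the second claim via the rescaling $\x\mapsto B^{1/2}\x$ is correct.

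The gap is in the dominated-convergence step. Your inner $s$-integral, call it $h_T(x_1,x_2)=\int K_T(s)\,w(M(x_1,x_2,x_2^2/x_1+s/(Dx_1)))\,ds$, satisfies $|h_T|\le\sup|w|$, but its support in $(x_1,x_2)$ is \emph{not} contained in $\{|x_2|\le C\sqrt{x_1}\}$ for finite $T$: the condition $|x_2^2/x_1+s/(Dx_1)|\le C$ for some $|s|\le 1/T$ only gives $x_2^2\le Cx_1+1/(TD)$. Taking the union over all $T\ge T_0$ the support is $\{x_2^2\le Cx_1+1/(T_0D)\}$, and on that set $1/x_1$ is \emph{not} integrable near $x_1=0$ (the $x_2$-integral contributes $\asymp(Cx_1+1/(T_0D))^{1/2}$, so the $x_1$-integrand behaves like $x_1^{-1}$ rather than $x_1^{-1/2}$). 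Hence no $T$-independent integrable majorant exists, and dominated convergence as you phrase it does not apply; you correctly flagged the vertex $x_1=0$ as the obstacle, but the ``explicit integrable envelope'' you propose does not actually cover the support of $h_T$.

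The paper avoids this by not appealing to dominated convergence at all: it replaces $\tilde w(\x)$ by its value at the limiting point on the cone and bounds the \emph{difference} against $K_T$ directly, splitting according to whether that limiting point lies in the support of $\tilde w$; both pieces are shown to be $O_{w,M}(T^{-1/2})$. You can repair your argument the same way, or equivalently by excising $\{0<x_1<\epsilon\}$, using the sharper bound $|h_T|\ll\sup|w|\cdot\min(1,DTx_1)$ (which follows since the $s$-integrand is $\le T$ and is supported on an interval of length $\ll x_1$) to show that strip contributes $O(\sqrt{\epsilon})+O(T^{-1/2})$, and then applying dominated convergence on $\{x_1\ge\epsilon\}$ where your envelope is valid.
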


\begin{proof}
Since
\[\int_{\R^3}w(\y)K_T(q(\y))dy_1dy_2dy_3=
\int_{\R^3}w(-\y)K_T(q(\y))dy_1dy_2dy_3\]
we have
\[\sigma_{\infty}(q;w)=\frac{1}{2}\lim_{T\to\infty}\int_{\R^3}
w_+(\y)K_T(q(\y))dy_1dy_2dy_3.\]
  Then, writing $\tilde{w}(\x)=w_+(D^{-1/2}M\x)$ we have
  \begin{eqnarray}\label{et2}
    \lefteqn{\int_{\R^3}w_+(\y)K_T(q(\y))dy_1dy_2dy_3}\nonumber\\
    &=&
    \det(M)\int_{\R^3}w_+(M\z)K_T(q(M\z))dz_1dz_2dz_3\nonumber\\
    &=&\det(M)\int_{\R^3}w_+(M\z)K_T(DJ(\z))dz_1dz_2dz_3\nonumber\\
    &=&\frac{\det(M)}{D^{3/2}}
    \int_{\R^3}\tilde{w}(\x)K_T(J(\x))dx_1dx_2dx_3\nonumber\\
    &=&\Delta^{-1/2}\int_{\R^3}\tilde{w}(\x)K_T(J(\x))dx_1dx_2dx_3.
  \end{eqnarray}
  The function $\tilde{w}$ will be even, so that the above becomes
\[  \frac{2}{\Delta^{1/2}}\int_0^{\infty}
  \int_{\R^2}\tilde{w}(\x)K_T(J(\x))dx_1dx_2dx_3.\]
 Since $w_+(\y)$ is supported on the set $||\y||\le 1$ we see that
 $\tilde{w}(\z)$ is supported on a subset of $[-C,C]^3$ for some
 $C=C(M,D)>0$.  If we write $x_0=x_2^2/x_3$ and $x_1=x_0+u$, then
 both $\tilde{w}(\x)$ and $\tilde{w}(x_0,x_2,x_3)$ vanish unless
$|x_2|\le C$ and $x_3\le C$. If $\phi$ is the function
\[\phi(u,x_2,x_3)=\tilde{w}(x_0+u,x_2,x_3)-\tilde{w}(x_0,x_2,x_3)\]
it follows that
\begin{eqnarray}\label{cc}
  \lefteqn{\int_0^{\infty}\int_{\R^2}\{\tilde{w}(\x)-\tilde{w}(x_0,x_2,x_3)\}
    K_T(J(\x))dx_1dx_2dx_3}\nonumber\\
&=&\int_0^C\int_{-C}^C\int_{-\infty}^{\infty}\phi(u,x_2,x_3)
K_T(x_3u)dudx_2dx_3\nonumber\\
&\ll& T\int_0^C\int_{-C}^C\int_{|u|\le 1/Tx_3}
|\phi(u,x_2,x_3)|dudx_2dx_3.
\end{eqnarray}
We may assume that $\tilde{w}(x_0+u,x_2,x_3)$ and
$\tilde{w}(x_0,x_2,x_3)$ do not both vanish, and hence that
either $|x_0|\le C$ or $|x_0+u|\le C$, (or both). In this case
\beql{et1}
 \phi(u,x_2,x_3)\ll_{w,M}\min(1,|u|).
 \eeq
 We proceed to consider separately the ranges $|x_0|\ge 2C$ and
 $|x_0|\le 2C$.
 When $|x_0|\ge 2C$ the bound $|x_0+u|\le C$ implies that
 $|u|\ge x_0/2$, and since $|u|\le 1/Tx_3$ we conclude that
 $x_2^2\le 2/T$. The bound $|x_0|\ge 2C$ then shows that
 $x_3\le x_2^2/2C\le 1/CT$. Moreover $u$ is restricted to a range
 $|x_0+u|\le C$ of length $O_M(1)$.  The estimate in (\ref{et1}) is
 $O_{w,M}(1)$, so that
the corresponding contribution to (\ref{cc}) is
\[\ll_{w,m}T\int_0^{1/CT}
\int_{x_2^2\le  2/T}dx_2\,dx_3\ll_{w,M}T^{-1/2}.\]
On the other hand, when $|x_0|\le 2C$ we have $x_2^2\le 2Cx_3$ and
\[\int_{|u|\le 1/Tx_3}\min(1,|u|)du\ll
(Tx_3)^{-1}\min(1\,,\,1/Tx_3),\]
so that the corresponding contribution to (\ref{cc}) is
\[\ll_{w,M}\int_0^C x_3^{-1/2}\min(1\,,\,1/Tx_3)dx_3\ll_{w,M}
T^{-1/2}.\]
We therefore conclude that
  \begin{eqnarray*}
    \lefteqn{\int_{\R^3}w_+(\y)K_T(q(\y))dy_1dy_2dy_3}\\
    &=&\frac{2}{\Delta^{1/2}}\int_0^{\infty}\int_{\R^2}
    \tilde{w}(x_0,x_2,x_3)K_T(J(\x))dx_1dx_2dx_3+O_{w,M}(T^{-1/2}).
  \end{eqnarray*}
  The function $\tilde{w}(x_0,x_2,x_3)$ is independent of $x_1$ and
 \[ \int_{\R}K_T(x_1x_3-x_2^2)dx_1=x_3^{-1}.\]
Moreover with the substitutions $x_3=u_2^2$ and $x_2=u_1u_2$ we have
\begin{eqnarray*}
 \int_0^{\infty}\int_{\R}\tilde{w}(x_2^2/x_3,x_2,x_3)x_3^{-1}dx_2dx_3
&=&2\int_0^{\infty}\int_{\R}\tilde{w}(u_1^2,u_1u_2,u_2^2)du_1\,du_2\\
&=&\int_{\R^2}\tilde{w}(u_1^2,u_1u_2,u_2^2)du_1\,du_2\\
&=&D^{1/2}\int_{\R^2}w_+(M\u^2)du_1\,du_2,
\end{eqnarray*}
and the lemma follows.
\end{proof}

Our next result tells us about the size of
$\sigma_{\infty}(q;w)$. Recall that $q$ is isotropic, and hence
indefinite, with $\Delta>0$, so that the matrix $Q$ of $q$ has one
positive eigenvalue $\lambda$ say, and two negative ones $-\mu$ and
$-\nu$ say.  We may assume that $\mu\ge\nu(>0)$. With this notation
we have $\Delta=\tfrac12\lambda\mu\nu$. We remind the reader of the
notation $f\asymp g$, meaning that both $f\ll g$ and $g\ll f$ hold.
in our context the two implied constants will be absolute. Thus we
will have
\[\max(\lambda,\mu,\nu)\asymp||q||,\]
for example.
\begin{lemma}\label{cqE}
  If $\sup|w(\x)|\le 1$ we have
  \[|\sigma_{\infty}(q;w)|\le 2e^{4/3} \sigma_{\infty}(q;w_0),\]
  where the weight $w_0$ is given by (\ref{w0d}).  Moreover
  \[\sigma_{\infty}(q;w_0)\asymp
  \frac{\min(\lambda,\mu,\nu)^{1/2}}{\Delta^{1/2}}\log(2\mu/\nu)\]
    when $\lambda\ge\mu \ge\nu$,
    \[\sigma_{\infty}(q;w_0)\asymp
  \frac{\min(\lambda,\mu,\nu)^{1/2}}{\Delta^{1/2}}\log(2\lambda/\nu)\]
    when $\mu\ge\lambda \ge\nu$, and
    \[\sigma_{\infty}(q;w_0)\asymp
  \frac{\min(\lambda,\mu,\nu)^{1/2}}{\Delta^{1/2}}\]
  when $\mu\ge\nu \ge\lambda$.

  Thus
  \[\frac{1}{||q||}\ll \sigma_{\infty}(q;w_0)\ll \frac{\rho^{1/4}}{||q||}\]
  in every case.
\end{lemma}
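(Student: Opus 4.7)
The plan has three stages. For the first inequality $|\sigma_\infty(q;w)|\le 2e^{4/3}\sigma_\infty(q;w_0)$, I would derive the Leray representation of $\sigma_\infty$ by the same device used in the proof of Lemma \ref{MT}: since $K_T(t)$ weakly approximates a delta function, one obtains $\sigma_\infty(q;w)=\tfrac12\int_{q=0}w_+\,d\omega$, where $d\omega=dS/|\nabla q|$ is the Leray measure on the cone. On the support of $w$ we have $|w_+|\le 2$, while $w_0(\y)\ge e^{-4/3}$ throughout the ball $||\y||\le 1/\sqrt{2}$. Since $q$ is homogeneous of degree two, $d\omega$ is homogeneous of degree one under $\y\mapsto t\y$, so $\int_{q=0,\,||\y||\le 1}d\omega=\sqrt{2}\int_{q=0,\,||\y||\le 1/\sqrt{2}}d\omega$, and combining these pieces gives $|\sigma_\infty(q;w)|\le\sqrt{2}\,e^{4/3}\sigma_\infty(q;w_0)$, which is stronger than the stated bound.

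For the three asymptotic estimates I would diagonalise $Q$ by an orthogonal change of variables; since $w_0$ is rotation invariant I may assume $2q(\y)=\lambda y_1^2-\mu y_2^2-\nu y_3^2$. Parametrising the cone by $(y_2,y_3)$ with $y_1=\pm\sqrt{(\mu y_2^2+\nu y_3^2)/\lambda}$ yields $d\omega=dy_2\,dy_3/\sqrt{\lambda(\mu y_2^2+\nu y_3^2)}$, and the substitution $(u,v)=\bigl(y_2\sqrt{(\lambda+\mu)/\lambda},\,y_3\sqrt{(\lambda+\nu)/\lambda}\bigr)$ makes $w_0$ depend only on $r=\sqrt{u^2+v^2}$. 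Converting to polar coordinates separates the integral as
\[\sigma_\infty(q;w_0)=\frac{C_0}{\sqrt{(\lambda+\mu)(\lambda+\nu)}}\int_0^{2\pi}\frac{d\theta}{\sqrt{A\cos^2\theta+B\sin^2\theta}},\]
with $A=\mu/(\lambda+\mu)$, $B=\nu/(\lambda+\nu)$, and $C_0=2\int_0^1 w_0(r)\,dr$ an absolute positive constant.

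The technical heart of the proof is the elementary estimate
\[\int_0^{2\pi}\frac{d\theta}{\sqrt{A\cos^2\theta+B\sin^2\theta}}\asymp\frac{\log\bigl(2+\max(A,B)/\min(A,B)\bigr)}{\sqrt{\max(A,B)}}.\]
Without loss of generality $A\ge B$; the substitution $\phi=\pi/2-\theta$ and a split of $[0,\pi/2]$ into the ranges $\phi\le\sqrt{B/A}$, $\sqrt{B/A}\le\phi\le 1$ and $\phi\ge 1$ reduces to the elementary integral $\int dx/\sqrt{Ax^2+B}=A^{-1/2}\sinh^{-1}(x\sqrt{A/B})$, which gives the claim. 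Feeding this back, the three cases $\lambda\ge\mu\ge\nu$, $\mu\ge\lambda\ge\nu$, $\mu\ge\nu\ge\lambda$ correspond to $(A,B)\asymp(\mu/\lambda,\nu/\lambda)$ with $\sqrt{(\lambda+\mu)(\lambda+\nu)}\asymp\lambda$, $(A,B)\asymp(1,\nu/\lambda)$ with $\sqrt{(\lambda+\mu)(\lambda+\nu)}\asymp\sqrt{\lambda\mu}$, and $(A,B)\asymp(1,1)$ with $\sqrt{(\lambda+\mu)(\lambda+\nu)}\asymp\sqrt{\mu\nu}$; inserting $\Delta=\tfrac12\lambda\mu\nu$ converts each into the claimed form.

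Finally, on relabelling the eigenvalues as $e_1\ge e_2\ge e_3>0$, so that $||q||\asymp e_1$ and $\rho\asymp e_1^2/(e_2e_3)$, all three cases collapse into $\sigma_\infty(q;w_0)\asymp\log(2+e_2/e_3)/\sqrt{e_1e_2}$. The lower bound $\sigma_\infty(q;w_0)\gg 1/||q||$ is then immediate from $e_2\le e_1$, while the upper bound $\sigma_\infty(q;w_0)\ll\rho^{1/4}/||q||$ reduces after cancellation to the elementary inequality $\log(2+r)\ll r^{1/4}$ for $r\ge 1$. The only genuinely non-trivial step is the elliptic-integral estimate; everything else is bookkeeping.
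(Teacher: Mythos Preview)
Your approach is correct and genuinely different from the paper's. Two minor numerical slips: on the ball $||\y||\le 1/\sqrt{2}$ one has $1-||\y||^2\ge 1/2$ and hence only $w_0\ge e^{-2}$; the bound $w_0\ge e^{-4/3}$ needs radius $1/2$, which then gives exactly $2e^{4/3}$ rather than the stronger $\sqrt{2}\,e^{4/3}$ you claim. Also, your ``unified formula'' $\sigma_\infty\asymp\log(2+e_2/e_3)/\sqrt{e_1e_2}$ is not quite right in the third case $\mu\ge\nu\ge\lambda$: there $A,B\asymp 1$ so the elliptic integral is $\asymp 1$, yet $e_2/e_3=\nu/\lambda$ can be arbitrarily large. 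Your own case analysis gives the correct $\sigma_\infty\asymp 1/\sqrt{\mu\nu}$ here, and fortunately the final bounds still go through (the erroneous log factor only enlarges the upper estimate). Neither slip is a real gap.

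As to method: the paper never passes to the Leray measure. For the first inequality it uses Lemma~\ref{MT} to rewrite both $\sigma_\infty(q;w)$ and $\sigma_\infty(q;w_0)$ as two-dimensional integrals $\int_{\R^2}w_+(M\x^2)\,dx_1dx_2$ and compares those directly. For the $\asymp$ estimates it keeps the kernel $K_T$, diagonalises by an orthogonal matrix, and then in each of the three eigenvalue orderings rescales one or two coordinates to reduce to a single universal integral $J(T;\delta)=\int_{[-1,1]^3}K_T(z_1^2-z_2^2-\delta z_3^2)\,dz$, which it shows satisfies $J(T;\delta)\asymp\log(2/\delta)$ by a further hands-on splitting. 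Your route---parametrising the cone by $(y_2,y_3)$, making the linear change that renders $w_0$ radial, and recognising the angular integral as (essentially) a complete elliptic integral of the first kind---is shorter and more conceptual; the entire asymptotic then sits in the single standard estimate for $\int_0^{2\pi}(A\cos^2\theta+B\sin^2\theta)^{-1/2}d\theta$. The paper's approach has the mild advantage of never needing to justify interchanging the $T\to\infty$ limit with the Leray representation, but at the cost of a considerably longer computation.
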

\begin{proof}
  According to Lemma \ref{MT} we have
  \begin{eqnarray*}
    |\sigma_{\infty}(q;w)|&=&\frac{\Delta^{1/2}}{D^{1/2}}
    \left|\int_{\R^2}w_+(M\x^2)dx_1dx_2\right|\\
&\le& 2\frac{\Delta^{1/2}}{D^{1/2}}\int_{|M\x^2|\le 1}dx_1dx_2\\
    &\le& 2e^{4/3}\frac{\Delta^{1/2}}{D^{1/2}}\int_{|M\x^2|\le 1}
    \exp\{-1/(1-||M\x||^2/4)\}dx_1dx_2\\
    &\le& 2e^{4/3}
    \frac{\Delta^{1/2}}{D^{1/2}}\int_{\R^2}w_0(\tfrac12 M\x^2)dx_1dx_2\\
 &=& 4e^{4/3}
    \frac{\Delta^{1/2}}{D^{1/2}}\int_{\R^2}w_0(M\x^2)dx_1dx_2.
  \end{eqnarray*}
  On the other hand, applying Lemma \ref{MT} to the weight $w_0$ we
  find that
  \[\sigma_{\infty}(q;w_0)=2\frac{\Delta^{1/2}}{D^{1/2}}
  \int_{\R^2}w_0(M\x^2)dx_1dx_2.\]
  The first claim of the lemma then follows.

  For the remainder of the proof it will be convenient to write
  \[I(T)=\int_{\R^3}w_0(\y)K_T(q(\y))dy_1dy_2dy_3.\]
Now let $U$ be a real orthogonal matrix diagonalising $q(\x)$, so that
$q(U\x)=\mathrm{Diag}(\lambda,-\mu,-\nu)$ say.
Substituting $U\y$ in place of $\y$, and noting that the weight $w_0$
is invariant under rotations, we deduce that
\[I(T)= \int_{\R^3}w_0(\y)K_T(\lambda y_1^2-\mu y_2^3-\nu y_3^2)dy_1dy_2dy_3,\]
whence
\[I(T)\le \int_{[-1,1]^3}K_T(\lambda y_1^2-\mu y_2^3-\nu y_3^2)dy_1dy_2dy_3\]
and
\[I(T)\gg \int_{[-1/2,1/2]^3}K_T(\lambda y_1^2-\mu y_2^3-\nu y_3^2)dy_1dy_2dy_3\]
We now consider three cases. Firstly, suppose that $\lambda\ge \mu$. Then if
$\y\in[-1,1]^3$ and $K_T(\lambda y_1^2-\mu y_2^3-\nu y_3^2)\not=0$ we have
\[\lambda y_1^2\le T^{-1}+\mu y_2^2+\nu y_3^2\le T^{-1}+2\mu.\]
It follows that $|y_1|\le 2\sqrt{\mu/\lambda}$ as soon as
$T\ge (2\mu)^{-1}$.  Writing $\xi=\min(1\,,\,2\sqrt{\mu/\lambda}$ we
then deduce that
\begin{eqnarray*}
  I(T)&\le&
  \int_{[-1,1]^2}\int_{-\xi}^{\xi}
  K_T(\lambda y_1^2-\mu y_2^3-\nu y_3^2)dy_1dy_2dy_3\\
  &=&\sqrt{\mu/\lambda}  \int_{[-1,1]^2}
  \int_{-\xi\sqrt{\lambda/\mu}}^{\xi\sqrt{\lambda/\mu}}
  K_T(\mu y^2-\mu y_2^3-\nu y_3^2)dydy_2dy_3,
\end{eqnarray*}
on substituting $y_1=\sqrt{\mu/\lambda}y$. Since
$\xi\sqrt{\lambda/\mu}\le 2$ we obtain
\begin{eqnarray*}
  I(T)&\le&\sqrt{\mu/\lambda}  \int_{[-2,2]^3}
K_T(\mu y^2-\mu y_2^3-\nu y_3^2)dydy_2dy_3\\
&=&8\sqrt{\mu/\lambda}  \int_{[-1,1]^3}
K_T(4\mu z_1^2-4\mu z_2^3-4\nu z_3^2)dz_1dz_2dz_3\\
&=&8\sqrt{\mu/\lambda}.(4\mu)^{-1}\int_{[-1,1]^3}
K_{4\mu T}(z_1^2-z_2^3-\nu\mu^{-1}z_3^2)dz_1dz_2dz_3,
\end{eqnarray*}
for $T$ sufficiently large.  Similarly when
$\lambda\ge \mu$ we have
\begin{eqnarray*}
  I(T)&\gg &
  \int_{[-1/2,1/2]^2}\int_{-\xi}^{\xi}
  K_T(\lambda y_1^2-\mu y_2^3-\nu y_3^2)dy_1dy_2dy_3\\
  &=&\sqrt{\mu/\lambda}  \int_{[-1/2,1/2]^2}
  \int_{-\xi\sqrt{\lambda/\mu}}^{\xi\sqrt{\lambda/\mu}}
    K_T(\mu y^2-\mu y_2^3-\nu y_3^2)dydy_2dy_3.
\end{eqnarray*}
Since $\xi\sqrt{\lambda/\mu}\ge 1$ we obtain
\begin{eqnarray*}
  I(T)&\gg &  \sqrt{\mu/\lambda} \int_{[-1/2,1/2]^3}
  K_T(\mu y^2-\mu y_2^3-\nu y_3^2)dydy_2dy_3\\
  &=&\tfrac18 \sqrt{\mu/\lambda}  \int_{[-1,1]^3}
K_T(\tfrac14\mu z_1^2-\tfrac14\mu z_2^3-\tfrac14\nu z_3^2)dz_1dz_2dz_3\\
&=&8\sqrt{\mu/\lambda}.(\tfrac14\mu)^{-1}\int_{[-1,1]^3}
K_{\tfrac14\mu T}(z_1^2-z_2^3-\nu\mu^{-1}z_3^2)dz_1dz_2dz_3.
\end{eqnarray*}
Hence if we write
\[J(T;\delta)=\int_{[-1,1]^3}
K_T(z_1^2-z_2^2-\delta z_3^2)dz_1dz_2dz_3\]
we have
\[(\lambda\mu)^{-1/2}J(\tfrac14\mu T;\nu\mu^{-1})\ll I(T)\ll
(\lambda\mu)^{-1/2}J(4\mu T;\nu\mu^{-1}).\]
On taking the $\limsup$ as $T\to\infty$ this yields
\beql{ls1}
\lim_{T\to\infty}I(T)\asymp
(\lambda\mu)^{-1/2}\limsup_{T\to\infty}J(T;\nu\mu^{-1})
\eeq
when $\lambda\ge\mu$.

Suppose next that $\mu\ge\lambda\ge\nu$. Then if
$K_T(\lambda y_1^2-\mu y_2^3-\nu y_3^2)\not=0$  with $\y\in[-1,1]^3$ we have
\[\mu y_2^2\le \mu y_2^2+\nu y_3^2\le T^{-1}+\lambda y_1^2\le T^{-1}+\lambda.\]
It follows that $|y_2|\le 2\sqrt{\lambda/\mu}$ as soon as
$T\ge \lambda^{-1}$.  We may then replace the range $[-1,1]$ for $y_2$
by $[-\xi,\xi]$ where $\xi=\min(1\,,\,2\sqrt{\lambda/\mu})$ this
time. Proceeding much as before we find that
\begin{eqnarray*}
  I(T)&\le&\sqrt{\lambda/\mu}  \int_{[-2,2]^3}
K_T(\lambda y_1^2-\lambda y^3-\nu y_3^2)dy_1dydy_3\\
&=&8\sqrt{\lambda/\mu}.(4\lambda)^{-1}\int_{[-1,1]^3}
K_{4\lambda T}(z_1^2-z_2^3-\nu\lambda^{-1}z_3^2)dz_1dz_2dz_3
\end{eqnarray*}
and
\[I(T)\gg(\lambda\mu)^{-1/2}\int_{[-1,1]^3}
K_{4\lambda T}(z_1^2-z_2^3-\nu\lambda^{-1}z_3^2)dz_1dz_2dz_3.\]
Thus
\beql{ls2}
\lim_{T\to\infty}I(T)\asymp
(\lambda\mu)^{-1/2}\limsup_{T\to\infty}J(T;\nu\lambda^{-1})
\eeq
when $\mu\ge\lambda\ge\nu$.

Thirdly we suppose that $\lambda\le\nu$. In this case if 
$\y\in[-1,1]^3$ and $K_T(\lambda y_1^2-\mu y_2^3-\nu y_3^2)\not=0$ we have 
$|y_2|\le 2\sqrt{\lambda/\mu}$ and
$|y_3|\le 2\sqrt{\lambda/\nu}$ when
$T\ge \lambda^{-1}$.  We then replace the ranges for $y_2$ and $y_3$ by
by $[-\xi_2,\xi_2]$ and $[-\xi_3,\xi_3]$ respectively, where
$\xi_2=\min(1\,,\,2\sqrt{\lambda/\mu})$ and
$\xi_3=\min(1\,,\,2\sqrt{\lambda/\nu})$. A similar argument to before
then shows that
\beql{ls3}
\lim_{T\to\infty}I(T)\asymp
(\mu\nu)^{-1/2}\limsup_{T\to\infty}J(T;1)
\eeq
when $\nu\ge\lambda$.

It remains to consider $J(T;\delta)$, where $0<\delta\le 1$. To obtain
a lower bound we restrict the variables $z_1,z_2$ to the square given by
$|z_1+z_2|\le 1$ and $|z_1-z_2|\le 1$, which lies inside the region
$[-1,1]^2$. A suitable change of variable then shows that
\[J(T;\delta)\ge \frac{1}{2}\int_{[-1,1]^3}
K_T(u_1u_2-\delta z_3^2)du_1du_2dz_3.\]
We now restrict $u_1,u_2$ further, so that $u_1,u_2\ge 0$ and
$\delta/4\le u_1u_2\le\delta/2$. For any such $u_1,u_2$ and any
$T\ge 4\delta^{-1}$ the inequality 
\[|u_1u_2-\delta z^2|\le 1/(2T)\]
implies
\[\delta z^2\le u_1u_2+(2T)^{-1}\le\delta,\]
whence one automatically has $z\in[-1,1]$. Moreover it also implies
\[\delta z^2\ge u_1u_2-(2T)^{-1}\ge\delta/8,\]
whence $|z|\ge \tfrac13$, say.  It follows that one has
\[|u_1u_2-\delta z_3^2|\le 1/(2T)\]
for an admissible set of values for $z_3$ of measure $\gg (\delta
T)^{-1}$. We therefore conclude that
\begin{eqnarray}\label{JTlb}
  J(T;\delta)&\gg &\delta^{-1}\mathrm{Meas}\{(u_1,u_2)\in[0,1]^2:\,
  \delta/4\le u_1u_2\le\delta/2\}\nonumber\\
  &\gg &\delta^{-1}\int_{2/\delta}^1\frac{\delta}{4u_1}du_1\nonumber\\
  &\gg &\log(2/\delta).
\end{eqnarray}

To obtain an upper bound for $J(T;\delta)$ we extend the range of
$z_1,z_2$ to the square given by
$|z_1+z_2|\le 2$ and $|z_1-z_2|\le 2$. A suitable change of variable
now shows that
\[J(T;\delta)\le \frac{1}{2}\int_{[-2,2]^3}
K_T(u_1u_2-\delta z_3^2)du_1du_2dz_3.\]
When $|u_1u_2|\le 2/T$ the integrand is only non-zero when
$z_3^2\le 3/(\delta T)$, so that this range contributes
\[\ll T\int_{-2}^2\min\left(1\,,\,\frac{1}{|u_2|T}\right)(\delta T)^{-1}du_2
\ll\delta^{-1/2}\frac{\log T}{\sqrt{T}}.\]
The range $|u_1u_2|\le 2/T$ therefore makes no contribution when we
let $T$ go to infinity. When $|u_1u_2|\ge 2/T$ the integrand is only
non-zero when $\sqrt{|u_1u_2|/(2\delta)}\le|z_3|\le 2$.  Thus the set of values
for $z_3$ for which $|u_1u_2-\delta z_3^2|\le T^{-1}$ consists of at
most two intervals, having total length
$O(T^{-1}(|u_1u_2|\delta)^{-1/2})$. Moreover the set is empty unless
$|u_1u_2|\le 8\delta$.It follows that the corresponding contribution
to $J(T;\delta)$ is
\begin{eqnarray*}
  &\ll& T\int_{-2}^2\int_{|u_1|\le\min(2\,,\,8\delta/|u_2|)}
  T^{-1}(|u_1u_2|\delta)^{-1/2}du_1du_2\\
  &\ll&\delta^{-1/2}\int_{-2}^2
  \frac{\min(2\,,\,8\delta/|u_2|)^{1/2}}{|u_2|^{1/2}}du_2\\
  &\ll&\log(2/\delta).
\end{eqnarray*}
In view of the lower bound (\ref{JTlb}) we therefore have
$J(T;\delta)\asymp \log(2/\delta)$, and the second claim of Lemma
\ref{cqE} then follows from (\ref{ls1}), (\ref{ls2}) and (\ref{ls3}).

For the third claim of the lemma we note that
\[\min(\lambda,\mu,\nu)\ge\frac{\lambda\mu\nu)}{\max(\lambda,\mu,\nu)}
\gg\frac{\Delta}{||q||^2}.\]
This produces the lower bound for $\sigma_{\infty}(q,w_0)$.  For the
upper bound we observe for example that when
$\lambda\ge\mu \ge\nu$ we have
\[\min(\lambda,\mu,\nu)^{1/2}\log(2\mu/\nu)=\nu^{1/2}\log(2\mu/\nu)
\ll\nu^{1/2}(\mu\nu)^{1/4}\]
and that
\[(\mu\nu)^{1/4}\ll\Delta^{1/4}\lambda^{-1/4}\ll\Delta^{1/4}||q||^{-1/4}.\]
Thus when $\lambda\ge\mu \ge\nu$ we have
\[\sigma_{\infty}(q;w_0)\ll\Delta^{-1/4}||q||^{-1/4}=\rho^{1/4}||q||^{-1}.\]
When $\mu\ge\lambda\nu$ or $\mu\ge\nu\ge\lambda$ we may argue similarly.
This suffices to complete the proof of the lemma.
\end{proof}

\section{Theorem \ref{t4} --- The error term}

In Lemma \ref{poisson}
the contribution from $\bfa\not=\mathbf{0}$ will produce an error
term, as we now show.
\begin{lemma}\label{l:err}
  Let $\Lambda\subseteq\Z^2$ be a 2-dimensional lattice, and let
  $\b{n}_1,\b{n}_2$ be a basis for $\Lambda$ chosen so that
  $||\b{n}_1||\cdot||\b{n}_2||\ll\dL(\Lambda)$. Then if
  $N=(\b{n}_1|\b{n}_2)$ we have $\Lambda=N(\Z^2)$. Moreover for any
  integer $K\ge 0$ we have
  \[I(\bfa,M_k,N)\ll_{w,K}BD_k^{-1/2}||q||^{1/2}
  \left(\frac{B_0}{B}\right)^{K/2}||\bfa||^{-K},\]
  with
  \[B_0=\rho\,\dL(\Lambda)^2||\bfc_3||.\]
  \end{lemma}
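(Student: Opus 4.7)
The plan is to estimate $I(\bfa,M_k,N)$ by $K$-fold integration by parts, exploiting the oscillation of $e(-\bfa^T N^{-T}\x)$. First substitute $\x=D\y$ with $D=\mathrm{diag}(X_1,X_2)$; by Lemma \ref{IE} the integrand is then supported in the unit square, and
\[
I(\bfa,M_k,N) \;=\; X_1 X_2\int_{[-1,1]^2} F(\y)\,e(\b{c}\cdot\y)\,dy_1\,dy_2,
\]
where $F(\y)=w_+(B^{-1}M_k\Psi(\y))$, $\Psi(\y)=(X_1^2 y_1^2,\,X_1X_2 y_1y_2,\,X_2^2 y_2^2)^T$, and $\b{c}=-DN^{-1}\bfa$. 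The case $K=0$ is immediate from $X_1 X_2\le 3BD_k^{-1/2}\|q\|^{1/2}$.

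For $K\ge 1$, I would estimate the derivatives of $F$ on the unit square via the chain rule. Writing $H(\y)=B^{-1}M_k\Psi(\y)$ and applying Fa\`a di Bruno to $F=w_+\circ H$, the fact that $\Psi$ is a quadratic polynomial means that only first and second derivatives of $H$ contribute. The identities $X_1^2=B\|\b{r}_1\|$, $X_2^2=B\|\b{r}_3\|$, the column bounds of Theorem \ref{t2}, and the relation $\det(M_k)^2=D_k^3/\Delta$ coming from (\ref{MD}) together imply $\|\b{r}_i\|\cdot\|\bfc_i\|\ll\rho$ (for $i=1,3$, which amounts to $1/\cos\phi_i$ being bounded by a multiple of $\rho$, where $\phi_i$ is the angle between $\bfc_i$ and the normal to the plane of the other two columns) and $\sqrt{\|\b{r}_1\|\,\|\b{r}_3\|}\cdot\|\bfc_2\|\ll\sqrt{\rho}$. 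From these one deduces $|\partial_\y^\alpha H|\ll\rho$ for $|\alpha|\le 2$ on the support, and hence $|\partial_\y^\alpha F|\ll_K\rho^K$ for $|\alpha|=K$. Integrating by parts $K$ times in the coordinate direction that maximizes $|c_i|$ then yields
\[
|I(\bfa,M_k,N)|\ll_{w,K} X_1 X_2\,\rho^K |\b{c}|^{-K}.
\]

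It remains to convert this into the claimed bound, which reduces to showing $|\b{c}|^2\gg\rho B|\bfa|^2/(\dL(\Lambda)^2\|\bfc_3\|)$. Writing $\b{b}=N^{-1}\bfa$, so that $\bfa=b_1\b{n}_1+b_2\b{n}_2$, we have $|\b{c}|^2=B(\|\b{r}_1\|b_1^2+\|\b{r}_3\|b_2^2)$. Cauchy--Schwarz applied to $\bfa=b_1\b{n}_1+b_2\b{n}_2$ gives
\[
|\bfa|^2\le\bigl(\|\b{n}_1\|^2/\|\b{r}_1\|+\|\b{n}_2\|^2/\|\b{r}_3\|\bigr)\bigl(\|\b{r}_1\|b_1^2+\|\b{r}_3\|b_2^2\bigr),
\]
and this combines with $\|\b{r}_i\|\ge 1/\|\bfc_i\|$ (from $\b{r}_i\cdot\bfc_i=1$), $\|\bfc_1\|\le\|\bfc_3\|$, and the hypothesis $\|\b{n}_1\|\cdot\|\b{n}_2\|\ll\dL(\Lambda)$ to produce the required lower bound. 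The main obstacle lies in this final step, where a naive Cauchy--Schwarz loses a polynomial factor; the sharper route is to perform the integration by parts separately in each of the coordinate directions $y_1$ and $y_2$, and observe via a pigeonhole argument on the pair $(b_1,b_2)$ that whichever direction carries a proportional share of the oscillation automatically supplies the requisite decay, with the ordering $\|\bfc_1\|\le\|\bfc_3\|$ ensuring that $\|\bfc_3\|$ is the correct quantity to appear in $B_0$.
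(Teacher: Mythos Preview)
Your overall architecture is sound, but the argument as written does not close because of a quantitative slip in the derivative estimate. You assert $\|\b{r}_i\|\cdot\|\bfc_i\|\ll\rho$ for $i=1,3$, and this is what forces the bound $|\partial_\y^\alpha F|\ll_K\rho^{K}$ and creates the ``obstacle'' in the Cauchy--Schwarz step. In fact the stronger bound $\|\b{r}_i\|\cdot\|\bfc_i\|\ll\rho^{1/2}$ holds: from (\ref{c1E})--(\ref{c23E}) one has $\|\bfc_1\|\le\det(M_k)\|\b{r}_2\|\,\|\b{r}_3\|$ and $\|\bfc_3\|\le\det(M_k)\|\b{r}_1\|\,\|\b{r}_2\|$, so by (\ref{t2crE}) and (\ref{MD}),
\[
\|\b{r}_i\|\cdot\|\bfc_i\|\;\le\;\det(M_k)\,\|\b{r}_1\|\,\|\b{r}_2\|\,\|\b{r}_3\|\;\ll\;\det(M_k)\,D_k^{-3/2}\|q\|^{3/2}\;=\;\rho^{1/2}.
\]
Together with your (correct) estimate $\sqrt{\|\b{r}_1\|\,\|\b{r}_3\|}\cdot\|\bfc_2\|\ll\rho^{1/2}$ this gives $|\partial_\y^\alpha H|\ll\rho^{1/2}$ for $|\alpha|\le 2$, and then Fa\`a di Bruno (remembering that $H$ is quadratic, so at most $K$ factors appear) yields $|\partial_\y^\alpha F|\ll_{w,K}\rho^{K/2}$. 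With this corrected exponent, the inequality you need reduces to $|\b{c}|^2\gg B\,\|\bfa\|^2/(\dL(\Lambda)^2\|\bfc_3\|)$, which your Cauchy--Schwarz argument already gives (using $\Lambda\subseteq\Z^2$ so that $\|\b{n}_i\|\le\|\b{n}_1\|\,\|\b{n}_2\|\ll\dL(\Lambda)$). No further ``sharper route'' is required.

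For comparison, the paper's own proof is essentially the directional argument you allude to at the end. It does \emph{not} rescale: it sets $\b{b}=N^{-1}\bfa$, chooses the index $i$ with $|b_i|$ larger, and integrates by parts $K$ times in the original variable $x_i$. The derivative bound is then organised as $F_1^rF_2^s$ with $r+2s=K$, where $F_1,F_2$ are the sup-norms of the first and second $x_i$-derivatives of the components of $B^{-1}M_k\x^2$ on the support; this leads to $B^{-K/2}E^{K/2}$ with $E=\|\b{r}_1\|\,\|\bfc_1\|^2+\|\b{r}_3\|\,\|\bfc_2\|^2+\|\bfc_1\|$, and one shows $E\ll\rho\,\|\bfc_1\|\le\rho\,\|\bfc_3\|$. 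The conversion $|b_i|^{-K}\to\|\bfa\|^{-K}$ is via $\|\bfa\|\ll\dL(\Lambda)\max_i|b_i|$, which replaces your Cauchy--Schwarz step. Either approach works once the $\rho^{1/2}$ bound is in hand.
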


\begin{proof}
  Let $N^{-1}\bfa=(b_1,b_2)^T$ and suppose that $|b_1|\ge|b_2|$, say.  If we
  integrate by parts $K$ times with respect to $x_1$ we find that
\[I(\bfa,M_k,N)\ll_K|b_1|^{-K}\int_{\R^2}\left|\frac{\partial^K}{\partial x_1^K}
w(B^{-1}M_k\x^2)\right|dx_1dx_2.\]
Since $\bfa=N(b_1,b_2)^T$ we have
\[||\bfa||\ll\max(||\b{n}_1||,||\b{n}_2||)\max(|b_1|,|b_2|)\ll
\dL(\Lambda)|b_1|,\]
whence
\[I(\bfa,M_k,N)\ll_K ||\bfa||^{-K}\dL(\Lambda)^K
\int_{\R^2}\left|\frac{\partial^K}{\partial x_1^K}
w(B^{-1}M_k\x^2)\right|dx_1dx_2.\]
  We will write the components of $B^{-1}M_k\x^2$ as
  $f_1(x_1),f_2(x_1),f_3(x_1)$, where the $f_i$ are quadratic
  polynomials which also involve $x_2$.  Then the $K$-th order partial
  derivatives of $w(B^{-1}M_k\x^2)$ with respect to $x_1$ will be sums
  of various terms $T_n$.  Each $T_n$ will be a product containing a
  single partial derivative of $w$, of order at most $K$, along with
  various first and second derivatives of the $f_i$.  If there are $r$
  first derivatives and $s$ second derivatives then $r+2s=K$. It
  therefore follows that
  \[\frac{\partial^K}{\partial x_1^K}w(B^{-1}M_k\x^2)\ll_{w,K} F_1^rF_2^s\]
  for some exponents with $r+2s=K$, where
  \[F_1=\sup\{|f_i'(x_1)|:\, w(B^{-1}M_k\x^2)\not=0\},\]
and $F_2$ is the maximum of $|f_1''|,|f_2''|$ and $|f_3''|$.

The leading coefficient of $Bf_i$ will be the $i$-th entry in the first
column of $M_k$, so that its modulus will be at most $||\b{c}_1||$, in
the notation of Theorem \ref{t2}.  It follows that
$F_2\le 2||\b{c}_1||/B$.  Similarly the coefficient of $x_1$ in $Bf_i$ will have
modulus at most $||\b{c}_2||$, so that
\[F_1\le X_1||\b{c}_1||/B+X_2||\b{c}_2||/B.\]
It follows via Lemma \ref{IE} that
\begin{eqnarray*}
  \max_{r+2s=K}F_1^rF_2^s&\le& F_1^K+F_2^{K/2}\\
  &\ll_K& (X_1||\b{c}_1||/B)^K+(X_2||\b{c}_2||/B)^K+(||\b{c}_1||/B)^{K/2}\\
&\ll_K &B^{-K/2}\{||\b{r}_1||^{K/2}||\b{c}_1||^K+||\b{r}_3||^{K/2}||\b{c}_2||^K
  +||\b{c}_1||^{K/2}\}\\
  &\ll_K &  B^{-K/2}E^{K/2},
\end{eqnarray*}
with
\[E=||\b{r}_1||\cdot||\b{c}_1||^2+||\b{r}_3||\cdot||\b{c}_2||^2+||\b{c}_1||.\]
Lemma \ref{IE} shows that the support of $w(B^{-1}M\x^2)$ is included
in a rectangle of area
$O(BD_k^{-1/2}||q||^{1/2})$, and we therefore conclude that
\beql{Ie1}
I(\bfa,M_k,N)\ll_{w,K}\frac{B||q||^{1/2}}{D_k^{1/2}}
\left\{\dL(\Lambda)^2B^{-1}E\right\}^{K/2}||\bfa||^{-K}.
\eeq

We now claim that
\beql{cE}
E\ll \rho\,||\b{c}_1||\ll \rho\,||\bfc_3||
\eeq
when $|b_1|\ge|b_2|$, as we are currently supposing.  In the
alternative case the argument is completely analogous, leading to
exactly the same bound.  To establish our claim we
use (\ref{c1E}), (\ref{t2crE}) and (\ref{MD}) to show that
\begin{eqnarray*}
  ||\b{r}_1||\cdot||\b{c}_1||^2&\le&
\det(M_k)||\b{r}_1||\cdot||\b{r}_2||\cdot||\b{r_3}||\cdot||\b{c}_1||\\
  &\le&9\sqrt{10}\det(M_k)||q||^{3/2}D_k^{-3/2}||\b{c}_1||\\
  &\ll& \rho^{1/2}||\b{c}_1||.
\end{eqnarray*}
This is sufficient for the term $||\b{r}_1||\cdot||\b{c}_1||^2$, since
$\rho\ge 2$ by Theorem \ref{t5}.  Secondly,
the bounds (\ref{alp}), (\ref{t2crE}),
(\ref{t22}) and (\ref{MD}) yield
\begin{eqnarray*}
  ||\b{r}_3||\cdot||\b{c}_2||^2
  &\le&
  ||\b{r}_1||\cdot||\b{c}_1||\cdot||\b{r}_3||\cdot||\b{c}_2||^2\\
&\ll & D_k^{-1}||q||\cdot||\b{c}_1||\cdot||\b{c}_2||^2\\
&\ll & \det(M_k)^2D_k^{-3}||q||^3||\b{c}_1||\\
 &\ll& \rho||\b{c}_1||,
\end{eqnarray*}
  which is sufficient for the term $||\b{r}_3||\cdot||\b{c}_2||^2$.
  Finally,  since $\rho\ge 2$ we have
  $||\b{c}_1||\le\rho||\b{c}_1||$.  This give us
  the required estimate (\ref{cE})
for $E$, whence (\ref{Ie1}) produces the bound
\[I(\bfa,M_k,N)\ll_{w,K}||\bfa||^{-K}\frac{B||q||^{1/2}}{D_k^{1/2}}
\left\{\frac{\dL(\Lambda)^2}{B}\rho\,||\bfc_3||\right\}^{K/2}.\]
The lemma now follows.
\end{proof}

We can now summarize the results of our analysis of
$S(\Lambda,B,M_k)$. 
\begin{lemma}\label{prop}
  We have $S(\Lambda,B,M_k)=0$ if $B<1$, and otherwise
  \[  S(\Lambda,B,M_k)\ll_w D_k^{-1/2}||q||^{1/2}\left\{\frac{B}{\dL(\Lambda)}+
  B^{1/2}||\b{c}_3||^{1/2}\right\}.\]

  Moreover
  \begin{eqnarray}\label{as}
  S(\Lambda,B,M_k)&=&
  \sigma_{\infty}(q;w)\frac{\Delta^{1/2}}{D_k^{1/2}\dL(\Lambda)}B+
O_w(1)\nonumber\\
&&\hspace{1cm}{}+O_{w,K}(D_k^{-1/2}||q||^{1/2}B(B_0/B)^K),
  \end{eqnarray}
for any integer $K\ge 2$,  with
  \[B_0=\rho\,\dL(\Lambda)^2||\bfc_3||.\]
\end{lemma}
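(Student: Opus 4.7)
The first assertion, with both the vanishing for $B<1$ and the stated crude upper bound, is precisely Lemma \ref{CB}, so nothing additional is needed for it.

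For the asymptotic identity my plan is to apply Poisson summation (Lemma \ref{poisson}) after choosing a suitable basis of $\Lambda$. Concretely, pick a basis $\b{n}_1,\b{n}_2$ of $\Lambda$ with $||\b{n}_1||\cdot||\b{n}_2||\ll\dL(\Lambda)$, which exists for any two-dimensional lattice by standard reduction theory, and set $N=(\b{n}_1|\b{n}_2)$, so $\Lambda=N(\Z^2)$. Lemma \ref{poisson} then gives
\[
S(\Lambda,B,M_k) \;=\; -w_+(\mathbf{0}) \;+\; \frac{1}{\dL(\Lambda)}\,I(\mathbf{0},M_k,N) \;+\; \frac{1}{\dL(\Lambda)}\sum_{\bfa\in\Z^2-\{\mathbf{0}\}} I(\bfa,M_k,N).
\]
The term $-w_+(\mathbf{0})$ is absorbed into $O_w(1)$, and Lemma \ref{MT} evaluates the zeroth Fourier coefficient:
\[
I(\mathbf{0},M_k,N) \;=\; \int_{\R^2} w_+(B^{-1}M_k\x^2)\,dx_1\,dx_2 \;=\; \sigma_{\infty}(q;w)\,\frac{\Delta^{1/2}}{D_k^{1/2}}\,B,
\]
whose contribution, after dividing by $\dL(\Lambda)$, matches the main term in (\ref{as}) exactly.

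For the tail $\bfa\neq\mathbf{0}$ I would apply Lemma \ref{l:err} with the exponent inflated from $K$ to $2K$. Since the hypothesis $K\ge 2$ guarantees $2K\ge 4>2$, the decay $||\bfa||^{-2K}$ produces an absolutely convergent sum over $\Z^2-\{\mathbf{0}\}$, while the factor of $B_0/B$ is raised to the exactly advertised power $K$. Combining these estimates and using $\dL(\Lambda)\ge 1$, the entire tail contributes $O_{w,K}(D_k^{-1/2}||q||^{1/2}B\,(B_0/B)^K)$, which is the second error term in (\ref{as}).

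All of the genuine analytic and geometric work — the oscillatory-integral differentiation argument, the Poisson identity, and the identification of the real density $\sigma_{\infty}(q;w)$ — has already been completed in Lemmas \ref{poisson}, \ref{MT} and \ref{l:err}, so this lemma is essentially an assembly step. The only delicate point worth flagging is that one cannot apply Lemma \ref{l:err} with the exponent $K$ itself and sum: since $\sum_{\bfa\neq\mathbf{0}}||\bfa||^{-K}$ diverges for $K\le 2$ in two dimensions, the borderline case $K=2$ admitted by the statement forces the slightly wasteful choice of exponent $2K$, which nevertheless yields precisely the clean $(B_0/B)^K$ bound recorded in (\ref{as}).
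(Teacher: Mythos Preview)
Your proof is correct and follows the paper's own argument essentially verbatim: the first part is Lemma~\ref{CB}, and the second combines Lemma~\ref{poisson} with Lemma~\ref{MT} for $\bfa=\mathbf{0}$ and Lemma~\ref{l:err} (applied with exponent $2K$) for $\bfa\neq\mathbf{0}$, exactly as the paper does. Your explicit remark about why the exponent must be doubled at the borderline $K=2$ is a welcome clarification of a point the paper leaves implicit.
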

\begin{proof}
  The first half is the content of Lemma \ref{CB}, while the second
  follows from Lemma \ref{poisson}, together with Lemma \ref{MT} for
  the term $\bfa=\mathbf{0}$, and Lemma \ref{l:err} for
  $\bfa\not=\mathbf{0}$.  Here we replace $K$ by $2K$ and observe that
  \[\twosum{\bfa\in\Z^2}{\bfa\not=0}||\bfa||^{-2K}\ll_K 1\]
  for $K\ge 2$.
\end{proof}

\section{Completing the proof of Theorem \ref{t4}}

To prove Theorem \ref{t4} we will apply Lemma \ref{lSlat}, using the 
crude upper bound from Lemma \ref{prop} when $\dL(\Lambda^{(j)})$ or
$d$  is large, and the asymptotic estimate (\ref{as}) otherwise. We therefore
begin by choosing a real parameter $d_0\ge 1$, which we will specify
later, and noting that we can restrict attention to the range
$d\le\sqrt{B}$, by virtue of the first clause of Lemma~\ref{prop}.

The contribution to $N(B;\cl{C}_k)$ from terms with $d_0\le d\le\sqrt{B}$,
summed over all the lattices $\Lambda^{(j)}$, will be
\begin{eqnarray*}
&\ll_w& 3^{\omega(\Delta)}\frac{||q||^{1/2}}{D_k^{1/2}}
\sum_{d_0\le d\le\sqrt{B}}\left\{Bd^{-2}+
B^{1/2}d^{-1}||\b{c}_3||^{1/2}\right\}\\
&\ll_w&
3^{\omega(\Delta)}\frac{||q||^{1/2}}{D_k^{1/2}}\left\{Bd_0^{-1}+
B^{1/2}(\log B)||\b{c}_3||^{1/2}\right\}.
\end{eqnarray*}
Similarly, the contribution from terms with $\dL(\Lambda^{(j)})\ge d_0$ 
and $d\le d_0$ will be
\begin{eqnarray*}
&\ll_w&\frac{||q||^{1/2}}{D_k^{1/2}}
  \sum_{\substack{j=1\\ \dL(\Lambda^{(j)})\ge  d_0}}^J
\sum_{d\le d_0}\left\{\frac{Bd^{-2}}{\dL(\Lambda^{(j)})}+
B^{1/2}d^{-1}||\b{c}_3||^{1/2}\right\}\\
&\ll_w&
3^{\omega(\Delta)}\frac{||q||^{1/2}}{D_k^{1/2}}\left\{Bd_0^{-1}+
B^{1/2}(\log B)||\b{c}_3||^{1/2}\right\}.
\end{eqnarray*}

We now examine the terms for which both $d\le d_0$ and
$\dL(\Lambda^{(j)})\le d_0$.  We have
\[\sum_{\substack{j=1\\ \dL(\Lambda^{(j)})\ge d_0}}^J\frac{1}{\dL(\Lambda^{(j)})}\,
\sum_{d=1}^{\infty}\frac{1}{d^2}\ll 3^{\omega(\Delta)}d_0^{-1},\]
and
\[\sum_{j=1}^J\frac{1}{\dL(\Lambda^{(j)})}\,\sum_{d\ge d_0} \frac{1}{d^2}
\ll 3^{\omega(\Delta)}d_0^{-1}.\]
Hence, when we use the asymptotic formula (\ref{as}) for terms in which
both $d$ and $\dL(\Lambda^{(j)})$ are at most $d_0$, the main term contributes
\[\tfrac12\sigma_{\infty}(q;w)\frac{\Delta^{1/2}}{D_k^{1/2}}B
\left\{\kappa+O(3^{\omega(\Delta)}d_0^{-1})\right\},\]
where
\[\kappa=\sum_{j=1}^J\lambda\left(\dL(\Lambda^{(j)})\right)
\sum_{\substack{d=1\\ (d,\Delta_1\Delta_2)=1}}^{\infty}\mu(d)d^{-2}=
\frac{6}{\pi^2}\prod_{p\mid \Delta_1}\frac{1}{1+p^{-1}}
\prod_{p\mid \Delta_2}\frac{1-p^{-1}}{1+p^{-1}},\]
by Lemma \ref{lSlat}.
Using the estimate from Lemma \ref{cqE} we see that the $O$-term above
contributes
\[\ll_w 3^{\omega(\Delta)}\frac{\rho^{1/4}}{||q||}
\frac{\Delta^{1/2}}{D_k^{1/2}}\frac{B}{d_0}=
3^{\omega(\Delta)}\rho^{-1/4}\frac{||q||^{1/2}}{D_k^{1/2}}\frac{B}{d_0}
\le 3^{\omega(\Delta)}\frac{||q||^{1/2}}{D_k^{1/2}}\frac{B}{d_0}.\]

On the other hand, the error term $O_w(1)$ in (\ref{as}) contributes
$O_w(d_0^2)$ while the second error term contributes
\begin{eqnarray*}
&\ll_{w,K}&\frac{||q||^{1/2}}{D_k^{1/2}}B(B_1/B)^K
\sum_{\substack{j=1\\ \dL(\Lambda^{(j)})\le d_0}}^J
\sum_{d\le d_0}\dL(\Lambda^{(j)})^{2K}d^{2K-2}\\
&\ll_{w,K}&\frac{||q||^{1/2}}{D_k^{1/2}}B(B_1/B)^K3^{\omega(\Delta)}d_0^{4K-1},
\end{eqnarray*}
with
\[B_1=\rho\,||\bfc_3||.\]
Thus, if we assume that $B\ge B_1$ we may take $d_0=B^{1/4}B_1^{-1/4}$
and $K=2$ so that the total of all the above error terms is
\[\ll_w d_0^2+3^{\omega(\Delta)}\frac{||q||^{1/2}}{D_k^{1/2}}
  \left\{\frac{B}{d_0}+B^{1/2}(\log B)||\b{c}_3||^{1/2}\right\}.\]
  In the notation of Theorem \ref{t5} we have
\beql{bc3b}
||\b{c}_3||^2\ge||\z_1||\cdot||\z_2||\ge D_k/||q||,
\eeq
whence
\[d_0^2=\frac{B^{1/2}}{B_1^{1/2}}\ll \frac{B^{1/2}}{||\b{c}_3||^{1/2}}
\ll \frac{||q||^{1/2}}{D_k^{1/2}}B^{1/2}||\b{c}_3||^{1/2}.\]
Thus the error term $d_0^2$ above is dominated by the final term.
Moreover 
\[\frac{B}{d_0}=B^{3/4}B_1^{1/4}\gg
  B^{1/2}\max\{||\b{c}_1||\,,\,||\b{c}_3||\}^{1/2}\]
  when $B\ge B_1$.  We therefore deduce that
  \[  N(B;\cl{C}_k)=\tfrac12
  \sigma_{\infty}(q,w)\kappa\frac{\Delta^{1/2}}{D_k^{1/2}}B+
  O_w\left(3^{\omega(\Delta)}\frac{||q||^{1/2}}{D_k^{1/2}}
  B_1^{1/4}B^{3/4}\log B\right)\]
  for $B\ge B_1$.

  When $\rho^{-1}||\b{c}_3||\ll B\le B_1$ we argue as above with
  $d_0=1$, showing that
  \[  N(B;\cl{C}_k)\ll_w 1+3^{\omega(\Delta)}\frac{||q||^{1/2}}{D_k^{1/2}}
  \left\{B+B^{1/2}(\log B)||\b{c}_3||^{1/2}\right\}.\]
  However for $B\le B_1$ we have
  \[\sigma_{\infty}(q,w)\kappa\frac{\Delta^{1/2}}{D_k^{1/2}} B\ll
  3^{\omega(\Delta)}\frac{||q||^{1/2}}{D_k^{1/2}}B_1^{1/4}B^{3/4}\]
by Lemma \ref{cqE}, and $B\ll B_1^{1/4}B^{3/4}$. Moreover
$B^{1/2}||\b{c}_3||^{1/2}\ll B_1^{1/4}B^{3/4}$ when $B\gg \rho^{-1}||\b{c}_3||$.
It follows that
\begin{eqnarray}\label{NBE}
N(B;\cl{C}_k)&=&\tfrac12
  \sigma_{\infty}(q,w)\kappa\frac{\Delta^{1/2}}{D_k^{1/2}}B+O_w(1)\nonumber\\
  &&\hspace{5mm}{}+
  O_w\left(3^{\omega(\Delta)}\frac{||q||^{1/2}}{D_k^{1/2}}
  B_1^{1/4}B^{3/4}\log B\right)
  \end{eqnarray}
in the range $\rho^{-1}||\b{c}_3||\ll B\le B_1$.
On the other hand, if we have $B< (90\rho)^{-1}||\b{c}_3||$ then $||\x||\le B$
implies $||\x||<||\b{z}_2||$, in the notation of Theorem \ref{t5}, so
that $N(B;\cl{C}_k)$ counts at most the points $\pm\b{z}_1$.  In this
case we will have $N(B;\cl{C}_k)\ll_w 1$, and hence (\ref{NBE}) holds
for $B< (90\rho)^{-1}||\b{c}_3||$ too. To complete the proof of
Theorem \ref{t4} it remains to observe that $\kappa\gg
(3/4)^{\omega(\Delta)}$, that $\sigma_{\infty}(q,w)\gg ||q||^{-1}$, by
Lemma \ref{cqE}, and that $||\b{c}_3||\ll\rho\,||\b{z}_2||$, by
Theorem\ref{t5}.

\bigskip

\bigskip


\begin{thebibliography}{99}

\bibitem{Cas}J.W.S. Cassels,
Bounds for the least solutions of homogeneous quadratic equations,
{\em Proc. Cambridge Philos. Soc.}, 51 (1955), 262--264. 
\bibitem{Dav} H. Davenport, Homogeneous quadratic equations,
  {\em Mathematika}, 18 (1971), 1--4.
  \bibitem{circle}D.R. Heath-Brown, A new form of the circle method,
    and its application to quadratic forms, {\em J. reine
      angew. Math.}, 481 (1996), 149--206.
      \bibitem{Peyre}E. Peyre, Hauteurs et mesures de Tamagawa sur les 
      vari\'{e}t\'{e}s de Fano, {\em Duke Math. J.}, 79 (1995), 101--218.
      \bibitem{sofos}E. Sofos, Uniformly counting rational points on conics,
      {\em Acta Arith.}, 166 (2014), 1--14.


\end{thebibliography}
\end{document}